\newcommand{\vsp}{\vskip 0.2in}
\newlength{\whitecirclewidth}
\newlength{\whitecircleheight}
\newlength{\blackcirclewidth}
\newlength{\blackcircleheight}
\newlength{\corssandcircleheight}
\newlength{\corssandcirclewidth}
\newlength{\nodesize}
\newcommand{\whitecirclesymbol}{\makebox[\whitecirclewidth]{\Large$\circ$}}
\newcommand{\blackcirclesymbol}{\makebox[\blackcirclewidth]{\Large$\bullet$}}
\newcommand{\crossandcirclesymbol}{\makebox[\corssandcirclewidth]{\large$\otimes$}}
\newcommand{\wnode}{*-<\nodesize>{\whitecirclesymbol}}
\newcommand{\single}{\ar@{-}}
\newcommand{\rdouble}{\ar@2{->}}
\newcommand{\ldouble}{\ar@2{<-}}
\newcommand{\rtriple}{\ar@3{->}}
\newcommand{\ltriple}{\ar@3{<-}}
\newcommand{\abovewnode}[1]%
{
*-<\nodesize>{\raisebox{0pt}[\whitecircleheight][0pt]{$\overset{\rlap{$\displaystyle
#1$}}{\whitecirclesymbol}$}}}
\newcommand{\belowwnode}[1]%
{
*-<\nodesize>{\raisebox{1pt}[\whitecircleheight][0pt]{$\underset{\rlap{$\displaystyle
#1$}}{\whitecirclesymbol}$}}}
\newcommand{\abovebnode}[1]%
{
*-<\nodesize>{\raisebox{0pt}[\whitecircleheight][0pt]{$\overset{\rlap{$\displaystyle
#1$}}{\blackcirclesymbol}$}}}
\newcommand{\belowbnode}[1]%
{
*-<\nodesize>{\raisebox{0pt}[\whitecircleheight][0pt]{$\underset{\rlap{$\displaystyle
#1$}}{\blackcirclesymbol}$}}}
\newcommand{\abovecnode}[1]%
{
*-<\nodesize>{\raisebox{0pt}[\whitecircleheight][0pt]{$\overset{\rlap{$\displaystyle
#1$}}{\crossandcirclesymbol}$}}}
\newcommand{\belowcnode}[1]%
{
*-<\nodesize>{\raisebox{1pt}[\whitecircleheight][0pt]{$\underset{\rlap{$\displaystyle
#1$}}{\crossandcirclesymbol}$}}}
\newcommand{\gb}{\beta}
\newcommand{\ga}{\alpha}
\newcommand{\gl}{\lambda}
\newcommand{\gd}{\delta}
\newcommand{\gD}{\Delta}
\renewcommand{\ge}{\epsilon}
\newcommand{\gp}{\varphi}
\newcommand{\gs}{\sigma}
\newcommand{\fb}{{\mathfrak b}}
\newcommand{\fg}{{\mathfrak g}}
\newcommand{\fh}{{\mathfrak h}}
\newcommand{\fl}{{\mathfrak l}}
\newcommand{\fn}{{\mathfrak n}}
\newcommand{\fq}{{\mathfrak q}}
\newcommand{\fu}{{\mathfrak u}}
\newcommand{\fz}{{\mathfrak z}}
\newcommand{\f}{\mathfrak}
\newcommand{\eV}{\EuScript{V}}
\newcommand{\eX}{\EuScript{X}}
\newcommand{\cga}{\alpha^\vee}
\newcommand{\cgb}{\beta^\vee}
\newcommand{\nbar}{\bar{n}}                 
\newcommand{\R}{\mathbb{R}}          
\newcommand{\C}{\mathbb{C}}          
\newcommand{\Z}{\mathbb{Z}}
\newtheorem{Thm}[equation]{Theorem}
\newtheorem{Lem}[equation]{Lemma}
\newtheorem{Cor}[equation]{Corollary}
\newtheorem{Prop}[equation]{Proposition}
\newtheorem{Def}[equation]{Definition}
\numberwithin{equation}{section}
\newcommand{\be}{\begin{equation}}
\newcommand{\beu}{\begin{equation*}}
\newcommand{\acts}{ {\raisebox{1pt} {$\scriptstyle \bullet$} } }
\newcommand{\ad}{\text{ad}}
\newcommand{\Ad}{\text{Ad}}
\newcommand{\Cal}{\mathcal}
\newcommand{\Hom}{\text{Hom}}
\newcommand{\IP}[2]{\langle#1 , #2\rangle}     
\newcommand{\flg}{\frak{l}_{\gamma}}
\newcommand{\flng}{\frak{l}_{n\gamma}}
\newcommand{\cfn}{\frak{z}(\frak{n})}
\newcommand{\xig}{\xi_\gamma}
\newcommand{\xing}{\xi_{n\gamma}}
\newcommand{\geg}{\epsilon_\gamma}
\newcommand{\geng}{\epsilon_{n\gamma}}
\newcommand{\ttau}{\tilde{\tau}}
\newcommand{\Sym}{\text{Sym}}
\newcommand{\tu}{\tilde{u}}
\newcommand{\vep}{\varepsilon}
\begin{document}

\bibliographystyle{amsplain}

\baselineskip=16pt

\title[Homomorphisms between generalized Verma modules]
{On the homomorphisms between the generalized Verma modules
arising from conformally invariant systems}                                     

\author{Toshihisa Kubo}
\address{Graduate School of Mathematical Sciences, 
The University of Tokyo,
 3-8-1 Komaba, Meguro-ku, Tokyo 153-8914, Japan}
\email{toskubo@ms.u-tokyo.ac.jp}

\subjclass[2010]{Primary 22E47; Secondary 17B10}
\keywords{conformally invariant systems, intertwining differential operators,
generalized Verma modules}

\maketitle


\begin{abstract}
It is shown by Barchini, Kable, and Zierau that 
conformally invariant systems of differential operators
yield explicit homomorphisms between certain generalized Verma modules. 
In this paper we determine whether or not the homomorphisms
arising from such systems of first and second order
differential operators  
associated to maximal parabolic subalgebras 
of quasi-Heisenberg type are standard.
\end{abstract}

 \section{Introduction}\label{chap:intro}

The main work of this paper concerns homomorphisms
between the generalized Verma modules arising from
conformally invariant systems of differential operators.
As a conformally invariant system is a central object of this paper,
we begin with introducing
the definition of such systems of operators.
Loosely speaking, a conformally invariant system is 
a system of differential operators
that are equivariant under a Lie algebra action. 
To describe the equivariance condition precisely,
let $\fg_0$ be a real Lie algebra.
The definition of conformally invariant systems requires
the notions of a $\fg_0$-manifold and $\fg_0$-bundle.
First, a smooth manifold $M$
is said to be a \emph{$\fg_0$-manifold} if there exists a Lie algebra homomorphism 
$\pi_M: \fg_0 \to C^\infty(M)\oplus \eX(M)$, 
where $\eX(M)$ is the space of smooth vector fields on $M$.
Here, the Lie algebra structure of $C^\infty(M)\oplus \eX(M)$
is the standard one induced from the algebra structure of differential operators.
Given $\fg_0$-manifold $M$, write 
$\pi_M(X) = \pi_0(X) + \pi_1(X)$ with $\pi_0(X) \in C^\infty(M)$
and $\pi_1(X) \in \eX(M)$. 
Next, let $\mathbb{D}(\eV)$ denote the space of differential operators 
on a vector bundle $\eV \to M$. 
We regard any smooth functions $f$ on $M$ 
as elements in $\mathbb{D}(\eV)$ by identifying them with
the multiplication operator that they induce.
Then we  say that a vector bundle $\eV \to M$ is
a \emph{$\fg_0$-bundle} if   
there exists a Lie algebra homomorphism 
$\pi_\eV: \fg_0 \to \mathbb{D}(\eV)$ so that 
in $\mathbb{D}(\eV)$ $[\pi_\eV(X), f]= \pi_1(X) \acts f$
for all $X \in \fg_0$ and all $f \in C^\infty(M)$,
where the dot $\acts$ denotes 
the action of the differential operator $\pi_1(X)$. 
Here, as for $C^\infty(M)\oplus \eX(M)$, the Lie algebra 
structure of $\mathbb{D}(\eV)$ is the standard one
coming from its algebra structure of operators with composition.
Now, given $\fg_0$-bundle $\eV\to M$,
a system of linearly independent differential operators
$D_1, \ldots, D_m \in \mathbb{D}(\eV)$
is called a \emph{conformally invariant system}
on $\eV$ with respect to $\pi_\eV$
if,  for all $X \in \fg_0$, it satisfies the bracket identity
\begin{equation*}
[\pi_\eV(X), D_j] = \sum_{i}^mC_{ij}^XD_i,
\end{equation*}
where $C_{ij}^X$ are smooth functions on $M$.
By extending the Lie algebra homomorphisms $\pi_M$ and $\pi_\eV$ $\C$-linearly, 
the definitions of a $\fg_0$-manifold, $\fg_0$-bundle, and 
conformally invariant system can be applied equally well to
the complexified Lie algebra $\fg = \fg_0 \otimes_\R \C$.

The Laplacian $\gD$
on $\R^n$ and wave operator $\square$ on the Minkowski space $\R^{3,1}$ 
are two typical examples for conformally invariant systems 
consisting of one differential operator.
The notion of conformally invariant systems generalizes that 
of Kostant's quasi-invariant differential operator (\cite{Kostant75}).
A systematic study of conformally invariant systems recently
started with the work of Barchini-Kable-Zierau in \cite{BKZ08} and \cite{BKZ09},
and the study of such systems of operators is continued in  
\cite{Kable11}, \cite{Kable12A}, \cite{Kable12B}, \cite{Kable12C},
\cite{Kable}, \cite{KuboThesis1}, and \cite{Kubo11}.

While the works 
\cite{BKZ08}, \cite{Kable11}-\cite{Kable}, \cite{KuboThesis1}, and \cite{Kubo11} 
mainly focus on the construction of conformally invariant systems 
or the solution spaces to such systems of operators, 
we in this paper study the homomorphisms between generalized Verma 
modules that arise from conformally invariant systems.
Homomorphisms between generalized Verma modules
(or equivalently intertwining differential operators between
degenerate principal series representations)
have received a lot of attentions from many points of views
(see for example \cite{Boe85}, \cite{Dobrev-a12}, 
\cite{Jakobsen85}, \cite{KP-a13}, and \cite{Matumoto06}).
It has been shown in \cite{BKZ09} that a conformally invariant system 
yields a homomorphism between certain generalized Verma modules,
one of which is non-scalar. 
In the present work we would like to understand the ``standardness" of 
such homomorphisms.
  A homomorphism between generalized Verma modules is 
called \emph{standard} if it is induced from a homomorphism between
the corresponding (ordinary) Verma modules, 
and called \emph{non-standard} otherwise. 
While standard homomorphisms are well-understood
(see for example \cite{Boe85} and \cite{Lepowsky77}), 
the classification of non-standard homomorphisms is still an open problem.
See for instance \cite{BC85}, \cite{BC86},
and Section 11.5 of \cite{BE89} for the classification of such maps
for certain cases. 
We may want to note that 
much of the published work concerning 
non-standard homomorphisms is for the case that 
the nilpotent radical $\fn$ for 
parabolic subalgebra $\fq = \fl \oplus \fn$ is abelian.

In \cite{KuboThesis1} we have built a number of 
conformally invariant systems of first and second order differential operators,
that are associated to maximal parabolic subalgebras $\fq = \fl \oplus \fn$
with nilpotent radical $\fn$ 
satisfying the conditions that $[\fn, [\fn, \fn]] = 0$ and $\dim([\fn, \fn]) >1$.
We call such nilpotent algebra $\fn$ \emph{quasi-Heisenberg} and 
such parabolic subalgebras $\fq$ 
\emph{quasi-Heinseberg type}.
Then, in this paper, we determine whether or not the homomorphisms
between the generalized Verma modules arising 
from the systems of operators associated to maximal parabolic subalgebras
$\fq$ of quasi-Heisenberg type are standard.
As the nilpotent radical $\fn$ of $\fq = \fl \oplus \fn$ is quasi-Heisenberg, 
this gives examples of non-standard maps beyond the scope of the case
that $\fn$ is abelian.

To describe our work more precisely, we now briefly review the results
of \cite{KuboThesis1}.
Let $G$ be a complex, simple, connected, simply-connected Lie group
with Lie algebra $\fg$. Give a $\Z$-grading 
$\fg = \bigoplus_{j=-r}^r \fg(j)$ on $\fg$ so that 
$\fq = \fg(0) \oplus \bigoplus_{j>0} \fg(j) = \fl \oplus \fn$ 
is a parabolic subalgebra.
Let $Q = N_G(\fq) = LN$. For a real form $\fg_0$ of $\fg$,
define $G_0$ to be an analytic subgroup of $G$ with Lie algebra $\fg_0$.
Set $Q_0 = N_{G_0}(\fq)$. Our manifold is $M=G_0/Q_0$ and we consider
a line bundle $\Cal{L}_{s} \to G_0/Q_0$ for each $s\in \C$.
By the Bruhat theory, the homogeneous space $G_0/Q_0$ admits an open dense
submanifold $\bar{N}_0Q_0/Q_0$. We restrict our bundle to this submanifold.
By slight abuse of notation we refer to the restricted bundle as $\Cal{L}_s$.
The systems that we construct act on smooth sections of 
the restricted bundle $\Cal{L}_s$.

Our systems of operators are constructed from 
$L$-irreducible constituents $W$ of  
$\fg(-r+k) \otimes \fg(r)$ for $1\leq k \leq 2r$.
We call the systems of operators \emph{$\Omega_k$ systems}.
(We shall describe the construction more precisely in Section \ref{SS:Prelim}.)
It is not necessary that every $L$-irreducible constituent of
$\fg(-r+k) \otimes \fg(r)$ contributes to the construction for $\Omega_k$ systems.
Then we call irreducible constituents $W$ \emph{special}
if they contribute to the systems of operators.
Here, we should remark a certain discrepancy of the definition for 
special constituents between this paper and \cite{KuboThesis1}.
In \cite{KuboThesis1},
special constituents for $\Omega_2$ systems are defined 
as irreducible constituents of $\fg(0) \otimes \fg(2)$ whose
highest weights satisfy a certain technical condition.
(See Definition 6.7 of \cite{KuboThesis1}.)
In the paper we first observed that,
if an irreducible constituent of $\fg(0) \otimes \fg(2)$
contributes to an $\Omega_2$ system
then its highest weight satisfies the technical condition. 
We then tried to show that the opposite direction also holds;
namely, we tried to show that irreducible constituents with 
the highest weight condition contribute to $\Omega_2$ systems.
For all the cases but two, it is 
verified that such irreducible constituents do contribute to the construction. 
The difficulty for the two open cases is that 
there is a problem to apply to these cases
the method that is used for any other cases.
We do expect that also in the open cases the constituents with the 
highest weight condition contribute to the construction.
Thus we redefined special constituents in the way introduced
at the beginning of this paragraph, so that the definition works 
not only for $\Omega_2$ systems but also for $\Omega_k$ systems 
for general $k$. 
We would like to verify the open cases elsewhere and 
so the two definitions for special constituents do agree.

There is no reason to expect that $\Omega_k$ systems are 
conformally invariant on $\Cal{L}_{s}$ for arbitrary $s \in \C$;
the conformal invariance of $\Omega_k$ systems 
depends on the complex parameter $s$ for the line bundle $\Cal{L}_{s}$. 
We then say that an $\Omega_k$ system has \emph{special value $s_0$} 
if the system is conformally invariant on the line bundle $\Cal{L}_{s_0}$.

In \cite{KuboThesis1}, 
we found the special values of
the $\Omega_1$ system and certain $\Omega_2$ systems 
associated to a maximal parabolic subalgebra
$\fq$ of quasi-Heisenberg type.
We may want to note that,
to find the special values for $\Omega_2$ systems,
the technical condition on the highest weights for 
the special constituents plays a crucial role.
(See Section 7 of \cite{KuboThesis1}.)
See Theorem \ref{Thm8.1.2} and Table \ref{T93} for
the special values of these systems.
In Table \ref{T93}, one notices that
there are two missing cases, the cases with a question mark (?).
These are the two open cases mentioned above.
We would like to fill in the gaps in the future.

In this paper, with the special values determined in \cite{KuboThesis1} in hand,
for $k=1,2$, we classify the homomorphisms
$\varphi_{\Omega_k}$ 
between the generalized Verma modules
arising from the conformally invariant $\Omega_k$ systems 
as standard or non-standard.
Our main tool is a well-known result due to Lepowsky (Theorem \ref{Thm:Std}).
It turns out that the map $\varphi_{\Omega_k}$ is non-standard if and only if 
the special value $s_0$ of an $\Omega_k$ system is a positive integer.
See Theorem \ref{Thm:MapO1} for the result 
for the map $\varphi_{\Omega_1}$.
Table \ref{THom} summarizes
the classification for $\varphi_{\Omega_2}$.

Now we outline the rest of this paper. 
This paper consists of six sections with this introduction and one appendix.
In Section 2 we recall from \cite{KuboThesis1} the construction of 
the $\Omega_k$ systems. We also review maximal
parabolic subalgebras $\fq$ of quasi-Heisenberg type in this section.
Section 3 discusses the relationship between conformally invariant 
$\Omega_k$ systems and homomorphisms between generalized Verma modules.
We start Section 4 with reviewing the general facts on the standard homomorphisms.
We then specialize such facts to the situation that we concern. 

In Sections 5 and 6, for $k=1,2$, 
we determine whether or not the homomorphisms
$\varphi_{\Omega_k}$ arising from the $\Omega_k$ systems 
associated to the maximal
parabolic subalgebra $\fq$ under consideration
are standard.
This is done in four theorems, namely, 
Theorem \ref{Thm:MapO1}, 
Theorem \ref{Thm:MapO2Type2},
Theorem \ref{Thm:MapO2Int}, and 
Theorem \ref{Thm:MapO2Bn(i)}.

Finally, in Appendix \ref{chap:Data}, we recall from \cite{KuboThesis1}
the miscellaneous useful data for the parabolic subalgebras 
under consideration. The data will be referred to in several proofs
in this paper.

\vskip 0.1in
\noindent \textbf{Acknowledgment.}
This work is part of author's Ph.D. thesis at Oklahoma State University.
The author would like to thank his advisor, Leticia Barchini, 
for her generous guidance. 
He would also like to thank Anthony Kable and  
Roger Zierau for their valuable comments on this work,
and the referee for careful reading.

\section{Preliminaries}\label{SS:Prelim}

The purpose of this section is to recall from \cite{KuboThesis1} 
our construction of systems of differential operators.
We also review the maximal parabolic subalgebras of quasi-Heisenberg type.

\subsection{A specialization of the theory}\label{SS:Setup}

First we recall from Subsection 2.1 in \cite{KuboThesis1} 
the $\fg$-manifold and $\fg$-bundle that we study in this paper.
Let $G$ be a complex, simple, connected, simply-connected Lie group
with Lie algebra $\fg$. Such $G$ contains a maximal connected solvable
subgroup $B$. Write $\fb =\fh \oplus \fu$ for its Lie algebra with $\fh$ 
the Cartan subalgebra and $\fu$ the nilpotent subalgebra. 
Let $\fq \supset \fb$ be a parabolic subalgebra of $\fg$. 
We define $Q = N_G(\fq)$, a parabolic subgroup of $G$.
Write $Q = LN$ for the Levi decomposition of $Q$.

Let $\fg_0$ be a real form of $\fg$ in which 
the complex parabolic subalgebra $\fq$ has a real form $\fq_0$,
and let $G_0$ be the analytic subgroup of $G$ with Lie algebra $\fg_0$. 
Define $Q_0 = N_{G_0}(\fq) \subset Q$, and write $Q_0=L_0N_0$.
We will work with $G_0/Q_0$ for a class of maximal parabolic subgroup $Q_0$
whose Lie algebra $\fq_0$ is of two-step nilpotent type.

Next, let $\gD = \gD(\fg,\fh)$ be the set of roots of $\fg$ with respect to $\fh$.
Let $\gD^+$ be the positive system attached to $\fb$ and 
denote by $\Pi$ the set of simple roots.
We write $\fg_\ga$ for the root space for $\ga \in \gD$.
For each subset $S \subset \Pi$, let $\fq_S$ be the 
corresponding standard parabolic subalgebra. 
Write $\fq_S = \fl_S \oplus \fn_S$ with Levi factor 
$\fl_S = \fh \oplus  \bigoplus_{\ga \in \gD_S}\fg_\ga$ and 
nilpotent radical $\fn_S = \bigoplus_{\ga \in \gD^+ \backslash \gD_S} \fg_\ga$,
where $\gD_S = \{ \ga \in \gD \; | \; \ga \in \textrm{span}(\Pi \backslash S) \}$.
If $Q_0$ is a maximal parabolic subgroup then 
there exists a unique simple root $\ga_\fq \in \Pi$ so that 
$\fq = \fq_{\{\ga_\fq\}}$. Let $\gl_\fq$ be the fundamental weight of $\ga_\fq$.
The weight $\gl_\fq$ is orthogonal to any roots $\ga$ 
with $\fg_\ga \subset [\fl, \fl]$. Hence it exponentiates to 
a character $\chi_\fq$ of $L$. As $\chi_\fq$ takes real values
on $L_0$, for $s \in \C$, character $\chi^{s} =|\chi_\fq|^{s}$
is well-defined on $L_0$. 
Let $\C_{\chi^{s}}$ be the one-dimensional representation of $L_0$
with character $\chi^{s}$. 
The representation $\chi^{s}$ is extended to a representation of $Q_0$
by making it trivial on $N_0$. It then deduces a line bundle $\Cal{L}_{s}$
on $ G_0/Q_0$ with fiber $\C_{\chi^{s}}$.

The group $G_0$ acts on the space
\begin{equation*}
C^\infty_{\chi}(G_0/Q_0, \C_{\chi^{s}})
= \{ F \in C^\infty(G_0, \C_{\chi^{s}}) \; |\;
\text{$F(gq) = \chi^{s}(q^{-1})F(g)$ for all $q \in Q_0$ and $g \in G_0$} \}
\end{equation*}
\noindent by left translation.
The action $\pi_s$ of $\fg_0$ on $C^\infty_\chi(G_0/Q_0, \C_{\chi^{s}})$
arising from this action is given by
\begin{equation}\label{Eqn:Action}
(\pi_s(Y) \acts F)(g) = \frac{d}{dt}F(\exp(-tY)g)\big|_{t=0}
\end{equation}

\noindent for $Y \in \fg_0$.
This action is extended $\C$-linearly
to $\fg$ and then naturally to the universal enveloping algebra $\Cal{U}(\fg)$.
We use the same symbols for the extended actions.

Let $\bar{N}_0$ be the unipotent subgroup opposite to $N_0$.
The natural infinitesimal action of $\fg$ on the image of 
the restriction map 
$C^\infty_{\chi}(G_0/Q_0, \C_{\chi^{s}}) \to C^\infty(\bar N_0, \C_{\chi^{s}})$
induced by (\ref{Eqn:Action}) gives an action of $\fg$ on the whole space
$C^\infty(\bar N_0, \C_{\chi^{s}})$. We also denote by $\pi_s$ the induced action.
Observe that we have the direct sum $\fg = \bar \fn \oplus \fq$.
If we write $Y = Y_{\bar \fn} + Y_{\fq}$ for the decomposition 
of $Y \in \fg$ in this direct sum then, 
for $Y \in \fg$ and $f \in C^\infty(\bar N_0, \C_{\chi^{s}})$,
the derived action of $\fg$ on 
$C^\infty(\bar N_0, \C_{\chi^{s}})$
is given by
\begin{equation}\label{Eqn7.2.2}
\big(\pi_s(Y) \acts f \big)(\nbar) = s\gl_\fq \big( (\Ad(\nbar^{-1})Y)_\fq \big) f(\nbar) 
-\big( R \big( (\Ad(\nbar^{-1})Y)_{\bar \fn} \big) \acts f \big)(\nbar),
\end{equation}
where $R$ is the infinitesimal right translation of $\fg$.
The line bundle $\Cal{L}_{s} \to G_0/Q_0$ restricted to
$\bar{N}_0$ is the trivial bundle 
$\bar{N}_0 \times \C_{\chi^{s}} \to \bar{N}_0$.
By slight abuse of notation, we refer to the trivial bundle over $\bar{N}_0$
as $\Cal{L}_{s}$. It follows from the observation in Subsection 2.1 
in \cite{KuboThesis1} that $\bar{N}_0$ and $\Cal{L}_{s} \to \bar{N}_0$
are a $\fg$-manifold and $\fg$-bundle, respectively.

\subsection{The $\Omega_k$ systems}\label{SS:Setup}

In this subsection we briefly recall from Subsection 3.1 of \cite{KuboThesis1}
our construction of differential operators.
For a subspace $W$ of $\fg$, we write 
$\gD(W) = \{\ga \in \gD \; | \; \fg_\ga \subset W\}$
and $\Pi(W) = \gD(W) \cap \Pi$.
We keep the notation from the previous subsection, unless otherwise specified.

Let $\fg = \bigoplus_{j=-r}^r \fg(j)$ be a 
$\Z$-grading on $\fg$ with $\fg(1) \neq 0$.
For $1\leq k \leq 2r$, we define a map 
$\tau_k: \fg(1) \to \fg(-r+k) \otimes \fg(r)$  
by $X \mapsto \frac{1}{k!} \big( \ad(X)^k\otimes \text{Id}\big) \omega$
with $\omega = \sum_{\gamma_j \in \gD(\fg(r))}
X_{-\gamma_j} \otimes X_{\gamma_j}$,
where $X_{\gamma_j}$ are root vectors for $\gamma_j$
so that $\{X_{\gamma_j}, X_{-\gamma_j}, [X_{\gamma_j}, X_{-\gamma_j}]\}$
is an $\f{sl}(2)$-triple.
Take $L$ to be the analytic subgroup 
of $G$ with Lie algebra $\fg(0)$, and
let $W$ be an $L$-irreducible constituent of 
$\fg(-r+k) \otimes \fg(r)$.
Write $\Cal{P}^k(\fg(1))$ for the space of polynomials on $\fg(1)$
of homogeneous degree $k$.
If $W^*$ is the dual space of $W$ with respect to the Killing form $\kappa$
then there exists an $L$-intertwining operator 
$\ttau_k|_{W^*} \in \Hom_L(W^*, \Cal{P}^k(\fg(1)))$
so that, for $Y^* \in W^*$,
$\ttau_k|_{W^*}(Y^*)(X) = Y^*(\tau_k(X))$.
Here, we may want to note that $Y^*(\tau_k(X))$ is well-defined 
for $\tau_k(X) \notin W$. Indeed, observe that,
as $\fg(-r+k)^* \otimes \fg(r)^* \cong \fg(r-k) \otimes \fg(-r)$
via the Killing form $\kappa$,
the element $Y^* \in W^*\subset \fg(-r+k)^* \otimes \fg(r)^*$ 
is a linear combination of 
$\kappa(X_{\ga},  \cdot)\otimes \kappa(X_{\gb}, \cdot)$
with constant coefficients,
where $X_{\ga}$ and $X_{\gb}$ are root vectors for 
$\ga\in \gD(\fg(r-k))$ and $\gb \in \gD(\fg(-r))$.
If $Y^* = \sum_{\ga, \gb} c_{\ga, \gb}\; \kappa(X_{\ga}, \cdot)\otimes \kappa(X_{\gb}, \cdot)$ 
with constants $c_{\ga, \gb}$ then
$Y^*(\tau_k(X))$ is given by
$Y^*(\tau_k(X)) = (1/k!)\sum_{\gamma_j, \ga, \gb}c_{\ga,\gb}\;
\kappa(X_{\ga}, \ad(X)^k(X_{-\gamma_j}))\;\kappa(X_{\gb}, X_{\gamma_j})$.

 If $\ttau_k|_{W^*} \not \equiv 0$ then we call
the irreducible constituent $W$
\textbf{special} for $\tau_k$.
Given special constituent $W$ for $\tau_k$,  
we consider the following composition of linear maps:
\begin{equation}\label{Eqn:Comp}
W^* \stackrel{\ttau_k|_{W^*}}{\to} \Cal{P}^k(\fg(1))
\cong \Sym^k(\fg(-1)) \stackrel{\gs}{\hookrightarrow} \Cal{U}(\bar \fn)
\stackrel{R}{\to} \mathbb{D}(\Cal{L}_{s})^{\bar \fn}.
\end{equation}
Here, $\gs: \Sym^k(\fg(-1)) \to \Cal{U}(\bar \fn)$ is the symmetrization operator
and $\mathbb{D}(\Cal{L}_{s})^{\bar \fn}$ is the space of $\bar \fn$-invariant 
differential operators for $\Cal{L}_{s}$. 
Let $\Omega_k|_{W^*}: W^* \to \mathbb{D}(\Cal{L}_{s})^{\bar \fn}$ be 
the composition of linear maps, namely,
$\Omega_k|_{W^*} = R \circ \gs \circ \ttau_k|_{W^*}$.
For simplicity we write 
$\Omega_k(Y^*)=\Omega_k|_{W^*}(Y^*)$  
for the differential operator arising from $Y^* \in W^*$.
Note that the linear operator 
$\Omega_k|_{W^*}: W^* \to \mathbb{D}(\Cal{L}_{s})^{\bar \fn}$
is an $L_0$-intertwining operator.
(See the observation at the end of Section 3.1 of \cite{KuboThesis1}.)

Now, given basis $\{Y^*_1, \ldots, Y^*_m\}$ for $W^*$,
we have a system of differential operators
\begin{equation}
\Omega_k(Y^*_{1}), \ldots, \Omega_k(Y^*_{m}).
\end{equation}
We call such a system of operators the \textbf{$\Omega_k|_{W^*}$ system}.
When the irreducible constituent $W^*$ is not important, 
we simply refer to each $\Omega_k|_{W^*}$ system
as an $\Omega_k$ system. 
We may want to note that 
$\Omega_k$ systems are independent of the choice for a basis for $W^*$
up to some natural equivalence. (See Definition 3.5 of \cite{KuboThesis1}.)

It is important to notice that 
it is not necessary for $\Omega_k$ systems to be conformally 
invariant; their conformal invariance strongly depends on the complex
parameter $s$ for the line bundle $\Cal{L}_{s}$. 
So we say that an $\Omega_k$ system has 
\textbf{special value $s_0$} if
the system is conformally invariant on the line bundle $\Cal{L}_{s_0}$.
In \cite{KuboThesis1}, we have found the special values for the 
$\Omega_1$ system and certain $\Omega_2$ systems
associated to maximal parabolic subalgebras $\fq$ of quasi-Heinseberg type.
We shall show the special values in Sections \ref{SS:Omega1Hom}
and \ref{SS:Omega2Hom}, respectively.

\subsection{Maximal parabolic subalgebras of quasi-Heisenberg type}
\label{SS:TwoStep}

In Sections \ref{SS:Omega1Hom} and \ref{SS:Omega2Hom}, 
with the special values determined in \cite{KuboThesis1} in hand,
we shall determine whether or not the homomorphisms
arising from the $\Omega_1$ system and $\Omega_2$ systems
associated to maximal parabolic subalgebras $\fq$ of 
quasi-Heisenberg type are standard.
Then, in this section, we recall from Section 4 of \cite{KuboThesis1} 
such maximal parabolic subalgebras $\fq$.

First, we call a maximal parabolic subalgebra $\fq = \fl \oplus \fn$
\textbf{quasi-Heisenberg type} 
if its nilradical $\fn$ satisfies the conditions that 
$[\fn, [\fn, \fn]] = 0$ and $\dim([\fn, \fn]) >1$.
Let $\ga_\fq$ be a simple root, so that 
the parabolic subalgebra $\fq=\fq_{\{\ga_\fq\}} = \fl \oplus \fn$ 
determined by $\ga_\fq$ 
is of quasi-Heisenberg type.
Let $\IP{\cdot}{\cdot}$ be the inner product induced on $\fh^*$
corresponding to the Killing form $\kappa$.
Write $||\ga||^2 = \IP{\ga}{\ga}$ for $\ga \in \gD$.
The coroot of $\ga$ is $\ga^{\vee} = 2\ga/\IP{\ga}{\ga}$.

Recall from Subsection 2.1
that $\gl_\fq$ denotes the fundamental weight for $\ga_\fq$.
If $H_{\gl_\fq} \in \fh$ is defined by $\kappa(H, H_{\gl_\fq}) = \gl_\fq(H)$ 
for all $H \in \fh$ and if
$H_\fq = (2/||\ga_\fq||^2)H_{\gl_\fq}$
\noindent then as $\fq$ has two-step nilpotent radical,
for $\gb \in \gD^+$, $\gb(H_\fq)$ can only take the values of 
$0$, $1$, or $2$.
Therefore,
if $\fg(j)$ denotes the $j$-eigenspace of $\ad(H_\fq)$
then the action of $\ad(H_\fq)$ on $\fg$ induces a 2-grading
$\fg = \bigoplus_{j=-2}^2\fg(j)$
with parabolic subalgebra
$\fq = \fg(0) \oplus \fg(1) \oplus \fg(2)$,
where $\fl = \fg(0)$ and $\fn = \fg(1) \oplus \fg(2)$.
The subalgebra $\bar \fn$, 
the nilpotent radical opposite to $\fn$, is given by
$\bar \fn = \fg(-1) \oplus \fg(-2)$.
Here we have $\fg(0) = \fl$, $\fg(2) = \fz(\fn)$ and $\fg(-2) = \fz(\bar\fn)$,
where $\fz(\fn)$ (resp. $\fz(\bar \fn)$) is the center of $\fn$ (resp. $\bar \fn$).
Thus we denote the 2-grading  on $\fg$ by
\begin{equation}\label{Eqn4.1.6}
\fg = \fz(\bar \fn) \oplus \fg(-1) \oplus \fl \oplus \fg(1) \oplus \cfn
\end{equation}
\noindent with parabolic subalgebra 
\begin{equation*}\label{Eqn4.1.7}
\fq = \fl \oplus \fg(1) \oplus \fz(\fn).
\end{equation*}
Therefore the maps $\tau_k$ 
associated to the grading (\ref{Eqn4.1.6})
are given by
\begin{equation}\label{Eqn:TauTwoStep}
\tau_k: \fg(1) \to \fg(-2+k) \otimes \fz(\fn)
\end{equation}
for $1 \leq k \leq 4$.

We next consider the structure of the 
Levi subalgebra $\fl = \fz(\fl) \oplus [\fl,\fl]$,
where $\fz(\fl)$ is the center of $\fl$.
Observe that $\fz(\fl)$ is one-dimensional.
Indeed, we have
$\fz(\fl) = \bigcap_{\ga \in \Pi(\fl)}\ker(\ga)$
with $\Pi(\fl) = \Pi \backslash \{\ga_\fq\}$.
As $\fl = \fg(0)$, we have 
$\ga(H_\fq) = 0$ for all $\ga \in \gD(\fl)$.
Thus, $H_\fq$ is an element of $\fz(\fl)$,
and so  we have $\fz(\fl) = \C H_\fq$. 

To observe the semisimple part $[\fl,\fl]$ of $\fl$,
let $\gamma$ be the highest root of $\fg$.
If $\fg$ is not of type $A_n$ then there is exactly one simple root
that is not orthogonal to $\gamma$.
Let $\ga_\gamma$ be the unique simple root so that
$\fq' = \fq_{\{\ga_\gamma\}}$ is the  
parabolic subalgebra of Heisenberg type; that is, 
its nilradical $\fn'$ satisfies 
$\dim([\fn', \fn']) = 1$.
Hence, if $\fq = \fq_{\{\ga_\fq\}}$ is a parabolic subalgebra of 
quasi-Heisenberg type 
then $\ga_\gamma$ is in $\Pi(\fl) = \Pi \backslash \{\ga_\fq\}$.
The semisimple part $[\fl,\fl]$ is either simple or the direct sum of two or three 
simple ideals with only one simple ideal containing the root space 
$\fg_{\ga_\gamma}$ for $\ga_\gamma$.
Given Dynkin type $\Cal{T}$ of $\fg$,
if we write $\Cal{T}(i)$ for the Lie algebra together with the
choice of maximal parabolic subalgebra $\fq = \fq_{\{\ga_i\}}$
determined by $\ga_i$ 
then the three simple factors occur only when
$\fq$ is of type $D_n(n-2)$.
So, if $\fq$ is not of type $D_n(n-2)$ then 
there are at most two simple factors. In this case
we denote by $\flg$ (resp. $\flng$) the simple ideal of $\fl$
that contains (resp. does not contain) $\fg_{\ga_\gamma}$.
Thus $\fl$ may decompose into
\begin{equation}\label{Eqn4.1.8}
\fl = \C H_\fq \oplus \flg \oplus \flng.
\end{equation}
Note that when $[\fl, \fl]$ is a simple ideal, we have $\flng = \{0\}$.
(See Appendix \ref{chap:Data}.) 
The maximal parabolic subalgebras $\fq=\fl \oplus \fn$ 
of quasi-Heisenberg type with the decomposition 
(\ref{Eqn4.1.8}) are given as follows:

\begin{equation} \label{Eqn4.0.1}
B_n(i)\; (3 \leq i \leq n), \quad C_n(i) \; (2 \leq i \leq n-1), \quad D_n(i)\; (3\leq i \leq n-3),
\end{equation}
\noindent and 
\begin{equation}\label{Eqn4.0.2}
E_6(3),\; E_6(5),\; E_7(2),\; E_7(6),\; E_8(1), \;F_4(4).
\end{equation}
Here, the Bourbaki conventions \cite{Bourbaki08} are used for the labels of the simple roots.
Note that, in type $A_n$,  
any maximal parabolic subalgebra has abelian nilpotent radical,
and also that, in type $G_2$, 
the two maximal parabolic subalgebras are of
either 3-step nilpotent type or Heisenberg type.

\section{The $\Omega_k$ systems and generalized Verma modules}
\label{SS24}

The aim of this section is to show that conformally invariant 
$\Omega_k$ systems induce non-zero $\Cal{U}(\fg)$-homomorphisms 
between certain generalized Verma modules.
The main idea is that conformally invariant $\Omega_k$ systems yield
finite dimensional simple $\fl$-submodules of generalized Verma modules, 
on which $\fn$ acts trivially.

In general, to describe the relationship between conformally invariant systems
and generalized Verma modules, we realize generalized Verma modules
as the space of smooth distributions supported at the identity. 
However, in our setting that the $\fg$-bundle is a line bundle $\Cal{L}_{s}$,
it is not necessary to use such a realization. 
Thus, in this paper, we are going to describe the relationship 
without using the realization.
For the general theory see Sections 3, 5, and 6 of \cite{BKZ09}.

A generalized Verma module 
$M_\fq[W] := \Cal{U}(\fg) \otimes_{\Cal{U}(\fq)}W$ 
is a $\Cal{U}(\fg)$-module
that is induced from a finite dimensional simple $\fl$-module $W$ 
on which $\fn$ acts trivially.
Observe that if
$\C_{-s\gl_\fq}$ is the $\fq$-module derived from 
the $Q_0$-representation $(\chi^{-s}, \C)$ then 
the differential operators in 
$\mathbb{D}(\Cal{L}_{s})^{\bar \fn}$ can be described in terms of 
elements of $M_\fq[\C_{-s\gl_\fq}]$. Indeed, 
by identifying $M_\fq[\C_{-s\gl_\fq}] $ as $\Cal{U}(\bar\fn) \otimes \C_{-s\gl_\fq}$, 
the map $M_\fq[\C_{-s\gl_\fq}]  \to \Cal{U}(\bar \fn)$ given by 
$u \otimes 1 \mapsto u$ is an isomorphism of vector spaces. 
Then the composition
\begin{equation}\label{Eqn271}
M_\fq[\C_{-s\gl_\fq}]  \to \Cal{U}(\bar \fn) \stackrel{R}{\to}
\mathbb{D}(\Cal{L}_{s})^{\bar \fn}
\end{equation}
is a vector-space isomorphism.

Define
\begin{equation*}
M_\fq[W]^{\fn} = \{ v \in M_\fq[W] \; | \; 
X \cdot v = 0 \; \text{for all}\; X \in \fn\}.
\end{equation*}
The following result is the specialization of Theorem 19 in \cite{BKZ09}
to the present situation. 
For the definitions for straight, homogeneous, $L_0$-stable  
conformally invariant systems, see p. 797, p. 804 and p. 806 of \cite{BKZ09}.

\begin{Thm}\label{Thm242}
If $D=D_1, \ldots, D_m$ is a straight, homogeneous, $L_0$-stable  
conformally invariant system on the line bundle $\Cal{L}_{s}$,
and if $\omega_j$ denotes the element in $\Cal{U}(\bar \fn)$
that corresponds to $D_j$ for $j=1,\ldots, m$
via right differentiation $R$ in (\ref{Eqn271}) then the space
\begin{equation*}
F(D) = \text{\emph{span}}_\C\{\omega_j \otimes 1 \; | \; j = 1, \ldots, m\}
\end{equation*}
is an $L$-submodule of $M_\fq[\C_{-s\gl_\fq}]^{\fn}$.
\end{Thm}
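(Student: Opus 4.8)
The plan is to unwind the definitions and check directly that $F(D)$ is a subspace of $M_\fq[\C_{-s\gl_\fq}]$ that is killed by $\fn$ and stable under $\fl$. I would begin by recalling the two structures in play: first, the vector-space isomorphism (\ref{Eqn271}), $M_\fq[\C_{-s\gl_\fq}] \xrightarrow{\sim} \Cal{U}(\bar\fn) \xrightarrow{R} \mathbb{D}(\Cal{L}_{s})^{\bar\fn}$, under which $\omega_j \otimes 1$ corresponds to $D_j$; and second, the fact that $\mathbb{D}(\Cal{L}_{s})^{\bar\fn}$ carries a natural action of $\fg$ (hence of $\fl$ and $\fn$) coming from the bracket with $\pi_s$, which under the isomorphism matches the $\fg$-module structure that $M_\fq[\C_{-s\gl_\fq}]$ has as a quotient of $\Cal{U}(\fg)$. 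The key translation is that, for $X \in \fg$, the operator $[\pi_s(X), \cdot]$ on $\mathbb{D}(\Cal{L}_{s})^{\bar\fn}$ corresponds to the action of $X$ on $M_\fq[\C_{-s\gl_\fq}]$; this is exactly the content referenced from Sections 3, 5, 6 of \cite{BKZ09} and I would cite it rather than reprove it.

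Granting that dictionary, the argument splits into two verifications. For $\fn$-invariance: since $D = D_1,\dots,D_m$ is a conformally invariant system, for every $X \in \fg_0$ (hence, after complexification, for every $X \in \fg$) we have $[\pi_s(X), D_j] = \sum_i C_{ij}^X D_i$ with $C_{ij}^X \in C^\infty(M)$. The homogeneity and straightness hypotheses are what force, for $X \in \fn$, the bracket $[\pi_s(X), D_j]$ to in fact lie in $\mathbb{D}(\Cal{L}_s)^{\bar\fn}$ and be a \emph{constant-coefficient} combination of the $D_i$ — more precisely the grading/degree bookkeeping shows $[\pi_s(X),D_j]$ drops degree, and for a straight homogeneous system the only way the bracket identity can hold with differential-operator coefficients is with the $C_{ij}^X$ scalar, and for $X\in\fn$ actually zero. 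Translating back through the dictionary, $X \cdot (\omega_j \otimes 1) = 0$ in $M_\fq[\C_{-s\gl_\fq}]$ for all $X \in \fn$, so $F(D) \subseteq M_\fq[\C_{-s\gl_\fq}]^{\fn}$. For $\fl$-stability: the $L_0$-stability hypothesis says precisely that for $X \in \fl_0$ the bracket $[\pi_s(X), D_j]$ is a $\C$-linear combination $\sum_i c_{ij}^X D_i$ of the $D_i$ themselves; complexifying in $X$ and translating through (\ref{Eqn271}), $X \cdot (\omega_j \otimes 1) \in F(D)$ for all $X \in \fl$. Hence $F(D)$ is an $\fl$-submodule, and since it is finite-dimensional and $L$ is connected it is an $L$-submodule. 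Combined with the previous paragraph, $F(D)$ is an $L$-submodule of $M_\fq[\C_{-s\gl_\fq}]^{\fn}$, which is the claim.

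The step I expect to be the main obstacle is making the passage ``conformally invariant system $\Rightarrow$ the coefficients $C_{ij}^X$ are constants, and vanish for $X\in\fn$'' rigorous, i.e.\ genuinely using the \emph{straight} and \emph{homogeneous} hypotheses rather than hand-waving. This is where one must be careful about the difference between the bracket identity with $C^\infty(M)$-coefficients (the bare definition) and the statement needed here with scalar coefficients; the cleanest route is to invoke the relevant part of Theorem 19 of \cite{BKZ09} essentially verbatim, since the present Theorem is advertised as its specialization, and only spell out the degree/weight bookkeeping that shows $F(D)$ lands inside the $\fn$-invariants. Everything else — the identification (\ref{Eqn271}), the compatibility of the $\fg$-actions, and the upgrade from $\fl$-module to $L$-module via connectedness — is routine once the dictionary is in place.
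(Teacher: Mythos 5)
The paper does not actually prove this theorem: it is stated as the specialization of Theorem 19 of \cite{BKZ09} and the reader is referred to Sections 3, 5, and 6 of that paper, so your fallback of invoking that result verbatim coincides with what the paper does. The problem lies in the mechanism you sketch, and it sits exactly where you flagged the main obstacle. Your ``dictionary'' asserts that $[\pi_s(X),\cdot]$ acting on $\mathbb{D}(\Cal{L}_s)^{\bar\fn}$ matches the $\fg$-action on $M_\fq[\C_{-s\gl_\fq}]$ under (\ref{Eqn271}), and that for $X\in\fn$ the coefficients $C^X_{ij}$ are forced to vanish identically. Neither claim is correct. For $X\in\fn$ the bracket $[\pi_s(X),D_j]$ is in general not $\bar\fn$-invariant, so it is not in the domain of the isomorphism (\ref{Eqn271}) at all; and the $C^X_{ij}$ need not vanish identically --- already for the single-operator system given by the Laplacian on $\R^n$ one has $[\pi_s(X),\gD]=C^X\gD$ with $C^X$ a nonzero linear function of the coordinates when $X$ lies in the nilradical.

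What is actually true, and what the argument needs, is that $C^X_{ij}(e)=0$ for $X\in\fn$: homogeneity forces $C^X_{ij}$ to be a polynomial of positive degree on $\bar{N}_0\cong\bar\fn$ when $X\in\fg(j)$ with $j>0$, hence to vanish at the base point. The passage from the bracket identity to the module action is mediated by the realization of $M_\fq[\C_{-s\gl_\fq}]$ as distributions supported at the identity (which the paper mentions and then sidesteps): pairing $\omega_j\otimes 1$ with $f\mapsto (D_jf)(e)$, one computes that $X\cdot(\omega_j\otimes 1)$ corresponds to $f\mapsto([\pi_s(X),D_j]f)(e)=\sum_i C^X_{ij}(e)(D_if)(e)$, using that $(\pi_s(X)g)(e)=0$ for $X\in\fn$ by (\ref{Eqn7.2.2}) since $X_{\bar\fn}=0$ and $\gl_\fq(X)=0$. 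So the $\fn$-invariance of $F(D)$ rests on the vanishing of the coefficient functions \emph{at the identity}, not on their identical vanishing, and without the evaluation-at-$e$ pairing your translation step does not go through. The $\fl$-part of your argument is fine, since for $X\in\fg(0)$ the same degree count does make the $C^X_{ij}$ constant, and the upgrade from $\fl$-stability to $L$-stability by connectedness is routine.
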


Now, let $W$ be a special constituent of $\fg(-r+k) \otimes \fg(k)$ for $\tau_k$.
Let $\omega_k|_{W^*}: W^* \to \Cal{U}(\bar \fn)$ be 
the linear operator so that 
$\omega_k|_{W^*}(Y^*)$ 
is the element in $\gs(\Sym^k(\fn)) \subset \Cal{U}(\bar \fn)$
that corresponds to the differential operator
$\Omega_k(Y^*) =\Omega_k|_{W^*}(Y^*)$
in $\mathbb{D}(\Cal{L}_{s})^{\bar \fn}$,
via right differentiation $R$ in (\ref{Eqn:Comp}).
As for $\Omega_k(Y^*)$, for simplicity, we write
$\omega_k(Y^*) = \omega_k|_{W^*}(Y^*)$. 
Then, given basis $\{Y^*_1, \ldots, Y^*_m\}$ for $W^*$,
the space $F(\Omega_k|_{W^*})$ for the $\Omega_k|_{W^*}$ system
$\Omega_k|_{W^*} = \Omega_k(Y^*_1), \ldots, \Omega_k(Y^*_m)$ 
is given by
\begin{equation}\label{Eqn:Ltype1}
F(\Omega_k|_{W^*}) = \text{span}_\C\{
\omega_k(Y^*_j) \otimes 1 \; | \; j = 1, \ldots, m\}
\subset M_\fq[\C_{-s\gl_\fq}].
\end{equation}

\begin{Prop}\label{Cor243}
Suppose that special constituent $W^*$ has highest weight $\nu$.
\begin{enumerate}
\item The space $F(\Omega_k|_{W^*})$ is the simple $L$-submodule of 
$M_\fq[\C_{-s\gl_\fq}]$ with highest weight $\nu-s\gl_\fq$.
\item Moreover, if the $\Omega_k|_{W^*}$ system is conformally invariant on 
the line bundle $\Cal{L}_{s_0}$ then 
$F(\Omega_k|_{W^*})$ is a simple $L$-submodule of 
$M_\fq[\C_{-s_0\gl_\fq}]^{\fn}$ with highest weight $\nu-s_0\gl_\fq$. 
\end{enumerate}
\end{Prop}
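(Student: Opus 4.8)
The plan is to build the two statements on top of the already-established vector-space isomorphism (\ref{Eqn271}) and the $L_0$-intertwining property of $\Omega_k|_{W^*}$. First I would record that the composition $M_\fq[\C_{-s\gl_\fq}] \to \Cal{U}(\bar\fn) \xrightarrow{R} \mathbb{D}(\Cal{L}_{s})^{\bar\fn}$ is an isomorphism of vector spaces, and that under it the subspace $\gs(\Sym^k(\fn)) \subset \Cal{U}(\bar\fn)$ corresponds, on the left, to the degree-$k$ part $\Sym^k(\fg(-1)) \otimes 1 \subset M_\fq[\C_{-s\gl_\fq}]$. Transporting the map $\omega_k|_{W^*} : W^* \to \Cal{U}(\bar\fn)$ through this isomorphism gives a linear map $\widehat{\omega}_k|_{W^*} : W^* \to M_\fq[\C_{-s\gl_\fq}]$ whose image is exactly $F(\Omega_k|_{W^*})$ as defined in (\ref{Eqn:Ltype1}). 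Since $\Omega_k|_{W^*}$ is $L_0$-intertwining and the realization (\ref{Eqn271}) intertwines the natural $L$-actions (the action on $M_\fq[\C_{-s\gl_\fq}]$ being the adjoint action twisted by the character $-s\gl_\fq$, and the action on $\mathbb{D}(\Cal{L}_{s})^{\bar\fn}$ being by conjugation), the composite $\widehat{\omega}_k|_{W^*}$ is an $L$-module homomorphism from $W^*$ into $M_\fq[\C_{-s\gl_\fq}]$.

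Next I would argue that $\widehat{\omega}_k|_{W^*}$ is injective. The key point is the chain (\ref{Eqn:Comp}): $\Omega_k|_{W^*} = R \circ \gs \circ \ttau_k|_{W^*}$, where $\ttau_k|_{W^*}$ is nonzero precisely because $W$ is special for $\tau_k$, the symmetrization $\gs : \Sym^k(\fg(-1)) \hookrightarrow \Cal{U}(\bar\fn)$ is injective, and $R$ restricted to $\gs(\Sym^k(\fg(-1)))$ is injective (this is the content of (\ref{Eqn271}), since $\gs(\Sym^k(\fg(-1)))$ sits inside $\Cal{U}(\bar\fn)$ and $R$ is a linear isomorphism onto $\mathbb{D}(\Cal{L}_{s})^{\bar\fn}$). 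But $\ttau_k|_{W^*}$, being a nonzero $L$-homomorphism out of the \emph{simple} $L$-module $W^*$, is automatically injective. Composing three injective maps yields that $\widehat{\omega}_k|_{W^*}$ is injective, so $F(\Omega_k|_{W^*}) \cong W^*$ as $L$-modules; in particular it is a simple $L$-submodule of $M_\fq[\C_{-s\gl_\fq}]$. To pin down its highest weight as $\nu - s\gl_\fq$: a highest weight vector of $W^*$ maps to a vector in $\gs(\Sym^k(\fg(-1))) \otimes 1$, and the weight of $u \otimes 1 \in M_\fq[\C_{-s\gl_\fq}]$ for $u \in \Cal{U}(\bar\fn)$ of $\fh$-weight $\mu$ is $\mu - s\gl_\fq$; since the isomorphism of $\Sym^k(\fg(-1))$ with the relevant piece of $\Cal{P}^k(\fg(1))$ matches up weights so that the highest weight $\nu$ of $W^* \subset \fg(r-k)\otimes\fg(-r)$ is realized, the highest weight of $F(\Omega_k|_{W^*})$ is $\nu - s\gl_\fq$. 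This proves part (1).

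For part (2), conformal invariance of the $\Omega_k|_{W^*}$ system on $\Cal{L}_{s_0}$ means exactly that $D = \Omega_k(Y^*_1),\ldots,\Omega_k(Y^*_m)$ is a conformally invariant system on $\Cal{L}_{s_0}$; by the discussion recalled from \cite{KuboThesis1} (and \cite{BKZ09}) such systems are straight, homogeneous, and $L_0$-stable, so Theorem \ref{Thm242} applies and gives that $F(\Omega_k|_{W^*}) = F(D)$ is an $L$-submodule of $M_\fq[\C_{-s_0\gl_\fq}]^{\fn}$. Combining this with part (1), specialized to $s = s_0$, yields that $F(\Omega_k|_{W^*})$ is a simple $L$-submodule of $M_\fq[\C_{-s_0\gl_\fq}]^{\fn}$ with highest weight $\nu - s_0\gl_\fq$, as claimed.

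The main obstacle I anticipate is bookkeeping rather than conceptual: carefully tracking how the three identifications in (\ref{Eqn:Comp}) — $\Cal{P}^k(\fg(1)) \cong \Sym^k(\fg(-1))$, the symmetrization into $\Cal{U}(\bar\fn)$, and $R$ — interact with the $\fh$-weight gradings and with the twist by $-s\gl_\fq$, so that the highest weight comes out as $\nu - s\gl_\fq$ with the correct sign conventions. The other point requiring a little care is the assertion that $F(\Omega_k|_{W^*})$ is not merely a subquotient but genuinely a submodule isomorphic to $W^*$; this rests on the injectivity of $\ttau_k|_{W^*}$, which in turn uses simplicity of $W^*$ — so one should make sure the hypothesis that $W$ (equivalently $W^*$) is $L$-irreducible is invoked explicitly.
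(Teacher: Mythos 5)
Your proposal is correct and follows essentially the same route as the paper: the paper also deduces $L$-invariance from the $L_0$-equivariance of $\Omega_k|_{W^*}$, identifies $F(\Omega_k|_{W^*})$ with $W^*\otimes\C_{-s\gl_\fq}$ as an $L$-module (whence simplicity and the highest weight $\nu-s\gl_\fq$), and obtains part (2) from Theorem \ref{Thm242} together with the fact that a conformally invariant $\Omega_k|_{W^*}$ system is straight, homogeneous, and $L_0$-stable. Your explicit justification of injectivity via the factorization $R\circ\gs\circ\ttau_k|_{W^*}$ and the simplicity of $W^*$ only spells out a step the paper leaves implicit when it asserts the vector-space isomorphism $\omega_k(Y^*_j)\otimes 1\mapsto Y^*_j\otimes 1$.
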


\begin{proof}
First observe that,
by the $L_0$-equivariance of the operator $\Omega_k|_{W^*}:W^* \to \mathbb{D}(\Cal{L}_s)$,
for $l \in L$ and $Y^* \in W^*$, we have
\begin{equation*}
\omega_k(l \cdot Y^*) = \Ad(l)\omega_k(Y^*),
\end{equation*}
where the action $l \cdot Y^*$ is the standard action of $L$ on $W^*$,
which is induced from the adjoint action of $L$ on $W$.
This shows the $L$-invariance of $F(\Omega_k|_{W^*})$.
To show the irreducibility
observe that there exists a vector space isomorphism
\begin{equation*}\label{EqnF82}
F(\Omega_k|_{W^*}) \to W^* \otimes \C_{-s\gl_\fq},
\end{equation*}
that is given by $\omega_k(Y^*_j) \otimes 1 
\mapsto Y^*_j \otimes 1$.
It is clear that this vector space isomorphism 
is $L$-equivariant with respect to the standard action of $L$ on
the tensor products 
$F(\Omega_k|_{W^*})\subset \Cal{U}(\bar \fn) \otimes \C_{-s\gl_\fq}$ and
$W^* \otimes \C_{-s\gl_\fq}$.
In particular, if $W^*$ has highest weight $\nu$ then
$F(\Omega_k|_{W^*})$ is the simple $L$-module
with highest weight $\nu-s\gl_\fq$.

Note that, 
by Remark 3.8 in \cite{KuboThesis1},
if the $\Omega_k|_{W^*}$ system 
is conformally invariant then it is a straight, $L_0$-stable, and 
homogeneous system. Now the second assertion follows from
the first and Theorem \ref{Thm242}. 
\end{proof}

Now, if the $\Omega_k|_{W^*}$ system is conformally invariant 
on $\Cal{L}_{s_0}$ then, by Proposition \ref{Cor243},
$F(\Omega_k|_{W^*})$
is a simple $\fl$-submodule of 
$M_\fq[\C_{-s_0\gl_\fq}])$ on which $\fn$ acts trivially.
Thus the inclusion map
$\iota \in \Hom_{L} \big( F(\Omega_k|_{W^*}), M_\fq[\C_{-s_0\gl_\fq}] \big)$
induces a non-zero $\Cal{U}(\fg)$-homomorphism 
\begin{equation*}
\varphi_{\Omega_k} \in \Hom_{\Cal{U}(\fg), L}
\big(M_\fq[F(\Omega_k|_{W^*})], M_\fq[\C_{-s_0\gl_\fq}] \big)
\end{equation*}
between the generalized Verma modules, that is given by
\begin{align}\label{Eqn823}
M_\fq[F(\Omega_k|_{W^*})] &\stackrel{\varphi_{\Omega_k}}{\to} M_\fq[\C_{-s_0\gl_\fq}]\\
u \otimes \big( \omega_k(Y) \otimes 1) &\mapsto 
u \cdot \iota\big( \omega_k(Y) \otimes 1) \nonumber.
\end{align}

If $F(\Omega_k|_{W^*}) =\C_{-s_0\gl_\fq}$
then the map in (\ref{Eqn823}) is just the identity map. 
However, Proposition \ref{Prop244} below shows that 
it does not happen.

\begin{Prop}\label{Prop244}
If the $\Omega_k|_{W^*}$ system is conformally invariant on 
the line bundle $\Cal{L}_{s_0}$ then $F(\Omega_k|_{W^*}) \neq \C_{-s_0\gl_\fq}$.
\end{Prop}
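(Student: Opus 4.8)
The statement to prove is that if the $\Omega_k|_{W^*}$ system is conformally invariant on $\Cal{L}_{s_0}$, then $F(\Omega_k|_{W^*}) \neq \C_{-s_0\gl_\fq}$. The plan is to argue by contradiction: suppose $F(\Omega_k|_{W^*}) = \C_{-s_0\gl_\fq}$. By Proposition \ref{Cor243}, $F(\Omega_k|_{W^*})$ is a simple $L$-module with highest weight $\nu - s_0\gl_\fq$, where $\nu$ is the highest weight of $W^*$; so the assumption forces $\nu = 0$, i.e. $W^*$ is the trivial one-dimensional $L$-module, and hence $W$ is also trivial. So the real content is to show that a special constituent $W$ of $\fg(-r+k)\otimes\fg(r)$ (equivalently $\fg(-2+k)\otimes\fz(\fn)$ in the two-step setting of (\ref{Eqn:TauTwoStep})) cannot be the trivial $L$-module.

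First I would unwind what triviality of $W$ would mean at the level of weights. If $W \subset \fg(-r+k)\otimes\fg(r)$ is one-dimensional and $L$-trivial, then in particular $H_\fq$ acts by $0$ on $W$; but every weight of $\fg(-r+k)\otimes\fg(r)$ has $\ad(H_\fq)$-eigenvalue $(-r+k) + r = k > 0$ (using that $\fg(j)$ is the $j$-eigenspace of $\ad(H_\fq)$). Since $1 \le k \le 2r$, this eigenvalue is strictly positive, so no submodule of $\fg(-r+k)\otimes\fg(r)$ can be annihilated by $H_\fq$; in particular none can be $L$-trivial. Concretely, for the two-step gradings under consideration, $\tau_k: \fg(1) \to \fg(-2+k)\otimes\fz(\fn)$ with $1\le k\le 4$, and the $\ad(H_\fq)$-eigenvalue on the target is $k \ge 1$. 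This is the key mechanism, and it is where the grading hypothesis $\fg(1)\neq 0$ (so $r\ge 1$ and $k\ge 1$) does the work.

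An alternative, perhaps cleaner, route that avoids even invoking simplicity of $W$: argue directly on the module $M_\fq[\C_{-s_0\gl_\fq}]$. The equality $F(\Omega_k|_{W^*}) = \C_{-s_0\gl_\fq}$ would say that the span of the $\omega_k(Y^*_j)\otimes 1$ equals the line $1\otimes\C_{-s_0\gl_\fq} \subset \Cal{U}(\bar\fn)\otimes\C_{-s_0\gl_\fq}$ — i.e. each $\omega_k(Y^*_j) \in \C\cdot 1$. But by construction (\ref{Eqn:Comp}), $\omega_k(Y^*) \in \gs(\Sym^k(\fg(-1))) \subset \Cal{U}(\bar\fn)$, which for $k\ge 1$ lies in the augmentation ideal and is spanned by monomials of $\bar\fn$-degree exactly $k \ge 1$; since $\ttau_k|_{W^*}\not\equiv 0$ (that is exactly what "special" means), some $\omega_k(Y^*_j)$ is a nonzero such degree-$k$ element, hence not a scalar multiple of $1$. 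This contradicts the assumed equality.

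The main obstacle is essentially bookkeeping: being careful about which space $\Omega_k(Y^*)$ and $\omega_k(Y^*)$ live in, and that "special" guarantees $\ttau_k|_{W^*}$, hence $\omega_k|_{W^*}$, is not identically zero — so that $F(\Omega_k|_{W^*})$ genuinely contains a nonzero vector of positive $\bar\fn$-degree. Once that is pinned down, the weight/degree count is immediate. I expect the write-up to be short: one paragraph recalling that $\omega_k(Y^*)$ is a nonzero homogeneous element of $\Cal{U}(\bar\fn)$ of degree $k\ge 1$, and one line noting that $\C_{-s_0\gl_\fq}$ corresponds under (\ref{Eqn271}) to the degree-$0$ line $\C\cdot 1$, giving the contradiction.
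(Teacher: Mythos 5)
Your proposal is correct, but it establishes the key point --- that a special constituent $W$ cannot be the trivial $L$-module --- by a different mechanism than the paper. The paper also reduces to $\nu=0$ via Proposition \ref{Cor243}, but then argues with highest weights: every irreducible constituent of $\fg(-r+k)\otimes\fg(r)$ has highest weight of the form $\gamma+\eta$ with $\gamma$ the highest weight of $\fg(r)$ and $\eta$ a weight of $\fg(-r+k)$, and $\eta=-\gamma$ is impossible for $k\geq 1$ because $-\gamma$ occurs only as a weight of $\fg(-r)$. Your first argument replaces this with the observation that the grading element acts on all of $\fg(-r+k)\otimes\fg(r)$ by the scalar $k\geq 1$, so no constituent can be annihilated by $\fl$; this is more direct and avoids invoking the standard fact about highest weights of tensor-product constituents. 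The only care needed is that $H_\fq$ is introduced in the paper only in the quasi-Heisenberg setting of Subsection 2.3, so in the general graded setting of Proposition \ref{Prop244} you should appeal to the grading element of $\fg=\bigoplus_j\fg(j)$, which lies in $\fg(0)=\fl$. Your second argument, locating the generators $\omega_k(Y^*_j)$ of $F(\Omega_k|_{W^*})$ in $\gs(\Sym^k(\fg(-1)))$, the degree-$k$ part of $\Cal{U}(\bar\fn)$, while $\C_{-s_0\gl_\fq}$ corresponds to the degree-$0$ line $\C\cdot 1$, is also valid and bypasses weight considerations entirely; it uses only that $\gs$ and the identification $\Cal{P}^k(\fg(1))\cong\Sym^k(\fg(-1))$ are injective, so specialness ($\ttau_k|_{W^*}\not\equiv 0$) produces a nonzero $\omega_k(Y^*_j)$ of degree $k\geq 1$. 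Either of your routes gives a complete proof at least as short as the paper's.
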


\begin{proof}
Observe that if $\nu$ is the highest weight for $W^*$ then
$F(\Omega_k|_{W^*})$ has highest weight $\nu-s_0\gl_\fq$.
If $F(\Omega_k|_{W^*}) = \C_{-s_0\gl_\fq}$ then $\nu = 0$, and so
the irreducible constituent $W \subset \fg(-r+k)\otimes \fg(r)$
would also have highest weight 0.
As $\gamma$ is the highest weight for $\fg(r)$,
the highest weight of any irreducible constituent
of $\fg(-r+k)\otimes \fg(r)$ is of the form $\gamma + \eta$ with $\eta$  some
weight for $\fg(-r+k)$. 
Thus, the highest weight 0 for $W$ 
must be of the form $0=\gamma + (-\gamma)$. 
However, since only $\fg(-r)$ has weight $-\gamma$,
it cannot be a weight for $\fg(-r+k)$ unless $k=0$.
As $k = 1, \ldots, 2r$ (see Subsection 2.2), this shows that 
$F(\Omega_k|_{W^*}) \neq \C_{-s_0\gl_\fq}$.
\end{proof}

\begin{Cor}\label{CorGVM}
If the $\Omega_k|_{W^*}$ system is conformally invariant on 
the line bundle $\Cal{L}_{s_0}$ then
the generalized Verma module $M_\fq[\C_{-s_0\gl_\fq}]$ is reducible.
\end{Cor}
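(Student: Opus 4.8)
The plan is to exhibit a proper nonzero submodule of $M_\fq[\C_{-s_0\gl_\fq}]$, namely the image of the homomorphism $\varphi_{\Omega_k}$ constructed in (\ref{Eqn823}). First I would recall that, by Proposition \ref{Cor243}(2), whenever the $\Omega_k|_{W^*}$ system is conformally invariant on $\Cal{L}_{s_0}$, the space $F(\Omega_k|_{W^*})$ is a simple $\fl$-submodule of $M_\fq[\C_{-s_0\gl_\fq}]^{\fn}$; hence, as explained in the paragraph preceding Proposition \ref{Prop244}, the inclusion $\iota$ induces the nonzero $\Cal{U}(\fg)$-homomorphism $\varphi_{\Omega_k}\colon M_\fq[F(\Omega_k|_{W^*})] \to M_\fq[\C_{-s_0\gl_\fq}]$. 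The image $\Im(\varphi_{\Omega_k})$ is then a nonzero $\Cal{U}(\fg)$-submodule of $M_\fq[\C_{-s_0\gl_\fq}]$, since $\varphi_{\Omega_k}$ is nonzero (its restriction to the generating subspace $1 \otimes F(\Omega_k|_{W^*})$ is the nonzero map $\iota$).

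Next I would argue that this image is a \emph{proper} submodule. The key point is a weight/PBW comparison: by Proposition \ref{Cor243}(1), the highest weight of $F(\Omega_k|_{W^*})$ is $\nu - s_0\gl_\fq$ where $\nu \neq 0$ is the highest weight of the special constituent $W^*$, and by Proposition \ref{Prop244} we have $F(\Omega_k|_{W^*}) \neq \C_{-s_0\gl_\fq}$. Realizing $M_\fq[\C_{-s_0\gl_\fq}] \cong \Cal{U}(\bar\fn) \otimes \C_{-s_0\gl_\fq}$ via PBW, the generator $1 \otimes 1$ spanning the top $\fl$-isotypic piece (weight $-s_0\gl_\fq$) does not lie in the image: indeed $F(\Omega_k|_{W^*})$ sits inside $\gs(\Sym^k(\fn)) \otimes \C_{-s_0\gl_\fq}$ with $k \geq 1$, so every element of $\Im(\varphi_{\Omega_k})$ lies in $\bigoplus_{j\geq 1}\Cal{U}_j(\bar\fn)\,\bar\fn \otimes \C_{-s_0\gl_\fq}$, the span of PBW monomials of positive degree in $\bar\fn$ — equivalently, $\fn$ acts trivially on $F(\Omega_k|_{W^*})$ while $\fn$ does not annihilate $1 \otimes 1$ in $M_\fq[\C_{-s_0\gl_\fq}]$ (as $\gl_\fq$ is the fundamental weight for $\ga_\fq$, so $\fg(1)$ acts nontrivially on $\bar\fn \otimes 1$). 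Hence $1 \otimes 1 \notin \Im(\varphi_{\Omega_k})$, so the image is proper.

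Combining the two steps, $\Im(\varphi_{\Omega_k})$ is a nonzero proper $\Cal{U}(\fg)$-submodule of $M_\fq[\C_{-s_0\gl_\fq}]$, which is precisely the statement that $M_\fq[\C_{-s_0\gl_\fq}]$ is reducible. I expect the main obstacle to be making the properness argument airtight: one must be careful that $\varphi_{\Omega_k}$ is not surjective, and the cleanest way is the $\fn$-triviality argument — $\fn$ annihilates all of $F(\Omega_k|_{W^*}) \subset M_\fq[\C_{-s_0\gl_\fq}]^{\fn}$ and hence annihilates the entire image only if... no — rather, one observes $1 \otimes 1 \in M_\fq[\C_{-s_0\gl_\fq}]$ generates the whole module but is not killed by $\fn$, whereas every vector of $F(\Omega_k|_{W^*})$ is killed by $\fn$, and $\varphi_{\Omega_k}$ being a $\Cal{U}(\fg)$-map sends $1 \otimes F(\Omega_k|_{W^*})$ into $M_\fq[\C_{-s_0\gl_\fq}]^{\fn}$; since $1 \otimes 1 \notin M_\fq[\C_{-s_0\gl_\fq}]^{\fn}$ and $1 \otimes 1$ would have to be in the $\fl$-span of the image of $1 \otimes F(\Omega_k|_{W^*})$ for surjectivity to force it in, one derives the contradiction directly from Proposition \ref{Prop244}. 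This is essentially immediate once the bookkeeping of weights and the $\fn$-action is set up.
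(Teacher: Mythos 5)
Your overall strategy---exhibiting $\Im(\varphi_{\Omega_k})$ as a nonzero proper submodule of $M_\fq[\C_{-s_0\gl_\fq}]$---is exactly what the paper's one-line proof (``this immediately follows from (\ref{Eqn823}) and Proposition \ref{Prop244}'') is shorthand for, and the ``nonzero'' half of your argument is fine. The genuine problem is in the properness step: you assert that $\fn$ does not annihilate $1\otimes 1$ and that $1\otimes 1\notin M_\fq[\C_{-s_0\gl_\fq}]^{\fn}$. Both statements are false. By construction $\C_{-s_0\gl_\fq}$ is the one-dimensional $\fq$-module on which $\fn$ acts trivially, so $1\otimes 1$ is the highest weight vector of $M_\fq[\C_{-s_0\gl_\fq}]$ and is annihilated by $\fn$. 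Hence the dichotomy ``$F(\Omega_k|_{W^*})$ is killed by $\fn$ but $1\otimes 1$ is not,'' which you offer as the cleanest route, cannot separate $1\otimes 1$ from the image, and the final paragraph of your proposal does not yield a contradiction.

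Your first (degree/PBW) argument reaches the right conclusion but is missing a step. From $F(\Omega_k|_{W^*})\subset \gs(\Sym^k(\bar{\fn}))\otimes \C_{-s_0\gl_\fq}$ with $k\geq 1$ it does not follow immediately that $\Cal{U}(\fg)\cdot F(\Omega_k|_{W^*})$ stays in positive $\bar{\fn}$-degree: the action of $\fn$ on $\Cal{U}(\bar{\fn})\otimes \C_{-s_0\gl_\fq}$ lowers that degree, so a priori it could produce a multiple of $1\otimes 1$. The missing ingredient is precisely the $\fn$-invariance of $F(\Omega_k|_{W^*})$ from Proposition \ref{Cor243}(2): since $\fn\cdot F(\Omega_k|_{W^*})=0$, the PBW decomposition gives $\Cal{U}(\fg)\cdot F(\Omega_k|_{W^*})=\Cal{U}(\bar{\fn})\,\Cal{U}(\fl)\cdot F(\Omega_k|_{W^*})$, and since $\Cal{U}(\fl)$ preserves the eigenvalue of the grading element $H_\fq$ while $\Cal{U}(\bar{\fn})$ only lowers it, every vector in the image has $H_\fq$-eigenvalue at most $-s_0\gl_\fq(H_\fq)-k$, strictly below the eigenvalue $-s_0\gl_\fq(H_\fq)$ of $1\otimes 1$. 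Since $1\otimes 1$ generates $M_\fq[\C_{-s_0\gl_\fq}]$, the image is proper. (Equivalently: by Proposition \ref{Prop244} the module contains a nonzero $\fn$-invariant weight vector of weight different from the highest weight $-s_0\gl_\fq$, and such a vector always generates a proper submodule.) With this repair your proof is correct and coincides with the paper's.
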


\begin{proof}
This immediately follows from (\ref{Eqn823}) and Proposition \ref{Prop244}.
\end{proof}




\vsp

The goal of this paper is  to determine whether or not the maps 
$\varphi_{\Omega_k}$ are standard 
in the quasi-Heisenberg setting. 
To do so, it is convenient to parametrize generalized Verma modules 
by their infinitesimal characters. Therefore, for the rest of this paper,
we write 
\begin{equation}\label{Eqn:InfChar1}
M_\fq[F(\Omega_k|_{W^*})] = M_\fq(\nu-s_0\gl_\fq + \rho)
\end{equation}
and
\begin{equation}\label{Eqn:InfChar2}
M_\fq[\C_{-s_0\gl_\fq}] = M_\fq(-s_0\gl_\fq+\rho),
\end{equation}
where $\rho$ is half the sum of the positive roots.
Then (\ref{Eqn823}) is expressed by
\begin{align} \label{Eqn:GVM2}
M_\fq(\nu-s_0\gl_\fq + \rho) &\stackrel{\varphi_{\Omega_k}}{\to} M_\fq(-s_0\gl_\fq + \rho)\\
u\otimes v &\mapsto u\cdot \iota(v) \nonumber
\end{align}
with $v =  \omega_k(Y^*) \otimes 1$.

\section{Standard maps between generalized Verma modules} 
\label{SS:Hom2}

The aim of this sections is to discuss standard maps 
between generalized Verma modules and 
homomorphisms between (ordinary) Verma modules.
In particular, we specialize a result of Lepowsky to the present situation.

We start with recalling the notion of standard maps. 
For $\eta \in \fh^*$, let $M(\eta)$ be the (ordinary) Verma module 
with highest weight $\eta-\rho$.
Write
\begin{equation*}
\mathbf{P}^+_{\fl} = \{ \zeta \in \fh^* \;|\; \IP{\zeta}{\ga^{\vee}} \in 1 + \Z_{\geq 0} 
\text{ for all } \ga \in \Pi(\fl) \}.
\end{equation*}

For $\eta, \zeta \in \mathbf{P}^+_{\fl}$, 
suppose that there exists a non-zero $\Cal{U}(\fg)$-homomorphism 
$\varphi: M(\eta) \to M(\zeta)$.
If $K(\eta)$ is the kernel of the canonical 
projection map $\text{pr}_\eta: M(\eta) \to M_\fq(\eta)$ then, 
by Proposition 3.1 in \cite{Lepowsky77},  
we have $\gp(K(\eta)) \subset K(\zeta)$. 
Thus the map $\varphi$ induces
a $\Cal{U}(\fg)$-homomorphism $\varphi_{std}:M_\fq(\eta) \to M_\fq(\zeta)$
so that the diagram
\begin{equation*}\label{Eqn:Diagram}
\xymatrix{ 
M(\eta) \ar[r]^\varphi \ar[d]_{\text{pr}_\eta} &
M(\zeta) \ar[d]^{\text{pr}_\zeta}\\
M_\fq(\eta) \ar[r]^{\varphi_{std}}&
M_\fq(\zeta)
}
\end{equation*}
commutes.
The map $\varphi_{std}$ is called the \textbf{standard map} 
from $M_\fq(\eta)$ to $M_\fq(\zeta)$. These maps were
first studied by Lepowsky (\cite{Lepowsky77}).
As $\dim\Hom_{\Cal{U}(\fg)}(M(\eta), M(\zeta)) \leq 1$, 
the standard maps $\varphi_{std}$ are uniquely determined up to 
scalar multiples. 
Note that the standard maps $\varphi_{std}$ could be zero
and also that not every homomorphism between generalized Verma modules 
is standard. Any homomorphisms that are not standard are called non-standard
maps.

\vsp
If $\nu = -(1-s_0)\ga_\fq$ in (\ref{Eqn:GVM2}) with $1-s_0  \in 1 + \Z_{\geq 0}$ then
one can show that the standard map $\varphi_{std}$
from $M_\fq(-(1-s_0)\ga_\fq-s_0\gl_\fq + \rho)$ to $M_\fq(-s_0\gl_\fq+\rho)$ 
is non-zero by computing $\varphi_{std}(1\otimes v^+)$, where $1\otimes v^+$
is a highest weight vector of $M_\fq(-(1-s_0)\ga_\fq-s_0\gl_\fq + \rho)$
with weight $-(1-s_0)\ga_\fq-s_0\gl_\fq$.
To prove it, we will use the following well-known result.
(See for example \cite[Proposition 1.4]{Hum08}.)

\begin{Prop}\label{Prop:Map}
Given $\gl \in \fh^*$ and $\ga \in \Pi$, 
suppose that $n = \IP{\gl+\rho}{\cga} \in 1 + \Z_{\geq 0}$.
If $1\otimes v^+$ is a highest weight vector of weight $\gl$ in $M(\gl+\rho)$ 
then $X_{-\ga}^n \cdot (1\otimes v^+)$ is a highest weight vector of weight
$-n\ga + \gl$.
\end{Prop}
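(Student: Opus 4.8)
The plan is to verify directly that $v := X_{-\ga}^n \cdot (1\otimes v^+)$ is a non-zero highest weight vector of the claimed weight. The weight statement is immediate: since $1\otimes v^+$ has weight $\gl$ and each application of $X_{-\ga}$ lowers the weight by $\ga$, the vector $v$ has weight $\gl - n\ga$ (or is zero). The content is therefore (i) that $v \neq 0$, and (ii) that $X_\gb \cdot v = 0$ for every $\gb \in \Pi$. For (i), one uses the PBW theorem: writing $M(\gl+\rho) = \Cal{U}(\fnbar)\otimes v^+$ as a free $\Cal{U}(\fnbar)$-module of rank one, the element $X_{-\ga}^n$ is a non-zero element of $\Cal{U}(\fnbar)$, hence $X_{-\ga}^n\otimes v^+ \neq 0$.

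For (ii), the key reduction is the standard $\f{sl}(2)$-computation: for $\gb \in \Pi$, one has the identity $X_\gb \cdot (X_{-\ga}^n \cdot w) = X_{-\ga}^n \cdot (X_\gb \cdot w) + (\text{correction terms})$ obtained by commuting $X_\gb$ past the powers of $X_{-\ga}$. When $\gb \neq \ga$, the bracket $[X_\gb, X_{-\ga}]$ lies in a root space $\fg_{\gb-\ga}$ which is either zero (as $\gb - \ga$ is not a root when $\gb,\ga$ are distinct simple roots) so $X_\gb$ commutes with $X_{-\ga}$ entirely; thus $X_\gb\cdot v = X_{-\ga}^n\cdot(X_\gb\cdot v^+) = 0$ since $v^+$ is a highest weight vector. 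When $\gb = \ga$, one invokes the $\f{sl}(2)$-triple $\{X_\ga, X_{-\ga}, H_\ga\}$ with $H_\ga = [X_\ga, X_{-\ga}]$, for which the commutation relation reads
\begin{equation*}
X_\ga \cdot X_{-\ga}^n \cdot v^+ = X_{-\ga}^n \cdot X_\ga \cdot v^+ + n\,X_{-\ga}^{n-1}\cdot\big(H_\ga - (n-1)\big)\cdot v^+ = n\big(\IP{\gl}{\cga} - (n-1)\big) X_{-\ga}^{n-1}\cdot v^+,
\end{equation*}
using $X_\ga\cdot v^+ = 0$ and $H_\ga\cdot v^+ = \IP{\gl}{\cga} v^+$. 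Since $n = \IP{\gl+\rho}{\cga}$ and $\IP{\rho}{\cga} = 1$, we get $\IP{\gl}{\cga} = n - 1$, so the coefficient $n(\IP{\gl}{\cga} - (n-1)) = n \cdot 0 = 0$, hence $X_\ga \cdot v = 0$.

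I expect the main (minor) obstacle to be bookkeeping in the $\f{sl}(2)$-commutator identity — getting the coefficient $n(\IP{\gl}{\cga}-(n-1))$ exactly right and matching the normalization $\IP{\rho}{\cga}=1$ that makes it vanish precisely when $n = \IP{\gl+\rho}{\cga}$. This is entirely routine, and indeed the cited reference \cite[Proposition 1.4]{Hum08} supplies it, so the proof amounts to assembling these observations; there is no real difficulty.
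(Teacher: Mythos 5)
Your proof is correct; the paper itself offers no proof of this proposition, simply citing \cite[Proposition 1.4]{Hum08}, and your argument (PBW freeness for non-vanishing, commutativity of $X_\gb$ with $X_{-\ga}$ for $\gb\neq\ga$ since $\gb-\ga$ is not a root, and the $\f{sl}(2)$ identity $[X_\ga, X_{-\ga}^n]=nX_{-\ga}^{n-1}(H_\ga-(n-1))$ together with $\IP{\gl}{\cga}=n-1$) is exactly the standard one supplied by that reference.
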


Observe that, by (\ref{Eqn:InfChar1}) and (\ref{Eqn:InfChar2}),
we have $M_\fq(\nu-s_0\gl_\fq+\rho) = 
\Cal{U}(\fg) \otimes_{\Cal{U}(\fq)}F(\Omega_k|_{W^*})$ and 
$M_\fq(-s_0\gl_\fq+\rho) = \Cal{U}(\fg) \otimes_{\Cal{U}(\fq)} \C_{-s_0\gl_\fq}$.
Thus if $v_h$ and $1_{-s_0\gl_\fq}$ are highest weight vectors 
for $F(\Omega_k|_{W^*})$ and $\C_{-s_0\gl_\fq}$, respectively, 
then $1 \otimes v_h$ and $1 \otimes 1_{-s_0\gl_\fq}$
are highest weight vectors for $M_\fq(\nu-s_0\gl_\fq+\rho)$
with highest weight $\nu-s_0\gl_\fq$ and for $M_\fq(-s_0\gl_\fq+\rho)$
with highest weight $-s_0\gl_\fq$, respectively.

\begin{Prop} \label{Prop:Std2}
If $1-s_0 \in 1 + \Z_{\geq 0}$ then
the standard map $\varphi_{std}$ from 
$M_\fq(-(1-s_0)\ga_\fq-s_0\gl_\fq + \rho)$ to $M_\fq(-s_0\gl_\fq + \rho)$ 
maps 
\begin{equation*}
1 \otimes v_h \mapsto
cX_{-\ga_\fq}^{1-s_0} \otimes 1_{-s_0\gl_\fq} \neq 0
\end{equation*}
for some non-zero constant $c$. 
In particular, the standard map $\varphi_{std}$ is non-zero.
\end{Prop}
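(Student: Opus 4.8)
The plan is to exploit the commuting diagram that defines $\varphi_{std}$ together with Proposition \ref{Prop:Map}. First I would set $\gl = -(1-s_0)\ga_\fq - s_0\gl_\fq$, so that the source generalized Verma module is $M_\fq(\gl+\rho)$, and observe that the source (ordinary) Verma module $M(\gl+\rho)$ has highest weight $\gl$. The key numerical input is that $n := \IP{\gl+\rho}{\ga_\fq^\vee}$ equals $1-s_0$. To see this, I would compute $\IP{-(1-s_0)\ga_\fq}{\ga_\fq^\vee} = -2(1-s_0)$, then $\IP{-s_0\gl_\fq}{\ga_\fq^\vee} = -s_0$ since $\gl_\fq$ is the fundamental weight dual to $\ga_\fq$, and finally $\IP{\rho}{\ga_\fq^\vee} = 1$; summing gives $-2(1-s_0) - s_0 + 1 = 1 - s_0$, as needed. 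Since $1 - s_0 \in 1 + \Z_{\geq 0}$ by hypothesis, Proposition \ref{Prop:Map} applies and tells us that $X_{-\ga_\fq}^{1-s_0} \cdot (1 \otimes v^+)$ is a highest weight vector in $M(\gl+\rho)$ of weight $-(1-s_0)\ga_\fq + \gl = -2(1-s_0)\ga_\fq - s_0\gl_\fq$.

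Next I would identify this target weight with the highest weight $-s_0\gl_\fq$ of $M_\fq(-s_0\gl_\fq+\rho)$ modulo the root lattice contribution that gets killed by the projection, i.e. observe that $-2(1-s_0)\ga_\fq - s_0\gl_\fq$ differs from $-s_0\gl_\fq$ by a multiple of $\ga_\fq$, which is consistent with $X_{-\ga_\fq}^{1-s_0} \otimes 1_{-s_0\gl_\fq}$ being a weight vector in $M_\fq(-s_0\gl_\fq + \rho)$ — wait, I should be careful: in $M(\zeta)$ with $\zeta = -s_0\gl_\fq + \rho$ the highest weight is $-s_0\gl_\fq$, and $X_{-\ga_\fq}^{1-s_0}\cdot(1\otimes 1_{-s_0\gl_\fq})$ has weight $-(1-s_0)\ga_\fq - s_0\gl_\fq$, which does NOT match $-2(1-s_0)\ga_\fq - s_0\gl_\fq$. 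So the correct bookkeeping is: the nonzero Verma homomorphism $\varphi: M(\gl+\rho) \to M(-s_0\gl_\fq+\rho)$ (which exists and is unique up to scalar precisely because of the dominant-integral condition on $\ga_\fq^\vee$, by the theory of Verma module homomorphisms / the BGG theorem in the rank-one case) sends the highest weight vector $1\otimes v^+$ of $M(\gl+\rho)$ to $c\, X_{-\ga_\fq}^{1-s_0}\cdot(1\otimes 1_{-s_0\gl_\fq})$ for some nonzero $c$, since weights match: $\gl = -(1-s_0)\ga_\fq - s_0\gl_\fq = -(1-s_0)\ga_\fq + (-s_0\gl_\fq)$.

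Then I would push this through the defining commuting square of $\varphi_{std}$: applying $\text{pr}_{-s_0\gl_\fq+\rho}$ to $\varphi(1\otimes v^+)$ gives $\varphi_{std}(\text{pr}_{\gl+\rho}(1\otimes v^+)) = \varphi_{std}(1\otimes v_h)$, where I use that $\text{pr}_{\gl+\rho}$ sends the Verma highest weight vector to the generalized Verma highest weight vector $1\otimes v_h$ (up to scalar, which I can absorb). On the other side, $\text{pr}_{-s_0\gl_\fq+\rho}(c\,X_{-\ga_\fq}^{1-s_0}\cdot(1\otimes 1_{-s_0\gl_\fq})) = c\,X_{-\ga_\fq}^{1-s_0}\otimes 1_{-s_0\gl_\fq}$ in $M_\fq(-s_0\gl_\fq+\rho)$, identifying the latter with $\Cal{U}(\bar\fn)\otimes \C_{-s_0\gl_\fq}$. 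The final point is that this image is nonzero: since $\ga_\fq \notin \gD(\fl)$ (it is the distinguished simple root defining $\fq$), $X_{-\ga_\fq} \in \bar\fn$ and $X_{-\ga_\fq}^{1-s_0}$ is a nonzero element of $\Cal{U}(\bar\fn)$, hence nonzero in the free module $\Cal{U}(\bar\fn)\otimes\C_{-s_0\gl_\fq} \cong M_\fq(-s_0\gl_\fq+\rho)$. This proves both the displayed formula and that $\varphi_{std}\neq 0$.

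The main obstacle I anticipate is bookkeeping rather than conceptual depth: making sure the highest weight vector $1\otimes v_h$ of $M_\fq(\nu - s_0\gl_\fq+\rho)$ really does match $\text{pr}_{\gl+\rho}$ of the ordinary Verma highest weight vector when $\nu = -(1-s_0)\ga_\fq$ — i.e. that the one-dimensionality of the relevant $\fl$-module forces $F(\Omega_k|_{W^*})$ to be the trivial-weight-shift situation in this special case — and, relatedly, verifying that the hypothesized nonzero Verma homomorphism $\varphi$ indeed exists (this is the standard rank-one BGG statement: $\IP{\gl+\rho}{\ga_\fq^\vee} \in \Z_{>0}$ guarantees $M(\ga_\fq\cdot(\gl+\rho)) \hookrightarrow M(\gl+\rho)$, and $\ga_\fq\cdot(\gl+\rho) = \gl+\rho - (1-s_0)\ga_\fq = -s_0\gl_\fq + \rho$). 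I would cite the standard reference for Verma module embeddings for this existence claim and keep the weight arithmetic explicit but terse.
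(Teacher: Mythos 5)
Your overall route is the paper's: produce the singular vector $X_{-\ga_\fq}^{1-s_0}\cdot(1\otimes 1_{-s_0\gl_\fq})$ in the \emph{target} Verma module via Proposition \ref{Prop:Map}, read off the resulting map $\varphi$ of ordinary Verma modules, push it through the commuting square defining $\varphi_{std}$, and conclude nonvanishing from the PBW picture because $\ga_\fq\notin\Pi(\fl)$. The middle of your write-up states exactly this. However, the numerical justification you offer is wrong in a way you only half-repair. With $\gl=-(1-s_0)\ga_\fq-s_0\gl_\fq$ one has
\[
\IP{\gl+\rho}{\ga_\fq^\vee}=-2(1-s_0)-s_0+1=s_0-1=-(1-s_0),
\]
not $1-s_0$; since $1-s_0\geq 1$ this pairing is negative, so Proposition \ref{Prop:Map} cannot be applied at the source weight $\gl+\rho$ at all, and your first paragraph's conclusion (a singular vector of weight $-2(1-s_0)\ga_\fq-s_0\gl_\fq$ inside $M(\gl+\rho)$) is vacuous. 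You notice the resulting weight mismatch and pivot to the correct statement, but your closing paragraph re-asserts the same error: the rank-one condition must be checked at the \emph{target}, namely $\IP{-s_0\gl_\fq+\rho}{\ga_\fq^\vee}=1-s_0\in 1+\Z_{\geq 0}$ together with $s_{\ga_\fq}(-s_0\gl_\fq+\rho)=\gl+\rho$; your displayed identity $\ga_\fq\cdot(\gl+\rho)=\gl+\rho-(1-s_0)\ga_\fq=-s_0\gl_\fq+\rho$ is false as written, since the middle expression equals $-2(1-s_0)\ga_\fq-s_0\gl_\fq+\rho$.

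Once that single computation is placed at the right weight, the rest of your argument is sound and coincides with the paper's proof: Proposition \ref{Prop:Map} applied to $M(-s_0\gl_\fq+\rho)$ with $n=1-s_0$ gives the highest weight vector $X_{-\ga_\fq}^{1-s_0}\otimes 1_{-s_0\gl_\fq}$ of weight $\gl$, hence the (unique up to scalar) nonzero map $\varphi$ with $\varphi(1\otimes v^+)=cX_{-\ga_\fq}^{1-s_0}\otimes 1_{-s_0\gl_\fq}$, and the projection $\text{pr}_{-s_0\gl_\fq+\rho}$ does not annihilate this vector because $X_{-\ga_\fq}\in\bar\fn$. The paper phrases the final factorization through the universal property of $M_\fq(\gl+\rho)$ in the relative category $\Cal{O}^{\fq}$ rather than through the commuting square, but that is the same step. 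So the gap is not conceptual; it is the misplacement of the dominance condition, which as written invalidates both your opening computation and your stated existence argument for $\varphi$.
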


\begin{proof}
Write $n=1-s_0$ and denote by 
$1\otimes 1_{-n\ga_\fq-s_0\gl_\fq}$ a highest weight vector
for $M(-n\ga_\fq-s_0\gl_\fq + \rho)$ with highest weight $-n\ga_\fq-s_0\gl_\fq$.
Observe that since
$\IP{\gl_\fq}{\cga_0}=\IP{\rho}{\cga_0} = 1$, we have
$n= 1-s_0 = \IP{-s_0\gl_\fq+\rho}{\cga_0}$.
Hence 
$-n\ga_\fq-s_0\gl_\fq+\rho = s_{\ga_\fq}(-s_0\gl_\fq + \rho)$.
By hypothesis, we have $n=1-s_0 \in 1 + \Z_{\geq 0}$.
It then follows from Proposition \ref{Prop:Map} that
the map $\varphi: 
M_\fq(-n\ga_\fq-s_0\gl_\fq + \rho) \to M_\fq(-s_0\gl_\fq + \rho)$ 
is given by
\begin{equation*}
\varphi(1\otimes 1_{-n\ga_\fq-s_0\gl_\fq}) 
= c X_{-\ga_\fq}^n \otimes 1
\end{equation*}
with $c \neq 0$.
As $\ga_\fq \in \Pi \backslash \Pi(\fl)$, 
if $\text{pr}_{-s_0\gl_\fq+\rho}: 
M(-s_0\gl_\fq + \rho) \to M_\fq(-s_0\gl_\fq + \rho)$ is the 
canonical projection map
then $\text{pr}_{-s_0\gl_\fq+\rho}(X_{-\ga_\fq}^n \otimes 1) \neq 0$.
Then the universal property of $M_\fq(-n\ga_\fq-s_0\gl_\fq+\rho)$ 
in the relative category $\Cal{O}^\fq$ 
(see for example Section 9.4 in \cite{Hum08})
guarantees that 
$\text{pr}_{-s_0\gl_\fq+\rho} \circ \varphi$ factors through a non-zero map
$\varphi_{std}: M_\fq(-n\ga_\fq-s_0\gl_\fq+\rho) \to M_\fq(-s_0\gl_\fq + \rho)$.
\end{proof}

\vsp

In order to determine if $\varphi_{std}$ is non-zero in a more 
general setting, we will use the following theorem by Lepowsky.
As usual, if there is a non-zero $\Cal{U}(\fg)$-homomorphism
from $M(\eta)$ into $M(\zeta)$ then we write $M(\eta) \subset M(\zeta)$.

\begin{Thm}\cite[Proposition 3.3]{Lepowsky77}\label{Thm:Std}
Let $\eta, \zeta \in \mathbf{P}^+_{\fl}$, and assume that $M(\eta) \subset M(\zeta)$. 
Then the standard map $\varphi_{std}$ from $M_\fq(\eta)$ to $M_\fq(\zeta)$ is zero
if and only if $M(\eta) \subset M(s_\ga \zeta)$ for some $\ga \in \Pi(\fl)$.
\end{Thm}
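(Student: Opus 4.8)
The statement to be proved is Lepowsky's criterion: for $\eta,\zeta \in \mathbf{P}^+_\fl$ with $M(\eta)\subset M(\zeta)$, the standard map $\varphi_{std}\colon M_\fq(\eta)\to M_\fq(\zeta)$ vanishes if and only if $M(\eta)\subset M(s_\ga\zeta)$ for some $\ga\in\Pi(\fl)$. Since this is quoted verbatim from \cite[Proposition 3.3]{Lepowsky77}, the ``proof'' here is really a matter of recalling the argument; let me lay out the approach I would take to reconstruct it.

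The plan is to work with the BGG-type resolution picture. First I would recall the structure of the kernel $K(\zeta)$ of $\mathrm{pr}_\zeta\colon M(\zeta)\to M_\fq(\zeta)$: by Lepowsky's analysis (Proposition 3.1 and the surrounding discussion in \cite{Lepowsky77}), $K(\zeta)$ is generated as a $\Cal U(\fg)$-module by the highest weight vectors $X_{-\ga}^{\,m_\ga}\cdot v_\zeta^+$, where $\ga$ ranges over $\Pi(\fl)$ and $m_\ga=\IP{\zeta}{\ga^\vee}\in 1+\Z_{\geq 0}$ (this is where $\zeta\in\mathbf{P}^+_\fl$ is used); equivalently $K(\zeta)=\sum_{\ga\in\Pi(\fl)}M(s_\ga\zeta)$ inside $M(\zeta)$. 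The same holds for $K(\eta)$. Next, given the embedding $\varphi\colon M(\eta)\hookrightarrow M(\zeta)$ (unique up to scalar since $\dim\Hom\leq 1$), the commuting square in Section \ref{SS:Hom2} shows $\varphi_{std}=0$ precisely when $\mathrm{pr}_\zeta\circ\varphi=0$, i.e. when $\Im(\varphi)\subset K(\zeta)$, i.e. when $M(\eta)\subset K(\zeta)$ as submodules of $M(\zeta)$.

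So the crux reduces to: $M(\eta)\subset \sum_{\ga\in\Pi(\fl)}M(s_\ga\zeta)$ if and only if $M(\eta)\subset M(s_\ga\zeta)$ for some single $\ga\in\Pi(\fl)$. The forward direction is the substantive one. Here I would invoke the key fact that a Verma module has a \emph{unique} irreducible submodule (its socle is simple), or more to the point, that the sum of submodules each isomorphic to a highest weight module of a fixed highest weight $\eta-\rho$ behaves well under the partial order on weights: if $M(\eta)$ — which is generated by a single weight vector of weight $\eta-\rho$ — lies in the sum of the $M(s_\ga\zeta)$, then projecting to the $(\eta-\rho)$-weight space and using that $M(\eta)$ is generated by that vector, together with the fact that distinct $s_\ga\zeta$ are not $\geq$ each other in general, forces the generator to already lie in one summand. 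The cleanest route is the one Lepowsky uses: compare with the structure of the dual (contragredient) modules, or use that $\Hom(M(\eta),M(\zeta)/M(s_\ga\zeta))$ controls things and that $M(\zeta)/\sum M(s_\ga\zeta)=M_\fq(\zeta)$ has the $\fl$-integrality that $M(s_\ga\zeta)$ lacks at $\ga$. The reverse direction is immediate: if $M(\eta)\subset M(s_\ga\zeta)\subset K(\zeta)$ then $\mathrm{pr}_\zeta\circ\varphi=0$ so $\varphi_{std}=0$.

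The main obstacle is the forward implication of the last equivalence — showing that containment in a \emph{sum} of the $M(s_\ga\zeta)$ already implies containment in one of them. This is not formal; it rests on Lepowsky's description of $K(\zeta)$ and on a weight-combinatorial argument (essentially that the generator of $M(\eta)$, being a single weight vector, cannot ``spread'' across incomparable Verma submodules). Everything else — the reduction via the commuting square, the identification of $K(\zeta)$ with $\sum_\ga M(s_\ga\zeta)$, and the reverse direction — is bookkeeping that follows directly from Proposition 3.1 of \cite{Lepowsky77} and the definitions in Section \ref{SS:Hom2}. Since the result is cited, in the paper itself I would simply state that it is \cite[Proposition 3.3]{Lepowsky77} and refer the reader there for the proof.
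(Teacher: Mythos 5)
The paper offers no proof of Theorem \ref{Thm:Std}: it is quoted verbatim from \cite[Proposition 3.3]{Lepowsky77}, so there is no internal argument to measure yours against. Your reduction is the correct one and matches Lepowsky's setup: $K(\zeta)=\sum_{\ga\in\Pi(\fl)}M(s_\ga\zeta)$ because $K(\zeta)$ is generated by the singular vectors $X_{-\ga}^{\IP{\zeta}{\ga^\vee}}\cdot v_\zeta^+$, the square in Section \ref{SS:Hom2} commutes, and $\varphi_{std}=0$ exactly when the image of the embedding $M(\eta)\hookrightarrow M(\zeta)$ lands in $K(\zeta)$; the reverse implication of the theorem is then immediate.

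The gap is exactly where you flag it, and the mechanisms you float would not close it. Neither the simplicity of the socle of a Verma module nor projecting the generator onto its weight space does the job: writing the singular vector $v_\eta=\sum_{\ga}u_\ga$ with $u_\ga\in M(s_\ga\zeta)$ of weight $\eta-\rho$ gives no control, since the individual $u_\ga$ need not be singular and the sum is far from direct, so incomparability of the $s_\ga\zeta$ is beside the point. The argument that does work is via composition factors: $K(\zeta)$ is a quotient of $\bigoplus_{\ga\in\Pi(\fl)}M(s_\ga\zeta)$, so if $M(\eta)\subset K(\zeta)$ then $L(\eta)$ is a composition factor of some $M(s_\ga\zeta)$, and the BGG--Verma theorem (the equivalence of (1) and (2) in Theorem \ref{Thm:Link}) converts this back into an embedding $M(\eta)\subset M(s_\ga\zeta)$. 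With that substitution your outline becomes a complete proof; as the paper does, though, simply citing \cite{Lepowsky77} is the appropriate course here.
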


Theorem \ref{Thm:Std} reduces the existence problem of the non-zero standard
map $\varphi_{std}$ between generalized Verma modules to that of 
the non-zero map between appropriate Verma modules. It is well known 
when a non-zero $\Cal{U}(\fg)$-homomorphism between Verma modules exists.
To describe the condition efficiently,
we first introduce the definition of a \emph{link} of two weights.

\begin{Def}\label{Def:Link}(Bernstein-Gelfand-Gelfand)
Let $\gl, \gd \in \fh^*$ and $\gb_1, \ldots, \gb_t \in \gD^+$. Set $\gd_0 = \gd$
and $\gd_i = s_{\gb_i} \cdots s_{\gb_1}\gd$ for $1 \leq i \leq t$.
We say that the sequence $(\gb_1, \ldots, \gb_t)$ \textbf{links} $\gd$ to $\gl$ if 
\begin{enumerate}
\item[(1)] $\gd_t = \gl$ and
\item[(2)] $\IP{\gd_{i-1}}{\gb_i^{\vee}} \in \Z_{\geq 0}$ for $1\leq i \leq t$.
\end{enumerate}
\end{Def}

\begin{Thm}\label{Thm:Link}(BGG-Verma)
Let $\gl, \gd \in \fh^*$. The following conditions are equivalent:
\begin{enumerate}
\item[(1)] $M(\gl) \subset M(\gd)$
\item[(2)] $L(\gl)$ is a composition factor of $M(\gd)$
\item[(3)] There exists a sequence $(\gb_1, \ldots, \gb_t)$ with $\gb_i \in \gD^+$
that links $\gd$ to $\gl$,
\end{enumerate}
where $L(\gl)$ is the unique irreducible quotient of $M(\gl)$.
\end{Thm}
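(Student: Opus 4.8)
The statement is the classical theorem of Bernstein--Gelfand--Gelfand (together with Verma's embedding theorem), and my plan is not to reprove it from scratch but to record the structure of the standard argument and to cite the literature (e.g.\ Chapters~4 and~5 of \cite{Hum08}) for the one substantial step; concretely, I would establish the cyclic chain of implications $(3)\Rightarrow(1)\Rightarrow(2)\Rightarrow(3)$.

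For $(3)\Rightarrow(1)$ the key input is the elementary embedding lemma: for $\gb\in\gD^+$ and $\mu\in\fh^*$ with $n:=\IP{\mu}{\gb^{\vee}}\in\Z_{\geq 0}$ there is a non-zero $\Cal{U}(\fg)$-homomorphism $M(s_\gb\mu)\hookrightarrow M(\mu)$, and, moreover, any non-zero homomorphism between Verma modules is injective (since $M(\mu)$ is free over the enveloping algebra of the nilradical opposite to $\fu$, which has no zero-divisors). When $\gb$ is simple and $n\geq 1$ this is Proposition \ref{Prop:Map}; the case $n=0$ is the identity map; and the remaining case of a non-simple positive root is Verma's theorem (see Chapter~4 of \cite{Hum08}). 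Applying this along a linking sequence $(\gb_1,\dots,\gb_t)$ as in Definition \ref{Def:Link}, condition~(2) there (together with $\gd_i=\gd_{i-1}-\IP{\gd_{i-1}}{\gb_i^{\vee}}\gb_i$) gives embeddings $M(\gd_i)\hookrightarrow M(\gd_{i-1})$ for $1\leq i\leq t$, and composing them yields $M(\gl)=M(\gd_t)\subset\cdots\subset M(\gd_0)=M(\gd)$, which is~(1).

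The implication $(1)\Rightarrow(2)$ is immediate: an embedding $M(\gl)\hookrightarrow M(\gd)$ realizes $M(\gl)$ as a submodule of $M(\gd)$, so every composition factor of $M(\gl)$ --- in particular the unique irreducible quotient $L(\gl)$ --- occurs in a composition series of $M(\gd)$.

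The hard part will be $(2)\Rightarrow(3)$; this is the substance of the BGG theorem, and the place where I would appeal to the literature rather than to the facts recalled here. The plan is as follows. First, by Harish-Chandra's description of central characters, $[M(\gd):L(\gl)]\neq 0$ forces $M(\gl)$ and $M(\gd)$ to share an infinitesimal character, hence $\gl=w\gd$ for some element $w$ of the Weyl group of $\fg$ (in the present $\rho$-shifted parametrization the dot-action of the Weyl group is the ordinary linear action on $\fh^*$). Next, by means of translation functors one reduces to the case in which $\gd$ is regular and dominant, where the claim becomes the combinatorial assertion that nonvanishing of $[M(\gd):L(w\gd)]$ implies the existence of a chain of reflections linking $\gd$ to $w\gd$ in the sense of Definition \ref{Def:Link} --- equivalently, that $w$ exceeds the identity in the partial order generated by such chains. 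This last assertion --- the strong linkage principle --- is proved in the original work of Bernstein--Gelfand--Gelfand via the geometry of their resolution of the finite-dimensional modules together with a Kostant-partition-function computation, and in later treatments (e.g.\ Chapter~5 of \cite{Hum08}) via the Jantzen sum formula; in either case it is genuinely non-formal, and it is the only ingredient not already available from the material developed in this paper.
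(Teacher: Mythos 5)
Your outline is correct, and it matches the paper's treatment of this statement: the paper states the BGG--Verma theorem as a classical result with no proof of its own, exactly as you do by reducing everything to the standard implications and citing the literature (Verma's embedding theorem and the BGG linkage principle, as in Chapters 4--5 of \cite{Hum08}) for the substantive direction $(2)\Rightarrow(3)$. The easy implications $(3)\Rightarrow(1)\Rightarrow(2)$ are handled correctly, including the $n=0$ case of the linking condition and the injectivity of non-zero maps between Verma modules.
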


Observe that if 
there is a non-zero $\Cal{U}(\fg)$-homomorphism 
(not necessarily standard)
from $M_\fq(\eta)$ to $M_\fq(\zeta)$ 
then $M(\eta) \subset M(\zeta)$.
By taking into account Theorem \ref{Thm:Link} and this observation,
in our setting, Theorem \ref{Thm:Std} is equivalent to the following 
proposition.

\begin{Prop}\label{Prop:Std}
Let $M_\fq(\nu-s_0\gl_\fq + \rho)$ and $M_\fq(-s_0\gl_\fq + \rho)$ be
the generalized Verma modules in (\ref{Eqn:GVM2}).
Then the standard map from $M_\fq(\nu-s_0\gl_\fq + \rho)$ 
to $M_\fq(-s_0\gl_\fq + \rho)$ is zero if and only if 
there exists $\ga \in \Pi(\fl)$ so that $-\ga - s_0\gl_\fq +\rho$
is linked to $\nu-s_0\gl_\fq +\rho$.
\end{Prop}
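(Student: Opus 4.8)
The plan is to translate Theorem~\ref{Thm:Std} into the present setting by identifying the weights involved, using Theorem~\ref{Thm:Link} to convert the containment of Verma modules into the combinatorial notion of a link. First I would set $\eta = \nu - s_0\gl_\fq + \rho$ and $\zeta = -s_0\gl_\fq + \rho$ and check that both lie in $\mathbf{P}^+_{\fl}$: for $\zeta$ this follows because $\IP{\gl_\fq}{\ga^\vee}=0$ and $\IP{\rho}{\ga^\vee}=1$ for all $\ga \in \Pi(\fl)$; for $\eta$ it follows because $\nu$ is the highest weight of the $\fl$-module $F(\Omega_k|_{W^*})$ (equivalently of $W^*$), hence $\IP{\nu}{\ga^\vee}\in \Z_{\geq 0}$ for $\ga \in \Pi(\fl)$, so $\IP{\eta}{\ga^\vee}\in 1+\Z_{\geq 0}$. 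The hypothesis $M(\eta)\subset M(\zeta)$ of Theorem~\ref{Thm:Std} is automatically satisfied here: by the observation recorded just before the statement, the existence of the non-zero $\Cal{U}(\fg)$-homomorphism $\varphi_{\Omega_k}$ between the generalized Verma modules (constructed in Section~\ref{SS24}) forces $M(\eta)\subset M(\zeta)$.

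Next I would invoke Theorem~\ref{Thm:Std} directly: the standard map $\varphi_{std}$ from $M_\fq(\eta)$ to $M_\fq(\zeta)$ is zero if and only if $M(\eta)\subset M(s_\ga \zeta)$ for some $\ga \in \Pi(\fl)$. It remains to rewrite the two sides of this last condition. On one hand, $s_\ga \zeta = s_\ga(-s_0\gl_\fq+\rho) = -s_0\gl_\fq + \rho - \IP{-s_0\gl_\fq+\rho}{\ga^\vee}\ga = -s_0\gl_\fq+\rho-\ga$, using again that $\IP{\gl_\fq}{\ga^\vee}=0$ and $\IP{\rho}{\ga^\vee}=1$ for $\ga\in\Pi(\fl)$; so $s_\ga\zeta = -\ga - s_0\gl_\fq + \rho$. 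On the other hand, by Theorem~\ref{Thm:Link} the containment $M(\eta)\subset M(s_\ga\zeta)$ is equivalent to the existence of a sequence $(\gb_1,\ldots,\gb_t)$ of positive roots linking $s_\ga\zeta$ to $\eta$; that is, linking $-\ga-s_0\gl_\fq+\rho$ to $\nu-s_0\gl_\fq+\rho$. Combining, $\varphi_{std}$ is zero if and only if there exists $\ga\in\Pi(\fl)$ such that $-\ga-s_0\gl_\fq+\rho$ is linked to $\nu-s_0\gl_\fq+\rho$, which is exactly the statement of Proposition~\ref{Prop:Std}.

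I do not anticipate a genuine obstacle here; the proposition is essentially a dictionary entry, and the only substantive points are the bookkeeping verifications that $\eta,\zeta\in\mathbf{P}^+_{\fl}$ and the one-line computation of $s_\ga\zeta$. The mildest subtlety is making sure the direction of the link (from $\gd$ to $\gl$ in Definition~\ref{Def:Link}, i.e. from the ``larger'' weight $s_\ga\zeta$ to the ``smaller'' weight $\eta$) is stated consistently with Theorem~\ref{Thm:Link}(3), which phrases the link as going from $\gd$ to $\gl$ where $M(\gl)\subset M(\gd)$; since we want $M(\eta)\subset M(s_\ga\zeta)$, the link runs from $s_\ga\zeta$ to $\eta$, matching the phrasing ``$-\ga-s_0\gl_\fq+\rho$ is linked to $\nu-s_0\gl_\fq+\rho$'' in the proposition. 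With these checks in place the proof is complete.
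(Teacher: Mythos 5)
Your proposal is correct and follows essentially the same route as the paper's proof: establish $M(\nu-s_0\gl_\fq+\rho)\subset M(-s_0\gl_\fq+\rho)$ from the existence of $\varphi_{\Omega_k}$, apply Theorem \ref{Thm:Std} together with Theorem \ref{Thm:Link}, and compute $s_\ga(-s_0\gl_\fq+\rho)=-\ga-s_0\gl_\fq+\rho$ using $\IP{\gl_\fq}{\ga^\vee}=0$ and $\IP{\rho}{\ga^\vee}=1$ for $\ga\in\Pi(\fl)$. Your explicit verification that both weights lie in $\mathbf{P}^+_{\fl}$ is a small addition the paper leaves implicit, but it does not change the argument.
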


\begin{proof}
First observe that 
since there exists a non-zero $\Cal{U}(\fg)$-homomorphism
$\varphi_{\Omega_k}$ from $M_\fq(\nu-s_0\gl_\fq + \rho)$ 
to $M_\fq(-s_0\gl_\fq + \rho)$,  
we have $M(\nu-s_0\gl_\fq + \rho) \subset M(-s_0\gl_\fq + \rho)$.
Therefore, by Theorem \ref{Thm:Std} and Theorem \ref{Thm:Link},
the standard map from $M_\fq(\nu-s_0\gl_\fq + \rho)$ 
to $M_\fq(-s_0\gl_\fq + \rho)$ is zero if and only if
there exists $\ga \in \Pi(\fl)$ so that $s_\ga(-s_0\gl_\fq +\rho)$
is linked to $\nu- s_0\gl_\fq +\rho$.
As $\IP{\gl_\fq}{\ga^{\vee}} = 0$ and $\IP{\rho}{\ga^\vee} = 1$
for $\ga \in \Pi(\fl)$, we have
$s_\ga(-s_0\gl_\fq +\rho) = -\ga -s_0\gl_\fq+\rho$.
Now this proposition follows.
\end{proof}

With Proposition \ref{Prop:Std} in hand,
in the next two sections,
we shall determine whether or not the homomorphisms 
$\varphi_{\Omega_k}$ that arise from 
the $\Omega_k$ system(s) for $k=1,2$ constructed
in \cite{KuboThesis1} are standard.

\section{The homomorphism $\varphi_{\Omega_1}$ 
induced by the $\Omega_1$ system}\label{SS:Omega1Hom}

In this section we show that the homomorphism $\varphi_{\Omega_1}$
arising from the $\Omega_1$ system associated to a 
maximal parabolic subalgebra $\fq$ of quasi-Heisenberg type 
is standard.
For each $\ga \in \gD^+$,  we define
$\{X_\ga, X_{-\ga}, H_{\ga} \}$ as an $\f{sl}(2)$-triple;
in particular, we have $[X_\ga, X_{-\ga}] = H_\ga$.
For $\ga, \gb \in \gD$ with $\ga + \gb \in \gD$, 
we write a constant $N_{\ga,\gb}$ for
$[X_\ga, X_\gb] = N_{\ga,\gb} X_{\ga + \gb}$.
Recall from Subsection 2.2
that an irreducible constituent $W$ of $\fg(-r + k) \otimes \fg(r)$ is called
special for $\tau_k$ if $\ttau_k|_{W^*} \not \equiv 0$.

\vsp
It follows from (\ref{Eqn:TauTwoStep}) that 
the $\Omega_1$ system is constructed from the map
$\tau_1 : \fg(1) \to \fg(-1) \otimes \fz(\fn)$
with $X \mapsto \big( \ad(X)\otimes \text{Id}\big)\omega$,
where $\omega = \sum_{\gamma_j \in \gD(\fz(\fn))}X_{-\gamma_j} \otimes X_{\gamma_j}$.
In Section 5 of \cite{KuboThesis1}, it is shown that 
irreducible constituent $W$ of $\fg(-1) \otimes \fz(\fn)$ is special
if and only if $W \cong \fg(1)$ and also that there is only unique 
such a constituent. 
Via the composition of maps in (\ref{Eqn:Comp}),
the $\Omega_1$ system is given by 
$R(X_{-\ga_1}), \ldots, R(X_{-\ga_m})$ 
for $\gD(\fg(1)) = \{\ga_1, \ldots, \ga_m\}$.

\begin{Thm}\cite[Theorem 5.7]{KuboThesis1}\label{Thm8.1.2}
Let $\fg$ be a complex simple Lie algebra, 
and let $\fq$ be a maximal parabolic subalgebra 
of quasi-Heisenberg type. 
Then the $\Omega_1$ system
is conformally invariant on $\Cal{L}_{s}$ if and only if $s=0$.
\end{Thm}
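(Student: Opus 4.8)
The plan is to compute directly the bracket $[\pi_s(Y), \Omega_1(X_{-\ga_i})]$ for $Y \in \fg$ and show it has the required form $\sum_j C_{ij}^Y \Omega_1(X_{-\ga_j})$ precisely when $s = 0$. Since the $\Omega_1$ system is built from the $L_0$-intertwining operator $\Omega_1|_{W^*}$ with $W \cong \fg(1)$, the $L$-equivariance already guarantees the bracket identity for $Y \in \fg(0) = \fl$ for every value of $s$ (this is the $L_0$-stability noted at the end of Subsection 2.2). Likewise, because each $\Omega_1(X_{-\ga_i}) = R(X_{-\ga_i})$ lies in $\mathbb{D}(\Cal{L}_s)^{\bar\fn}$, the bracket with $\pi_s(Y)$ for $Y \in \bar\fn = \fg(-1)\oplus\fz(\bar\fn)$ is handled by the $\bar\fn$-invariance; and for $Y \in \fg(1)$ one can reduce, using the grading, to checking a single graded component. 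So the real content is the bracket with $Y \in \fz(\fn) = \fg(2)$, together with possibly $\fg(1)$.

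First I would use formula (\ref{Eqn7.2.2}) to get an explicit expression for $\pi_s(Y)$ as a first-order differential operator on $C^\infty(\bar N_0, \C_{\chi^s})$, exhibiting the $s$-dependence: it appears exactly in the term $s\gl_\fq\big((\Ad(\nbar^{-1})Y)_\fq\big)$, which for the relevant $Y$ produces a polynomial coefficient times the scalar $s$. Then I would compute $[\pi_s(Y), R(X_{-\ga_i})]$. The $s$-independent part of the bracket reproduces the conformal invariance at $s = 0$ — this is essentially the statement that the $\Omega_1$ system is conformally invariant on $\Cal{L}_0$, which one can read off from the construction (the operator $\Omega_1|_{W^*}$ comes from an $L$-map $W^* \to \fg(1)^* \cong \fg(-1)$, and at $s=0$ the module $M_\fq[\C_0]$ carries the trivial twist). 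The $s$-dependent part is a single extra term, linear in $s$, of the shape $s \cdot (\text{polynomial}) \cdot (\text{constant})$; I would show this term is \emph{not} expressible as $\sum_j C_{ij}^Y R(X_{-\ga_j})$ with smooth coefficients, by examining its behaviour as a differential operator — e.g. comparing orders, or evaluating against the constant function $1$, where $R(X_{-\ga_j})\acts 1 = 0$ while the anomalous term does not annihilate $1$. That forces $s = 0$.

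Concretely, to make the ``only if'' direction clean I would pick a convenient test: evaluate both sides of the putative bracket identity at the identity coset $\nbar = e$ applied to the constant section. At $\nbar = e$ we have $\Ad(\nbar^{-1}) = \mathrm{Id}$, so $\pi_s(Y)\acts f$ at $e$ equals $s\gl_\fq(Y_\fq) f(e) - (R(Y_{\bar\fn})\acts f)(e)$; choosing $Y \in \fz(\fn)$ with $\gl_\fq(Y) \neq 0$ (possible since $\gl_\fq(H_\fq) = 2$ and $\fg(2)$ is nonzero, so one can find such a $Y$ via a root vector paired appropriately — more precisely use that $\fg(2) = \fz(\fn) \neq 0$ and $\gl_\fq$ is nonzero on the $\fg(2)$-direction of $\fh$), the anomalous contribution is visibly proportional to $s$. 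A short computation with the explicit $\Omega_1$ operators shows the left side of the bracket identity at $e$ is a nonzero multiple of $s$ on a suitable test function while the right side, being a combination of the $R(X_{-\ga_j})$ which are honest $\bar\fn$-invariant operators with polynomial coefficients vanishing at $e$, contributes $0$ there. Hence $s = 0$ is forced.

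The main obstacle I expect is bookkeeping: one must carefully track how $\Ad(\nbar^{-1})Y$ decomposes into $\fq$ and $\bar\fn$ parts as $\nbar$ ranges over $\bar N_0$, since for general $\nbar$ the coefficient $s\gl_\fq\big((\Ad(\nbar^{-1})Y)_\fq\big)$ is a nonconstant polynomial, and one needs to be sure the ``anomalous'' term genuinely cannot be absorbed into $\sum_j C_{ij}^Y R(X_{-\ga_j})$ with $C_{ij}^Y \in C^\infty(\bar N_0)$ — a degree/order argument (the anomalous term is order $\le$ that of $R(X_{-\ga_i})$ minus one, i.e. order $0$, and acts as multiplication, whereas each $R(X_{-\ga_j})$ is a genuine order-one operator with no order-zero part after the commutator is taken) should settle it, but it must be done with care. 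Alternatively one can simply cite the explicit computations of Section 5 of \cite{KuboThesis1}, since Theorem \ref{Thm8.1.2} is quoted there as Theorem 5.7; in that case the ``proof'' here is just a pointer, and the substance is the reduction above showing why $s=0$ is both necessary and sufficient.
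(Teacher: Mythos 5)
First, a point of reference: the present paper does not prove Theorem \ref{Thm8.1.2} at all --- it is imported verbatim from \cite[Theorem 5.7]{KuboThesis1}, where the verification is the explicit computation of Section 5 of that paper (equivalently, the check that $F(\Omega_1)=\fg(-1)\otimes 1$ is annihilated by $\fn$ inside $M_\fq[\C_{-s\gl_\fq}]$ exactly when $s=0$). So your proposal can only be measured against the cited source, not against anything in this text.

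Your overall strategy (reduce to $Y\in\fn$, locate the $s$-dependence in the zeroth-order term $s\gl_\fq\big((\Ad(\nbar^{-1})Y)_\fq\big)$ of (\ref{Eqn7.2.2}), and test the bracket identity on the constant function $1$) is sound, but your chosen witness is wrong. You propose $Y\in\fz(\fn)$ with $\gl_\fq(Y)\neq 0$; no such $Y$ exists, because in (\ref{Eqn7.2.2}) the weight $\gl_\fq$ is used as a one-dimensional representation of $\fq$ and therefore vanishes on $[\fq,\fq]\supset\fn\supset\fz(\fn)$ (the element $H_\fq$ you invoke lies in $\fh\subset\fl$, not in $\fz(\fn)$). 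Moreover, for $Y\in\fg(2)$ the first-order Taylor coefficient of $\gl_\fq\big((\Ad(\nbar^{-1})Y)_\fq\big)$ at $\nbar=e$ in the direction $X_{-\ga_i}$ is $\pm\gl_\fq([X_{-\ga_i},Y])$ with $[X_{-\ga_i},Y]\in\fg(1)\subset\fn$, so it also vanishes: your test at the identity detects nothing from $\fz(\fn)$. The obstruction is seen already in degree one: taking $Y=X_{\ga_i}\in\fg(1)$ one gets
\begin{equation*}
\big([\pi_s(X_{\ga_i}),R(X_{-\ga_i})]\acts 1\big)(e)
=-\Big(R(X_{-\ga_i})\acts s\gl_\fq\big((\Ad(\nbar^{-1})X_{\ga_i})_\fq\big)\Big)(e)
=-s\,\gl_\fq(H_{\ga_i}),
\end{equation*}
which is nonzero for $s\neq 0$ since $\ga_i$ contains $\ga_\fq$ with multiplicity one, whereas $\sum_j C_{ij}^{Y}R(X_{-\ga_j})\acts 1=0$ because each $R(X_{-\ga_j})$ is a vector field and kills constants (that, and not any vanishing of the coefficients $C_{ij}^Y$ at $e$, is the correct reason the right-hand side dies). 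With this repair the ``only if'' direction goes through. The ``if'' direction, however, is still only asserted: saying the $s$-independent part ``reproduces the conformal invariance at $s=0$, which one can read off from the construction'' is circular. One must actually verify for $Y\in\fg(1)\oplus\fz(\fn)$ that $[\pi_0(Y),R(X_{-\ga_i})]$ closes on $\spn\{R(X_{-\ga_j})\}$; in the module picture of Section \ref{SS24} this is the short computation that, for $\gb\in\gD(\fn)$ and $\ga\in\gD(\fg(1))$, $X_\gb\cdot(X_{-\ga}\otimes 1)=[X_\gb,X_{-\ga}]\otimes 1=-s\gl_\fq([X_\gb,X_{-\ga}])(1\otimes 1)$, which vanishes for all such pairs precisely when $s=0$. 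That computation (or its differential-operator avatar in \cite{KuboThesis1}) is the actual content of the theorem and is missing from your sketch.
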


It follows from Proposition \ref{Cor243} and 
Theorem \ref{Thm8.1.2} that the $\Omega_1$ system 
yields a finite dimensional simple $\fl$-submodule $F(\Omega_1)$ in 
$\big( \Cal{U}(\fg)\otimes_{\Cal{U}(\fq)}\C_0 \big)^{\fn} = M_\fq(\rho)^\fn$. 
If $\ga_{\fq}$ is the simple root that 
determines the maximal parabolic subalgebra $\fq$
then, as it is the lowest weight for $\fg(1)$,
$W^*\cong \fg(-1)$ has highest weight $-\ga_\fq$.
Thus, by Proposition \ref{Cor243},
the simple $\fl$-module $F(\Omega_1)$ has highest weight 
$\nu-s_0\gl_\fq=-\ga_\fq$.
Now, by (\ref{Eqn:GVM2}),
the inclusion map $F(\Omega_1) \hookrightarrow M_\fq(\rho)$ 
induces a non-zero $\Cal{U}(\fg)$-homomorphism
\begin{equation*}
\varphi_{\Omega_1}: M_\fq(-\ga_\fq + \rho) \to M_\fq(\rho).
\end{equation*}

\begin{Prop}\label{Prop:StdO1}
If $\fq$ is a maximal parabolic subalgebra 
of quasi-Heisenberg type then 
the standard map $\varphi_{std}: M_\fq(-\ga_\fq + \rho)
\to M_\fq(\rho)$ is non-zero. 
\end{Prop}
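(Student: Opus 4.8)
The plan is to apply Proposition~\ref{Prop:Std2} directly. Observe that in the present situation the special value is $s_0 = 0$ (by Theorem~\ref{Thm8.1.2}), and the highest weight of $F(\Omega_1)$ is $\nu - s_0\gl_\fq = -\ga_\fq$, so $\nu = -\ga_\fq$ since $s_0 = 0$. Thus we are exactly in the setting of Proposition~\ref{Prop:Std2} with $\nu = -(1-s_0)\ga_\fq$ and $1 - s_0 = 1 \in 1 + \Z_{\geq 0}$: the generalized Verma module $M_\fq(-\ga_\fq + \rho)$ coincides with $M_\fq(-(1-s_0)\ga_\fq - s_0\gl_\fq + \rho)$ and $M_\fq(\rho)$ with $M_\fq(-s_0\gl_\fq + \rho)$.

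First I would record that $s_0 = 0$ and $\nu = -\ga_\fq$, so that the hypotheses of Proposition~\ref{Prop:Std2} are satisfied with $1 - s_0 = 1$. Then I would invoke Proposition~\ref{Prop:Std2} to conclude that the standard map $\varphi_{std}$ sends the highest weight vector $1 \otimes v_h$ of $M_\fq(-\ga_\fq + \rho)$ to $c\, X_{-\ga_\fq} \otimes 1_{0} \neq 0$ for some nonzero constant $c$, and in particular $\varphi_{std}$ is non-zero. Alternatively, one can argue from scratch along the lines of the proof of Proposition~\ref{Prop:Std2}: since $\IP{\rho}{\ga_\fq^\vee} = 1 \in 1 + \Z_{\geq 0}$, Proposition~\ref{Prop:Map} gives a non-zero highest weight vector $X_{-\ga_\fq} \otimes 1$ of weight $-\ga_\fq$ in $M(\rho)$, hence $M(-\ga_\fq + \rho) \subset M(\rho)$; since $\ga_\fq \in \Pi \setminus \Pi(\fl)$, the image of $X_{-\ga_\fq} \otimes 1$ under the projection $M(\rho) \to M_\fq(\rho)$ is non-zero, and the universal property of $M_\fq(-\ga_\fq + \rho)$ in the relative category $\Cal{O}^\fq$ forces the induced standard map to be non-zero.

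There is essentially no obstacle here: the statement is an immediate specialization of Proposition~\ref{Prop:Std2}, the only thing to check being the bookkeeping that identifies the two parametrizations of the modules (i.e.\ that $s_0 = 0$ and $\nu = -\ga_\fq$, which was already established just above the statement). The one small point worth spelling out is that $-\ga_\fq$ is indeed the highest weight of $F(\Omega_1)$: this follows because $W^* \cong \fg(-1)$ and $\ga_\fq$, being the lowest weight of $\fg(1)$, gives $-\ga_\fq$ as the highest weight of $\fg(-1)$, as already noted in the paragraph preceding Proposition~\ref{Prop:StdO1}.
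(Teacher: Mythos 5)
Your proposal is correct and follows exactly the paper's own argument: the paper's proof is the one-line observation that the statement follows from Proposition~\ref{Prop:Std2} with $s_0 = 0$, which is precisely your main route (your bookkeeping that $\nu = -\ga_\fq$ and $1 - s_0 = 1 \in 1 + \Z_{\geq 0}$, and your alternative unwinding via Proposition~\ref{Prop:Map}, just make explicit what the paper leaves implicit).
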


\begin{proof}
This follows from Proposition \ref{Prop:Std2}
with $s_0 = 0$.
\end{proof}

\begin{Thm}\label{Thm:MapO1}
If $\fq$ is a maximal parabolic subalgebra 
of quasi-Heisenberg type then 
the map $\varphi_{\Omega_1}$ is standard.
\end{Thm}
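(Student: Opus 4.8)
The plan is to show that the non-zero homomorphism $\varphi_{\Omega_1}\colon M_\fq(-\ga_\fq + \rho)\to M_\fq(\rho)$ coincides, up to a scalar, with the standard map $\varphi_{std}\colon M_\fq(-\ga_\fq + \rho)\to M_\fq(\rho)$, which by Proposition \ref{Prop:StdO1} is non-zero. The key point is that the space $\Hom_{\Cal{U}(\fg)}\big(M_\fq(-\ga_\fq + \rho), M_\fq(\rho)\big)$ is at most one-dimensional, so \emph{any} two non-zero elements of it are proportional; hence it suffices to know that both $\varphi_{\Omega_1}$ and $\varphi_{std}$ are non-zero and that this $\Hom$ space has dimension $\le 1$.

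First I would establish the one-dimensionality. Both generalized Verma modules are highest weight modules in the relative category $\Cal{O}^\fq$, so by the universal property a homomorphism $M_\fq(-\ga_\fq+\rho)\to M_\fq(\rho)$ is determined by the image of the highest weight vector $1\otimes v_h$, which must be a highest weight vector (i.e.\ an $\fn$-invariant vector annihilated by the raising operators of $\fl$) of weight $-\ga_\fq$ in $M_\fq(\rho)$. Using the PBW description $M_\fq(\rho)\cong \Cal{U}(\bar\fn)\otimes \C_0$ and the fact that $-\ga_\fq$ lies on the $\Cal{U}(\bar\fn)$-weight lattice in a minimal way (only $X_{-\ga_\fq}$ lowers the weight from $0$ to $-\ga_\fq$, since $\ga_\fq\in\Pi$ and the $\fg(1)$-weights are precisely the positive roots $\gb$ with $\gb(H_\fq)=1$, with $\ga_\fq$ the unique simple one among them), the weight space $\big(M_\fq(\rho)\big)_{-\ga_\fq}$ is spanned by $X_{-\ga_\fq}\otimes 1$. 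This forces $\dim\Hom_{\Cal{U}(\fg)}\big(M_\fq(-\ga_\fq+\rho), M_\fq(\rho)\big)\le 1$. Then, since $\varphi_{\Omega_1}$ is non-zero by the discussion preceding the theorem (it is induced from the inclusion $\iota\colon F(\Omega_1)\hookrightarrow M_\fq(\rho)$ of the non-trivial submodule given by Proposition \ref{Cor243} and Theorem \ref{Thm8.1.2}) and $\varphi_{std}$ is non-zero by Proposition \ref{Prop:StdO1}, the two maps differ by a non-zero scalar. In particular $\varphi_{\Omega_1}$ is standard.

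Alternatively, and perhaps more directly, I would simply identify the images of the highest weight vector under both maps: $\varphi_{std}(1\otimes v_h) = cX_{-\ga_\fq}\otimes 1_{0}$ by Proposition \ref{Prop:Std2} with $s_0=0$, while $\varphi_{\Omega_1}(1\otimes v_h) = \iota(\omega_1(Y^*_h)\otimes 1)$ where $Y^*_h$ is the highest weight vector of $W^*\cong\fg(-1)$ with weight $-\ga_\fq$; since $W\cong\fg(1)$ with lowest weight $\ga_\fq$, one has $\omega_1(Y^*_h) = R(X_{-\ga_\fq})$ up to scalar, so $\varphi_{\Omega_1}(1\otimes v_h)$ is a non-zero multiple of $X_{-\ga_\fq}\otimes 1$ as well. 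Hence $\varphi_{\Omega_1}$ and $\varphi_{std}$ agree up to scalar on the highest weight vector, and therefore as maps. I expect the main obstacle to be the bookkeeping in the second step: verifying carefully that the $-\ga_\fq$ weight space of $M_\fq(\rho)$ is exactly one-dimensional (equivalently, that $\ga_\fq$ is the unique weight of $\fg(1)$ that is a simple root, so no other PBW monomial in $\Cal{U}(\bar\fn)$ contributes weight $-\ga_\fq$) and that the constituent $W\cong\fg(1)$ really produces $R(X_{-\ga_\fq})$ rather than some other element — but both follow from the structure theory of quasi-Heisenberg parabolics recalled in Subsection \ref{SS:TwoStep} and the explicit form of the $\Omega_1$ system recorded just before Theorem \ref{Thm8.1.2}.
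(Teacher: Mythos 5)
Your proposal is correct, and your second, ``more direct'' argument is essentially the paper's own proof: the paper likewise computes $\varphi_{std}(1\otimes v_h)=cX_{-\ga_\fq}\otimes 1_0$ via Proposition \ref{Prop:Std2} with $s_0=0$, observes that $F(\Omega_1)=\text{span}_\C\{X_{-\ga}\otimes 1_0 \mid \ga\in\gD(\fg(1))\}$ because $\omega_1(X_{-\ga_j})=X_{-\ga_j}$, and concludes that $v_h=\varphi_{\Omega_1}(1\otimes v_h)$ is a non-zero multiple of $X_{-\ga_\fq}\otimes 1_0$. Your first variant, via $\dim\Hom_{\Cal{U}(\fg)}\big(M_\fq(-\ga_\fq+\rho),M_\fq(\rho)\big)\le 1$ from the one-dimensionality of the $-\ga_\fq$ weight space, is a harmless repackaging of the same key facts (modulo the small notational slip that $\omega_1(Y^*_h)$ is $X_{-\ga_\fq}\in\Cal{U}(\bar\fn)$, not the operator $R(X_{-\ga_\fq})$).
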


\begin{proof}
Let $v_h$ be a highest weight vector for $F(\Omega_1)$.
Since $\varphi_{\Omega_1} ( 1 \otimes v_h) = 1 \cdot v_h = v_h$,
to prove that $\varphi_{\Omega_1}$ is standard, 
by Propostion \ref{Prop:Std2} and Proposition \ref{Prop:StdO1},
it suffices to show that
$v_h = cX_{-\ga_\fq} \otimes 1_0$ with some non-zero constant $c$.
To do so, as $v_h$ is a highest weight vector for $F(\Omega_1)$,
we show that $X_{-\ga_\fq}\otimes 1_0$ is a highest weight vector
for $F(\Omega_1)$.
Since the $\Omega_1$ system is 
$R(X_{-\ga_1}), \ldots, R(X_{-\ga_m})$ for 
$\gD(\fg(1)) = \{\ga_1, \ldots, \ga_m\}$,
it is clear that the elements $\omega_1(X_{-\ga_j}) \in \gs(\Sym^1(\bar \fn))=\bar \fn$
that correspond to $R(X_{-\ga_j})$ under $R$
are $\omega_1(X_{-\ga_j}) = X_{-\ga_j}$. Then it follows
from (\ref{Eqn:Ltype1}) that
\begin{equation*}
F(\Omega_1) =\text{span}_\C\{
X_{-\ga} \otimes 1_0 \; | \; \ga \in \gD(\fg(1))\}.
\end{equation*}
Therefore $X_{-\ga_\fq}\otimes 1_0$ is a highest weight vector for $F(\Omega_1)$.
\end{proof}

\section{The homomorphisms $\varphi_{\Omega_2}$ 
induced by the $\Omega_2$ systems} 
\label{SS:Omega2Hom} 
 
The aim of this section is to classify  
the homomorphisms $\varphi_{\Omega_2}$ 
that are induced by the $\Omega_2$ systems associated to 
maximal parabolic subalgebras $\fq$ listed 
in (\ref{Eqn4.0.1}) and (\ref{Eqn4.0.2}) 
as standard or not. 

We first recall from Section 6 of \cite{KuboThesis1}
some observation on special constituents.
The $\Omega_2$ systems are constructed from the map
$\tau_2 : \fg(1) \to \fl \otimes \fz(\fn)$ with
$X \mapsto \frac{1}{2}\big( \ad(X)^2\otimes \text{Id}\big)\omega$.
Observe that if $V(\nu)$ is a special constituent 
of $\fg(0) \otimes \fz(\fn) = \fl \otimes \fz(\fn)$ with 
highest weight $\nu$ then,
as $V(\nu)^*$ is embedded into 
$\Cal{P}^2(\fg(1)) \cong \Sym^2(\fg(1))^* \subset \fg(1)^*\otimes \fg(1)^*$,
we have $V(\nu) \hookrightarrow \fg(1)\otimes \fg(1)$.
Thus the highest weight $\nu$ is of the form $\mu +\ge$, where 
$\mu$ is the highest weight for $\fg(1)$ and $\ge$ is some weight 
for $\fg(1)$.

Recall from (\ref{Eqn4.1.8}) that we have 
$\fl = \C H_\fq \oplus \flg \oplus \flng$.
Thus the tensor product $\fl \otimes \fz(\fn)$ may be written as
$\fl \otimes \fz(\fn) 
= \big( \C H_\fq\otimes \fz(\fn) \big) 
\oplus \big( \flg \otimes \fz(\fn) \big) 
\oplus \big( \flng\otimes \fz(\fn) \big)$. 
It is shown in Section 6 of \cite{KuboThesis1} that,
for $\fq$ under consideration in (\ref{Eqn4.0.1}) and (\ref{Eqn4.0.2}), 
there are exactly one or two special constituents
of $\fl \otimes \fz(\fn)$; one is an irreducible constituent of
$\flg\otimes \fz(\fn)$ and the other is equal to $\flng \otimes \fz(\fn)$.
We denote by $V(\mu+\geg)$ and $V(\mu+\geng)$
the special constituents so that 
$V(\mu+\geg) \subset \flg\otimes \fz(\fn)$ and
$V(\mu+\geng) = \flng\otimes \fz(\fn)$.
We summarize the data on the special constituents 
in Table \ref{T51} and Table \ref{T52} below.
We use the standard realizations for the roots for the classical algebras,
while the Bourbaki conventions \cite{Bourbaki08} are used for the exceptional algebras
for the labels of the simple roots. 
A dash in the column for $V(\mu + \geng)$
indicates that $\flng = \{0\}$ for the case.
(So there is no special constituent $V(\mu+\geng)$.)

\begin{table}[h]
\caption{Highest Weights for Special Constituents (Classical Cases)}
\begin{center}
\begin{tabular}{ccccc} 
\hline
Type        &$V(\mu + \geg)$ &$V(\mu + \geng)$ \\
\hline
$B_n(i),\;  3\leq i \leq n-2$ &$2\varepsilon_1$&$\varepsilon_1 + \varepsilon_2 + \varepsilon_{i+1} + \varepsilon_{i+2}$\\
$B_n(n-1)$ &$2\varepsilon_1$&$\varepsilon_1 + \varepsilon_2 + \varepsilon_n$\\
$B_n(n)$ &$2\varepsilon_1$&$-$\\
$C_n(i), \; 2\leq i \leq n-1$ &$\varepsilon_1 + \varepsilon_2$&$2\varepsilon_1 + 2\varepsilon_{i+1}$\\
$D_n(i), \; 3\leq i \leq n-3$ &$2\varepsilon_1$&$\varepsilon_1 +\varepsilon_2 + \varepsilon_{i+1} + \varepsilon_{i+2}$\\
\hline
\end{tabular} \label{T51}
\end{center}
\end{table}

\begin{table}[h]
\caption{Highest Weights for Special Constituents (Exceptional Cases)}
\begin{center}
\makebox[0.93 \width][r]{ 
\begin{tabular}{cc} 
\hline
Type        &$V(\mu + \geg)$ \\
\hline
$E_6(3)$ &$\ga_1 + 2\ga_2 + 2\ga_3 + 4\ga_4 + 3\ga_5 + 2\ga_6$\\
$E_6(5)$ &$2\ga_1 + 2\ga_2 + 3\ga_3 + 4\ga_4 + 2\ga_5 + \ga_6$\\
$E_7(2)$ &$2\ga_1 + 2\ga_2 + 4\ga_3 + 5\ga_4 + 4\ga_5 + 3\ga_6 + 2\ga_7$\\
$E_7(6)$ &$2\ga_1 + 3\ga_2 + 4\ga_3 + 6\ga_4 + 4\ga_5 + 2\ga_6 + \ga_7$ \\
$E_8(1)$ &$2\ga_1 + 4\ga_2  + 5\ga_3 + 8\ga_4 + 7\ga_5 + 6\ga_6 + 4\ga_7 + 2\ga_8$\\
$F_4(4)$ & $2\ga_1+ 4\ga_2 + 6\ga_3 + 2\ga_4$\\
\hline
Type       &$V(\mu + \geng)$ \\
\hline
$E_6(3)$ &$2\ga_1 + 2\ga_2 + 2\ga_3 + 3\ga_4 + 2\ga_5 + \ga_6$\\
$E_6(5)$ &$\ga_1 + 2\ga_2 + 2\ga_3 + 3\ga_4 + 2\ga_5 + 2\ga_6$ \\
$E_7(2)$ &$-$\\
$E_7(6)$ &$2\ga_1 + 2\ga_2 + 3\ga_3 + 4\ga_4 + 3\ga_5 + 2\ga_6 + 2\ga_7$ \\
$E_8(1)$ &$-$\\
$F_4(4)$ &$-$\\
\hline
\end{tabular} \label{T52}
}
\end{center}
\end{table}

\begin{Def}\cite[Definition 6.20]{KuboThesis1}\label{Def5.2.8}
Let $\mu$ be the highest weight for $\fg(1)$, and let 
$\ge = \geg$ or $\geng$.
We say that a special constituent $V(\mu+\ge)$ is of 
\begin{enumerate}
\item [(1)] \textbf{type 1a} if $\mu + \ge$ is not a root with $\ge \neq \mu$ and both 
$\mu$ and $\ge$ are long roots,
\item [(2)] \textbf{type 1b} if $\mu + \ge$ is not a root with $\ge \neq \mu$ and either
$\mu$ or $\ge$ is a short root,
\item [(3)] \textbf{type 2} if $\mu + \ge = 2\mu$ is not a root, or
\item [(4)] \textbf{type 3} if $\mu + \ge$ is a root.
\end{enumerate}
\end{Def}

Table \ref{T55} below shows the types of special constituents 
for each maximal parabolic subalgebra $\fq$.
In \cite{KuboThesis1}  the special values 
for the type 1a and type 2 cases are determined.
\begin{table}[h]
\caption{Types of Special Constituents}
\begin{center}
\begin{tabular}{ccccc} 
\hline
Type        &$V(\mu+\geg)$ &$V(\mu+\geng)$ \\
\hline
$B_n(i),\;  3\leq i \leq n-2$ &Type 1a&Type 1a\\
$B_n(n-1)$                      &Type 1a&Type 1b\\
$B_n(n)$                         &Type 2&$-$\\
$C_n(i), \; 2\leq i \leq n-1$ &Type 3&Type 2\\
$D_n(i), \; 3\leq i \leq n-3$ &Type 1a&Type 1a\\
$E_6(3)$                          &Type 1a&Type 1a\\
$E_6(5)$                          &Type 1a&Type 1a\\
$E_7(2)$                          &Type 1a&$-$\\
$E_7(6)$                          &Type 1a&Type 1a\\
$E_8(1)$                          &Type 1a&$-$\\
$F_4(4)$                          &Type 2&$-$\\
\hline
\end{tabular} \label{T55}
\end{center}
\end{table}

For $\mu+\ge = \mu+\geg$ or $\mu+\geng$,
we write
\begin{equation*} 
\gD_{\mu+\ge}(\fg(1)) = \{\ga \in \gD(\fg(1))\; | \; \mu+\ge - \ga \in \gD(\fg(1))\}.
\end{equation*}
We denote by $|\gD_{\mu+\ge}(\fg(1))|$ the number of elements 
in $\gD_{\mu+\ge}(\fg(1))$.

\begin{Thm}\cite[Theorem 7.16, Corollary 7.23]{KuboThesis1}\label{Thm8.3.1}
Suppose that $V(\mu+\ge)$ is a special constituent of type 1a or type 2.
\begin{enumerate}
\item[(1)] If $V(\mu+\ge)$ is of type 1a then
the $\Omega_2|_{V(\mu+\ge)^*}$ system 
is conformally invariant on $\Cal{L}_{s}$
if and only if 
\begin{equation*} \label{EqnSpecial}
s = \frac{|\gD_{\mu+\ge}(\fg(1))|}{2}-1.
\end{equation*}

\item[(2)] If $V(\mu+\ge)$ is of type 2 then
the $\Omega_2|_{V(\mu+\ge)^*}$ system
is conformally invariant on $\Cal{L}_{s}$
if and only if 
\begin{equation*}
s=-1.
\end{equation*}
\end{enumerate}
\end{Thm}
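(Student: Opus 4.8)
The statement to be proved is Theorem \ref{Thm8.3.1}, which computes the special value $s_0$ of an $\Omega_2|_{V(\mu+\ge)^*}$ system when the special constituent is of type 1a or type 2. Since this is quoted from \cite{KuboThesis1} (Theorem 7.16 and Corollary 7.23), the proof is by reference; but below I sketch how one would establish it from scratch within the framework already set up in the excerpt.

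The plan is to reduce the conformal invariance condition to an explicit computation of a single bracket $[\pi_s(X), \Omega_2(Y^*)]$ for $X \in \fg(-1)$ (the other generators of $\bar\fn$, together with $\fl$ and $\fn$, being handled automatically by the $L_0$-equivariance of $\Omega_2|_{W^*}$ and the fact, cited via Remark 3.8 of \cite{KuboThesis1}, that a conformally invariant $\Omega_k$ system is automatically straight, homogeneous, and $L_0$-stable). Concretely, using formula (\ref{Eqn7.2.2}) for the action $\pi_s$ on $C^\infty(\bar N_0,\C_{\chi^s})$, one expands $[\pi_s(X),\Omega_2(Y^*)]$ as a differential operator of order $\le 1$ with coefficients that are polynomials in the coordinates on $\bar N_0$ and linear in $s$. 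The system is conformally invariant precisely when this bracket lies in the span of the $\Omega_2(Y_i^*)$; the order-one part vanishes by the choice of $\tau_2$ (this is where the definition via $\frac{1}{2}(\ad(X)^2\otimes\mathrm{Id})\omega$ does its work), and the order-zero part, which is a function times a combination of the original operators, produces a scalar linear equation in $s$. Solving that equation gives $s_0$.

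The heart of the matter is then a root-theoretic bookkeeping computation that identifies the coefficient of $s$ and the constant term in that linear equation. First I would write $\omega_2(Y^*)\in\gs(\Sym^2(\fg(-1)))$ explicitly: for $Y^* = \kappa(X_\ga,\cdot)\otimes\kappa(X_\gb,\cdot)$-type terms one gets, from the last displayed formula for $Y^*(\tau_2(X))$ in Section 2, a sum $\sum c_{\ga,\gb}\,X_{-\delta_1}X_{-\delta_2}$ over pairs with $\delta_1+\delta_2 = \mu+\ge$ inside $\gD(\fg(1))$ — so the sum is indexed by $\gD_{\mu+\ge}(\fg(1))$, which is why $|\gD_{\mu+\ge}(\fg(1))|$ appears. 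Then I would commute $X_\theta$ (a root vector in $\fg(-1)$) past this quadratic expression, using $[X_\theta, X_{-\delta_i}] = H_{\delta_i}$ when $\theta=\delta_i$ and structure constants otherwise, and collect the surviving terms. The order-zero contribution splits into: (i) a piece proportional to $s$, coming from the $s\gl_\fq((\Ad(\nbar^{-1})X)_\fq)$ term of (\ref{Eqn7.2.2}), which contributes a factor essentially counting how many of the $X_{-\delta_i}$ can pair with $X_\theta$, i.e.\ $|\gD_{\mu+\ge}(\fg(1))|$ up to normalization; and (ii) an $s$-independent piece coming from the $H_{\delta_i}$ terms, evaluated against the highest weight $\mu+\ge$ of $V(\mu+\ge)$. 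The type-1a hypothesis (both $\mu$ and $\ge$ long, and $\mu+\ge$ not a root) is exactly what makes the relevant $\IP{\mu+\ge}{\delta_i^\vee}$ take the uniform value that collapses (ii) into the constant $-1$ after dividing through; in the type-2 case $\mu+\ge = 2\mu$ forces $|\gD_{2\mu}(\fg(1))|$-type terms to degenerate and the equation reads simply $s+1=0$.

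The main obstacle I anticipate is the careful handling of normalization constants — the $\f{sl}(2)$-normalization of the root vectors $X_{\gamma_j}$ in $\omega$, the Killing-form pairings in the definition of $\ttau_2|_{W^*}$, and the symmetrization map $\gs$ — so that the coefficient of $s$ comes out to be exactly $|\gD_{\mu+\ge}(\fg(1))|/2$ and the constant exactly $-1$, rather than some case-dependent rational multiple. A clean way to manage this is to test the bracket identity on the highest weight vector $1\otimes v_h$ of $F(\Omega_2|_{V(\mu+\ge)^*})$ (whose form is pinned down by Proposition \ref{Cor243}), reducing everything to a single scalar identity in $\fh^*$; the type 1a versus type 2 dichotomy of Definition \ref{Def5.2.8} then controls which terms survive. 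For the cases not of type 1a or type 2 — type 1b and type 3 — this uniform argument breaks (the $\IP{\mu+\ge}{\delta_i^\vee}$ are no longer constant over $\gD_{\mu+\ge}(\fg(1))$, or $\mu+\ge$ being a root introduces extra first-order cross terms), which is precisely why Theorem \ref{Thm8.3.1} is restricted to types 1a and 2 and the remaining $C_n(i)$ and $B_n(n-1)$ cases are treated separately later in Section \ref{SS:Omega2Hom}.
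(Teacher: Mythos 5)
This theorem is quoted from \cite{KuboThesis1} (Theorem 7.16 and Corollary 7.23); the present paper contains no proof of it, so your sketch can only be measured against the framework the paper does set up, in particular Theorem \ref{Thm242} and Proposition \ref{Cor243}. Against that framework your proposal has the right overall shape --- reduce conformal invariance to a single bracket computation on a highest (or lowest) weight vector, collect the Cartan contributions into a linear equation in $s$, and observe that the type 1a/type 2 hypotheses make the relevant pairings $\IP{\mu+\ge}{\delta^\vee}$ uniform --- and your identification of where $|\gD_{\mu+\ge}(\fg(1))|$ enters (the decompositions $\mu+\ge=\delta_1+\delta_2$ inside $\gD(\fg(1))$ indexing the quadratic expression $\omega_2(Y^*)$) is correct.

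However, there is a concrete error in the reduction step: you assert that the essential bracket to compute is $[\pi_s(X),\Omega_2(Y^*)]$ for $X\in\fg(-1)$, with ``$\fl$ and $\fn$ handled automatically.'' This is exactly backwards. The operators $\Omega_2(Y^*)=R(\omega_2(Y^*))$ with $\omega_2(Y^*)\in\Cal{U}(\bar\fn)$ are left-invariant on $\bar N_0$, and by (\ref{Eqn7.2.2}) the action $\pi_s(X)$ for $X\in\bar\fn$ is precisely (minus) infinitesimal left translation, so $[\pi_s(X),\Omega_2(Y^*)]=0$ identically for all $X\in\bar\fn$ and all $s$: no condition on $s$ can arise there. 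The bracket with $\fl$ is controlled by $L_0$-equivariance, and the genuinely nontrivial condition --- the one that singles out the special value --- is the bracket with $\fn$, equivalently with $\fg(1)$ since $\fg(1)$ generates $\fn$. This is exactly what Theorem \ref{Thm242} encodes: conformal invariance on $\Cal{L}_{s_0}$ forces $F(\Omega_2|_{W^*})\subset M_\fq[\C_{-s_0\gl_\fq}]^{\fn}$, i.e.\ annihilation by $\fn$, and the equation $s_0=|\gD_{\mu+\ge}(\fg(1))|/2-1$ comes from requiring $X_\ga\cdot(\omega_2(Y^*)\otimes 1)=0$ for $\ga\in\gD(\fg(1))$. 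Your own subsequent computation betrays the confusion: you invoke $[X_\theta,X_{-\delta_i}]=H_{\delta_i}$ ``when $\theta=\delta_i$,'' which requires $X_\theta=X_{\delta_i}\in\fg(1)$, not $\fg(-1)$ (two elements of $\fg(-1)$ bracket into $\fg(-2)$, never into $\fh$). Once the test vectors are taken from $\fg(1)$, the mechanism you describe --- the commutators land in $\fl=\fg(0)$, the $\fh$-components act on $\C_{-s\gl_\fq}$ producing the term linear in $s$, the remaining terms are weighted by $\IP{\mu+\ge}{\delta^\vee}$ over $\delta\in\gD_{\mu+\ge}(\fg(1))$ --- is the right one and yields the stated values; but as written the reduction step would produce no equation at all.
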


Let $\gl_i$ be the fundamental weight for the simple root 
$\ga_i$ that determines the maximal parabolic subalgebra $\fq$.
Table \ref{T93} below summarizes
the line bundles $\Cal{L}_s = \Cal{L}(s\gl_i)$
on which the $\Omega_2$ systems are conformally invariant.
\begin{table}[h]
\caption{Line bundles with special values}
\begin{center}
\begin{tabular}{c|c|c}
\hline
Parabolic $\fq$ 
&$\Omega_2|_{V(\mu+\geg)^*}$ & $\Omega_2|_{V(\mu+\geng)^*}$\\
\hline
$B_n(i), 3\leq i \leq n-2$ & $\Cal{L}\big( (n- i - \frac{1}{2})\gl_i\big)$ & $\Cal{L}(\gl_i)$  \\
$B_n(n-1)$ & $\Cal{L}\big( \frac{1}{2}\gl_{n-1} \big)$ & $?$  \\
$B_n(n)$ & $\Cal{L}(-\gl_n)$ & $-$  \\
$C_n(i), 2 \leq i \leq n-1$  &$?$ & $\Cal{L}(-\gl_i)$ \\
$D_n(i), 3 \leq i \leq n-3$ & $\Cal{L}\big((n - i - 1)\gl_i\big)$& $\Cal{L}(\gl_i)$\\
$E_6(3)$ & $\Cal{L}(\gl_3)$ & $\Cal{L}(2\gl_3)$  \\
$E_6(5)$ & $\Cal{L}(\gl_5)$ & $\Cal{L}(2\gl_5)$  \\
$E_7(2)$ & $\Cal{L}(2\gl_2)$ & $-$ \\
$E_7(6)$ & $\Cal{L}(\gl_6)$ & $\Cal{L}(3\gl_6)$  \\
$E_8(1)$ & $\Cal{L}(3\gl_1)$ & $-$ \\
$F_4(4)$ & $\Cal{L}(-\gl_4)$ & $-$ \\
\hline
\end{tabular}\label{T93}
\end{center}
\end{table}
When $\fq$ is of type $B_n(n-1)$,
the constituent $V(\mu+\geng)$ is of type 1b,
and when $\fq$ is of type $C_n(i)$,
the constituent $V(\mu+\geg)$ is of type 3. 
Therefore, a question mark is put for these cases in the table.

\vsp

Now, with the results in Table \ref{T93} in hand, 
we determine the standardness of 
$\varphi_{\Omega_2}$.
Observe from Table \ref{T55} and Table \ref{T93} that 
each $\Omega_2|_{V(\mu+\ge)^*}$ system satisfies  
exactly one of the following:
\begin{enumerate}
\item The special constituent $V(\mu+\ge)$ is of type 2.
\item The special value $s_0$ is a positive integer.
\item The parabolic subalgebra $\fq$ is of type $B_n(i)$ 
for $3\leq i \leq n-1$ and $V(\mu+\ge) = V(\mu+\geg)$.
\end{enumerate}
We shall consider these three cases separately.

\subsection{The type 2 case} 

We first study the homomorphism attached to 
the special constituent $V(\mu+\ge)$ of type 2.
By Table \ref{T55}, we consider the following three cases: 
\begin{equation*}
V(\mu+\geg) \text{ for } B_n(n), \;
V(\mu+\geng) \text{ for } C_n(i) (2 \leq i \leq n-1),\; \text{and} \;
V(\mu+\geg) \text{ for } F_4(4).
\end{equation*}

If $V(\mu+\ge)$ is a type 2 special constituent then,
by definition, $V(\mu+\ge) = V(2\mu)$.
Thus, as $\mu$ and $\ga_\fq$ are the highest and lowest 
weights for $\fg(1)$, respectively, we have
$V(\mu+\ge)^* = V(2\mu)^* =V(-2\ga_\fq)$.
Therefore $\nu$ in (\ref{Eqn:GVM2}) is $\nu=-2\ga_\fq$.
Moreover, by Theorem \ref{Thm8.3.1}, 
the $\Omega_2|_{V(2\mu)^*}$ system is conformally 
invariant on the line bundle $\Cal{L}(-\gl_\fq)$. 
Thus $s_0 = -1$. Therefore it follows from (\ref{Eqn:GVM2})
that we have
\begin{equation*}
\varphi_{\Omega_2}: M_\fq(-2\ga_\fq +\gl_\fq+ \rho) \to M_\fq(\gl_\fq+\rho).
\end{equation*}

\begin{Prop}\label{Prop:StdType2}
If $\fq$ is the maximal parabolic subalgebra of type 
$B_n(n)$, $C_n(i)$ for $2\leq i \leq n-1$, or $F_4(4)$
then the standard map $\varphi_{std}$ from $M_\fq(-2\ga_\fq+\gl_\fq + \rho)$ 
to $M_\fq(\gl_\fq+\rho)$ is non-zero. 
\end{Prop}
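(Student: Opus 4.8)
The plan is to deduce this immediately from Proposition~\ref{Prop:Std2}, checking that its hypotheses are satisfied here with $s_0 = -1$. First I would record the data supplied by the discussion preceding the statement: for each of the three parabolic subalgebras $B_n(n)$, $C_n(i)$ with $2 \le i \le n-1$, and $F_4(4)$, the relevant special constituent is of type~2, so $V(\mu+\ge) = V(2\mu)$ and hence $V(\mu+\ge)^* = V(-2\ga_\fq)$, i.e. $\nu = -2\ga_\fq$ in (\ref{Eqn:GVM2}); and by Theorem~\ref{Thm8.3.1}(2) the $\Omega_2$ system is conformally invariant precisely on $\Cal{L}(-\gl_\fq)$, so $s_0 = -1$. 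Thus the two generalized Verma modules in the statement are $M_\fq(-2\ga_\fq + \gl_\fq + \rho)$ and $M_\fq(\gl_\fq + \rho)$.

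Next I would observe that these are exactly the modules occurring in Proposition~\ref{Prop:Std2} for the value $s_0 = -1$: indeed $1 - s_0 = 2 \in 1 + \Z_{\geq 0}$, while $-(1-s_0)\ga_\fq - s_0\gl_\fq + \rho = -2\ga_\fq + \gl_\fq + \rho$ and $-s_0\gl_\fq + \rho = \gl_\fq + \rho$. Hence Proposition~\ref{Prop:Std2} applies verbatim and yields $\varphi_{std}(1\otimes v_h) = c\, X_{-\ga_\fq}^{\,2}\otimes 1_{\gl_\fq}$ for some nonzero constant $c$; in particular $\varphi_{std}\neq 0$, which is the assertion. That essentially completes the proof.

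There is no real obstacle here beyond bookkeeping; the one point that must be got right is the sign of the special value, namely that $s_0 = -1$ rather than $+1$, since it is precisely this that makes the exponent $1 - s_0 = 2$ a positive integer, which is what Proposition~\ref{Prop:Map} (and through it Proposition~\ref{Prop:Std2}) needs in order to produce the singular vector $X_{-\ga_\fq}^{\,2}\otimes 1_{\gl_\fq}$. Note that, unlike the analysis required for the other two families treated in this section, the type~2 case needs no link-theoretic input (Theorem~\ref{Thm:Std}, Proposition~\ref{Prop:Std}): the relevant singular vector lives on a single $\ga_\fq$-string, so non-vanishing of the standard map is visible directly.
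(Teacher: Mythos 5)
Your proposal is correct and follows exactly the paper's own argument: the paper proves this proposition by a one-line appeal to Proposition~\ref{Prop:Std2} with $s_0 = -1$, and your verification that $1 - s_0 = 2 \in 1 + \Z_{\geq 0}$ and that the modules match is precisely the bookkeeping that citation implicitly requires.
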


\begin{proof}
This follows from Proposition \ref{Prop:Std2}
with $s_0 = -1$.
\end{proof}

In Section 7.3 of \cite{KuboThesis1}, 
it is observed that if $Y^*_l$ is a lowest weight vector for $V(2\mu)^*$
then the differential operator $\Omega_2(Y^*_l)$ is of the form
\begin{equation*}\label{Eqn:Type22}
\Omega_2(Y^*_l) = a R(X_{-\mu})^2,
\end{equation*}
for some constant $a$.
Therefore, the element $\omega_2(Y^*_l)$ in 
$\gs(\Sym^2(\bar\fn)) \subset \Cal{U}(\bar{\fn})$
that corresponds to $\Omega_2(Y^*_l)$ under $R$
in (\ref{Eqn:Comp}) is of the form
\begin{equation}\label{Eqn:Lowest}
\omega_2(Y^*_l) = a  X^2_{-\mu}.
\end{equation}
Thus the simple $\fl$-submodule 
$F(\Omega_2|_{V(2\mu)^*})$ of 
$M_\fq(\gl_\fq + \rho)^{\fn}=
\big( \Cal{U}(\fg)\otimes_{\Cal{U}(\fq)}\C_{\gl_\fq}\big)^{\fn}$ 
has lowest weight $X^2_{-\mu} \otimes 1_{\gl_\fq}$.

\begin{Thm}\label{Thm:MapO2Type2}
Let $\fq$ be a maximal parabolic 
subalgebra of quasi-Heisenberg type,
listed in (\ref{Eqn4.0.1}) or (\ref{Eqn4.0.2}).
If the special constituent $V(\mu+\ge)$ is of type 2 then
the map $\varphi_{\Omega_2}$ is standard.
\end{Thm}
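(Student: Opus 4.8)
The plan is to use the criterion of Proposition \ref{Prop:Std}: the map $\varphi_{\Omega_2}$ is standard if and only if the standard map $\varphi_{std}$ from $M_\fq(-2\ga_\fq + \gl_\fq + \rho)$ to $M_\fq(\gl_\fq + \rho)$ is non-zero, which by Proposition \ref{Prop:StdType2} already holds, \emph{provided} one also checks that $\varphi_{\Omega_2}$ and $\varphi_{std}$ agree up to a scalar. By Proposition \ref{Prop:Std2} with $s_0 = -1$, the standard map sends the highest weight vector $1 \otimes v_h$ of $M_\fq(-2\ga_\fq + \gl_\fq + \rho)$ to $c\, X_{-\ga_\fq}^{2} \otimes 1_{\gl_\fq} \neq 0$ for some non-zero $c$. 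So it suffices to show that the highest weight vector $v_h$ of the simple $\fl$-module $F(\Omega_2|_{V(2\mu)^*}) \subset M_\fq(\gl_\fq + \rho)$ is a non-zero scalar multiple of $X_{-\ga_\fq}^{2} \otimes 1_{\gl_\fq}$; for then $\varphi_{\Omega_2}(1 \otimes v_h) = v_h$ is a non-zero scalar multiple of $\varphi_{std}(1\otimes v_h)$, and since each space of homomorphisms between the relevant (generalized) Verma modules is at most one-dimensional, this forces $\varphi_{\Omega_2}$ to be standard.

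First I would identify the highest weight vector of $F(\Omega_2|_{V(2\mu)^*})$ concretely. From (\ref{Eqn:Lowest}), the \emph{lowest} weight vector of this simple $\fl$-submodule is $\omega_2(Y^*_l) \otimes 1_{\gl_\fq} = a\, X_{-\mu}^{2} \otimes 1_{\gl_\fq}$, where $\mu$ is the highest root of $\fg(1)$. Dually, applying the highest weight vector $Y^*_h$ of $V(2\mu)^* = V(-2\ga_\fq)$ instead of $Y^*_l$, the corresponding element $\omega_2(Y^*_h) \in \gs(\Sym^2(\bar\fn))$ should, by the same computation carried out in Section 7.3 of \cite{KuboThesis1} but now at the opposite extreme weight, be of the form $\omega_2(Y^*_h) = a' X_{-\ga_\fq}^{2}$ for some constant $a'$, since $\ga_\fq$ is the lowest weight of $\fg(1)$ and hence $-\ga_\fq$ the highest weight of $\fg(-1) \cong \fg(1)^*$. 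The key point is to verify that $a' \neq 0$; this is automatic because the $L_0$-intertwining operator $\Omega_2|_{V(2\mu)^*}$ is non-zero (the constituent is special) and $L$-equivariant, so it cannot kill the highest weight vector $Y^*_h$ of the simple module $V(2\mu)^*$.

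Concretely, I would run the weight bookkeeping: $\tau_2(X) = \tfrac12(\ad(X)^2 \otimes \mathrm{Id})\omega$ with $\omega = \sum_j X_{-\gamma_j} \otimes X_{\gamma_j}$, so that $Y^*_h(\tau_2(X))$ is (up to constants) a sum of terms $\kappa(X_{\ga}, \ad(X)^2 X_{-\gamma_j})\,\kappa(X_{\gb}, X_{\gamma_j})$ with $\ga \in \gD(\fg(-1))$ and $\gb \in \gD(\fg(-2))$ appearing in the expansion of $Y^*_h$. Since $Y^*_h$ has weight $-2\ga_\fq$ inside $\fg(-1)^* \otimes \fg(-2)^*$, pairing forces the relevant polynomial on $\fg(1)$ to be supported on the single lowest weight line $\C X_{\ga_\fq} \subset \fg(1)$, giving $\ttau_2|_{V(2\mu)^*}(Y^*_h) = a'\, (\text{coordinate dual to } X_{\ga_\fq})^2$ with $a' \neq 0$; passing through $\Sym^2(\fg(-1)) \hookrightarrow \Cal{U}(\bar\fn) \xrightarrow{R} \mathbb{D}(\Cal{L}_s)^{\bar\fn}$ and the isomorphism (\ref{Eqn271}) then yields $\omega_2(Y^*_h) \otimes 1_{\gl_\fq} = a'\, X_{-\ga_\fq}^2 \otimes 1_{\gl_\fq}$. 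Since $X_{-\ga_\fq}^2 \otimes 1_{\gl_\fq}$ is manifestly a highest weight vector for the $\fl$-action on $M_\fq(\gl_\fq + \rho)$ (it is annihilated by every $X_{\ga}$, $\ga \in \Pi(\fl)$, as $\ga_\fq - \ga \notin \gD$), it must equal $v_h$ up to scalar, and the conclusion follows.

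I expect the main obstacle to be the last displayed weight-counting verification — namely, confirming that $\omega_2(Y^*_h)$ really is a scalar times $X_{-\ga_\fq}^{2}$ with non-zero scalar, rather than involving other root vectors in $\fg(-1)$. This is the exact analogue at the highest weight of what \cite{KuboThesis1} establishes at the lowest weight in (\ref{Eqn:Type22})--(\ref{Eqn:Lowest}), so I would either invoke that computation by symmetry (applying the longest Weyl group element of $\fl$, which swaps highest and lowest weights of $\fg(1)$ and of $V(2\mu)^*$) or redo it directly; the $L$-equivariance of $\Omega_2|_{V(2\mu)^*}$ makes the symmetry argument clean and avoids recomputing structure constants.
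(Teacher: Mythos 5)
Your proposal is correct and follows the same overall reduction as the paper --- both arguments come down to showing that the highest weight vector $v_h$ of $F(\Omega_2|_{V(2\mu)^*})$ is a non-zero scalar multiple of $X_{-\ga_\fq}^2\otimes 1_{\gl_\fq}$ and then invoking Proposition \ref{Prop:Std2} with $s_0=-1$ (via Proposition \ref{Prop:StdType2}) --- but you establish that identification by a genuinely different mechanism. The paper starts from the known lowest weight vector $X_{-\mu}^2\otimes 1_{\gl_\fq}$ of (\ref{Eqn:Lowest}) and applies raising operators from $\Cal{U}(\fl)$ to it, case by case: it carries out $B_n(n)$ explicitly, computing $X_{\vep_1-\vep_n}^2\cdot(X_{-\vep_1}^2\otimes 1_{\gl_n})=2N_{\vep_1-\vep_n,-\vep_1}^2X_{-\vep_n}^2\otimes 1_{\gl_n}$, and refers to the thesis for $C_n(i)$ and $F_4(4)$; this shows $X_{-\ga_\fq}^2\otimes 1_{\gl_\fq}$ lies in $F(\Omega_2|_{V(2\mu)^*})$ and, having the highest weight $-2\ga_\fq+\gl_\fq$, must be proportional to $v_h$. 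You instead evaluate $\omega_2$ on the highest weight vector $Y^*_h$ of $V(2\mu)^*$ directly: $\omega_2(Y^*_h)$ lies in $\gs(\Sym^2(\fg(-1)))$ and has weight $-2\ga_\fq$, and since every weight of the irreducible $\fl$-module $\fg(1)$ is $\ga_\fq$ plus a non-negative integer combination of $\Pi(\fl)$, the weight space of $\Sym^2(\fg(-1))$ of weight $-2\ga_\fq$ is one-dimensional, spanned by $X_{-\ga_\fq}^2$; the scalar is non-zero because the non-zero $L$-intertwining operator $\Omega_2|_{V(2\mu)^*}$ out of a simple module is injective. Your route is uniform in $\fq$, avoids structure constants and the appeal to (\ref{Eqn:Lowest}), and yields a cleaner argument; the paper's route is more computational but stays entirely within the explicit data already assembled for the type 2 cases. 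Both are complete, so I see no gap to repair.
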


\begin{proof}
In order to prove that $\varphi_{\Omega_2}$ is standard, 
by Proposition \ref{Prop:StdType2}, it suffices to show that
$X^2_{-\ga_\fq}\otimes 1_{\gl_\fq}$ is a highest weight vector for 
$F(\Omega_2|_{V(2\mu)^*})$. 
Since $F(\Omega_2|_{V(2\mu)^*})$ has highest weight 
$\nu-s_0\gl_\fq = -2\ga_\fq+\gl_\fq$, it is enough to show that 
$X^2_{-\ga_\fq}\otimes 1_{\gl_\fq}$ is in $F(\Omega_2|_{V(2\mu)^*})$.
We know that
a lowest weight vector for $F(\Omega_2|_{V(2\mu)^*})$
is $X^2_{-\mu} \otimes 1_{\gl_\fq}$. This will allow us to show that 
$X_{-\ga_\fq}^2 \otimes 1_{\gl_\fq}$ is in $F(\Omega_2|_{V(2\mu)^*})$.
We do so in a case-by-case manner. 
Since the arguments are similar for each case, we show only 
the case $V(\mu+\geg)$ for $B_n(n)$. (For the other cases
see Section 8.3 in \cite{KuboThesis}.)
In the standard realization of the roots
we have $\mu=\vep_1$, $\ga_\fq = \ga_n = \vep_n$, and 
\begin{equation*}
\gD^+(\fl) = \{\vep_j-\vep_k \; | \; 1\leq j < k \leq n\}
\end{equation*}
(see Appendix \ref{chap:Data}).
Thus,
\begin{equation*}
X^2_{-\mu}\otimes 1_{\gl_\fq} = X^2_{-\vep_1} \otimes 1_{\gl_n}
\; \text{ and } \;
X^2_{-\ga_\fq}\otimes 1_{\gl_\fq} = X^2_{-\vep_n}\otimes 1_{\gl_n}.
\end{equation*}
A direct computation shows that 
\begin{equation*}
X^2_{\vep_1-\vep_n}\cdot (X^2_{-\vep_1} \otimes 1_{\gl_n})
=2N_{\vep_1 -\vep_n, -\vep_1}^2X^2_{-\vep_n}\otimes 1_{\gl_n},
\end{equation*}
where $N_{\vep_1 -\vep_n, -\vep_1}$ is the constant so that
$[X_{\vep_1 -\vep_n}, X_{-\vep_1}] = N_{\vep_1 -\vep_n, -\vep_1}X_{-\vep_n}$.
(See the beginning of Section \ref{SS:Omega1Hom}.)
Therefore, as $X_{\vep_1-\vep_n} \in \fl$,
we have $X^2_{-\ga_\fq} \otimes 1_{\gl_\fq}
=X^2_{-\vep_n}\otimes 1_{\gl_n} \in F(\Omega_2|_{V(2\mu)^*})$.
\qedhere

\end{proof}

\subsection{The positive integer special value case} 

Next we handle the case that the special value $s_0$
is a positive integer.

\begin{Thm}\label{Thm:MapO2Int}
Let $\fq$ be a maximal parabolic subalgebra of quasi-Heisenberg type,
listed in (\ref{Eqn4.0.1}) or (\ref{Eqn4.0.2}).
If the special value $s_0$ is a positive integer
then the standard map from $M_\fq(\nu-s_0\gl_\fq +\rho)$
to $M_\fq(-s_0\gl_\fq +\rho)$ is zero. Consequently,
the map $\varphi_{\Omega_2}$ is non-standard.
\end{Thm}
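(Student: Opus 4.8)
The plan is to apply Proposition \ref{Prop:Std}: the standard map from $M_\fq(\nu-s_0\gl_\fq+\rho)$ to $M_\fq(-s_0\gl_\fq+\rho)$ is zero precisely when there is some $\ga \in \Pi(\fl)$ such that $-\ga-s_0\gl_\fq+\rho$ is linked to $\nu-s_0\gl_\fq+\rho$. So I want to exhibit a single reflection $s_{\gb}$ (or short chain of reflections) and a suitable simple root $\ga \in \Pi(\fl)$ realizing such a link. Since $F(\Omega_2|_{V(\mu+\ge)^*})$ has highest weight $\nu-s_0\gl_\fq$, we have $M(\nu-s_0\gl_\fq+\rho)\subset M(-s_0\gl_\fq+\rho)$, so one direction of the BGG link (Theorem \ref{Thm:Link}) is already available; I need to reroute that link through some $s_\ga(-s_0\gl_\fq+\rho)=-\ga-s_0\gl_\fq+\rho$ with $\ga\in\Pi(\fl)$.

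The key computation is to identify the weight $\nu$. By Table \ref{T55} the positive-integer special value cases are exactly the type 1a constituents (the $B_n(i)$, $D_n(i)$, $E_6$, $E_7$, $E_8$ entries of Table \ref{T93} with integral values), and there $\nu = (\mu+\ge)^* = -w_0^{\fl}(\mu+\ge)$, the lowest weight of $V(\mu+\ge)$. Using the explicit highest weights from Tables \ref{T51} and \ref{T52} together with the root data in Appendix \ref{chap:Data}, I would compute $\nu$ and $s_0$ case by case. Then I would look for a positive root $\gb$ with $\IP{-s_0\gl_\fq+\rho}{\gb^\vee}\in\Z_{>0}$ such that $s_\gb(-s_0\gl_\fq+\rho) = -\ga - s_0\gl_\fq + \rho + (\text{something in the }\Pi(\fl)\text{-cone})$, or more directly: produce $\ga\in\Pi(\fl)$ and a link $(\gb_1,\dots,\gb_t)$ from $-\ga-s_0\gl_\fq+\rho$ to $\nu-s_0\gl_\fq+\rho$. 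The natural candidate is $t=1$: find $\gb\in\gD^+$ with $s_\gb(-\ga-s_0\gl_\fq+\rho)=\nu-s_0\gl_\fq+\rho$ and $\IP{-\ga-s_0\gl_\fq+\rho}{\gb^\vee}\in\Z_{\geq 0}$. Since $\ga_\fq\in\Pi\setminus\Pi(\fl)$ accounts for the $-s_0\gl_\fq$ part and $\ga\in\Pi(\fl)$, this should reduce to a statement purely about the $[\fl,\fl]$-weight $\nu$ and the dominant weight $-\ga$; I expect $\gb$ to be a highest root of $\flg$ or a closely related root, with $\nu$ being essentially $-2(\text{fundamental weight})$ type data making $\nu+\rho$ a reflection of $-\ga+\rho$ inside $[\fl,\fl]$.

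I would organize the proof by first reducing, via Proposition \ref{Prop:Std}, to the combinatorial link statement, then splitting into the classical families $B_n(i)$ ($3\le i\le n-2$ with its $V(\mu+\geng)=\C$-type value $s_0=1$, plus the $V(\mu+\geg)$ cases with integer value), $D_n(i)$, and the exceptional cases $E_6(3),E_6(5),E_7(2),E_7(6),E_8(1)$. For each, plug the weights from Tables \ref{T51}, \ref{T52}, \ref{T93} and the $\rho$, coroot data from Appendix \ref{chap:Data} into Definition \ref{Def:Link} and verify conditions (1) and (2) for an explicitly chosen chain. The final sentence then follows: since the standard map is zero while $\varphi_{\Omega_2}\neq 0$ (from Section \ref{SS24}), $\varphi_{\Omega_2}$ cannot be standard, i.e.\ it is non-standard.

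The main obstacle I anticipate is finding a \emph{uniform} linking chain rather than doing all the cases by hand: the type 1a special values in Table \ref{T93} vary ($(n-i-\tfrac12)\gl_i$, $(n-i-1)\gl_i$, various small integers for the exceptionals), and $\Pi(\fl)$ differs in each case, so the choice of $\ga$ and of the linking roots $\gb_i$ may need to be tailored. A secondary subtlety is checking that the relevant inner products $\IP{\gd_{i-1}}{\gb_i^\vee}$ land in $\Z_{\geq 0}$ and not just in $\Z$ — this is where the hypothesis that $s_0$ is a \emph{positive} integer (as opposed to an arbitrary integer or half-integer) is used, and I would make sure the chain is built so that positivity is transparent, probably by taking $\ga=\ga_\gamma$ (the simple root adjacent to the highest root, which lies in $\Pi(\fl)$) and $\gb$ within $\flg$ so that the interaction with $\gl_\fq$ drops out.
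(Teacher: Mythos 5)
Your reduction to Proposition \ref{Prop:Std} and your plan to argue case by case with the weight $\nu=-w_0(\mu+\ge)$ computed from Tables \ref{T51}, \ref{T52} and the Appendix are exactly the paper's framework. The gap is in the linking mechanism you propose. Your preferred candidate is a single reflection $s_\gb$ with $\gb$ ``a highest root of $\flg$ or a closely related root,'' chosen ``within $\flg$ so that the interaction with $\gl_\fq$ drops out.'' This cannot work. In every positive-integer case the weight $\nu$ has $\ga_\fq$-multiplicity $-2$ (it is the lowest weight of a constituent sitting inside $\fg(-1)\otimes\fg(-1)$; concretely the paper writes $\nu=-2\gb-\ga'-\ga''$ with $\gb\in\gD(\fg(1))$ and $\ga',\ga''\in\Pi(\fl)$). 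Since the source weight $-\ga-s_0\gl_\fq+\rho$ and the target $\nu-s_0\gl_\fq+\rho$ differ by $\nu+\ga$, whose $\ga_\fq$-coefficient is $-2$, any link must reflect through roots \emph{outside} $\gD(\fl)$; reflections inside $\flg$ change nothing in the $\ga_\fq$-direction. Moreover a single reflection also fails for roots outside $\fl$: one would need $-\nu-\ga=2\gb+\ga'+\ga''-\ga$ to be a positive multiple of a root, and (e.g.\ in $E_6(3)$, where $\nu=-2\ga_3-\ga_1-\ga_4$) the candidates $2\ga_3+\ga_1$ and $2\ga_3+\ga_4$ are neither roots nor multiples of roots.

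The idea you are missing is the two-step chain: write $\nu=-2\gb-\ga'-\ga''$ as above, take $\ga=\ga''$, and show that the sequence $(\ga',\gb)$ links $-\ga''-s_0\gl_\fq+\rho$ to $\nu-s_0\gl_\fq+\rho$. The first reflection $s_{\ga'}$ subtracts $\ga'$ (the inner product against $(\ga')^\vee$ is $1$ since $\IP{\gl_\fq}{(\ga')^\vee}=0$ and $\IP{\rho}{(\ga')^\vee}=1$), and the second reflection $s_\gb$ with $\gb\in\gD(\fg(1))$ subtracts $2\gb$ because $\IP{-\ga'-\ga''-s_0\gl_\fq+\rho}{\gb^\vee}=2$ in each case; the positivity of these two inner products is where the hypothesis on $s_0$ enters. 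Your concluding sentence (zero standard map plus $\varphi_{\Omega_2}\neq 0$ implies non-standard) is correct as stated.
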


\begin{proof}
By Proposition \ref{Prop:Std}, to show that the standard map
is zero, it suffices to show that there exists $\ga \in \Pi(\fl)$
so that $-\ga-s_0\gl_\fq+\rho$ is linked to $\nu-s_0\gl_\fq+\rho$.
We achieve it by a case-by-case observation.
By Table \ref{T93}, the following are the cases under consideration:
\begin{enumerate}
\item $V(\mu+\geng)$ for $B_n(i)$ $(3\leq i \leq n-2)$
\item $V(\mu+ \geg)$ and $V(\mu+\geng)$ for $D_n(i)$
$(3 \leq i \leq n-3)$
\item $V(\mu+\geg)$ and $V(\mu+\geng)$ for $E_6(3)$
\item $V(\mu+\geg)$ and $V(\mu+\geng)$ for $E_6(5)$
\item $V(\mu+\geg)$ for $E_7(2)$
\item $V(\mu+\geg)$ and $V(\mu+\geng)$ for $E_7(6)$
\item $V(\mu+\geg)$ for $E_8(1)$
\end{enumerate}

Our strategy is to first observe that the highest weight $\nu$ 
for $V(\mu+\ge)^*$ is of the form 
\begin{equation*}
\nu=-2\gb-\ga'-\ga''  
\end{equation*}
for some $\gb \in \gD(\fg(1))$ and $\ga', \ga'' \in \Pi(\fl)$.
We then show that
the sequence $(\ga', \gb)$ links $-\ga''-s_0\gl_\fq+\rho$ 
to $(-2\gb-\ga'-\ga'')-s_0\gl_\fq+\rho$. 
Here we only show three cases, namely, 
$V(\mu+\geng)$ for $B_n(i)$,
$V(\mu+ \geg)$ for $D_n(i)$ $(3 \leq i \leq n-3)$,
and $V(\mu+\geg)$ for $E_6(3)$.
Other cases can be shown similarly.
(For some details for the other cases see Section 8.3 in \cite{KuboThesis}.)
\vskip 0.1in

\underline{1. $V(\mu+\geng)$ for $B_n(i)$ for $3\leq i \leq n-2$:}
Since, by Table \ref{T93}, the special value $s_0$ is $s_0 = 1$,
we wish to show that there is $\ga \in \Pi(\fl)$ so that 
$-\ga -\gl_i + \rho$ is linked to $\nu-\gl_i + \rho$.
First we find the highest weight $\nu$ for $V(\mu+\geng)^*$.
Observe that we have $\gD^+(\fl) = \gD^+(\flg) \cup \gD^+(\flng)$
with
\begin{equation*}
\gD^+(\flg) = \{ \vep_j-\vep_k \; | \; 1 \leq j < k \leq i\}
\end{equation*}
and 
\begin{equation*}
\gD^+(\flng) = \{ \vep_j \pm \vep_k \; | \; i+1 \leq j < k \leq n \}
\cup \{ \vep_j \; | \; i+1 \leq j \leq n\}
\end{equation*}
in the standard realization of the roots (see Appendix \ref{chap:Data}).
Since
\begin{equation*}
\gD(\fz(\fn)) = \{ \vep_j + \vep_k \; | \; 1\leq j < k \leq i\},
\end{equation*}
the simple $\fl$-module $\fz(\fn)$ has lowest weight $\vep_{i-1} + \vep_i$.
As $V(\mu+\geng) = \flng \otimes \fz(\fn)$, we have
$V(\mu+\geng)^*=\flng^*\otimes \fz(\fn)^*=\flng\otimes \fz(\fn)^*$.
Since $\flng$ has highest weight $\vep_{i+1}+\vep_{i+2}$,
this shows that the highest weight $\nu$ for $V(\mu+\geng)^*$ is 
\begin{equation*}
\nu
=(\vep_{i+1}+\vep_{i+2}) - (\vep_{i-1}+\vep_i)
=-\vep_{i-1}-\vep_i+\vep_{i+1}+\vep_{i+2}.
\end{equation*}
We have
\begin{equation*}
-\vep_{i-1}-\vep_i+\vep_{i+1}+\vep_{i+2}
=-2(\vep_i-\vep_{i+1})-(\vep_{i-1}-\vep_i) - (\vep_{i+1}-\vep_{i+2})
\end{equation*}
with $\vep_i-\vep_{i+1} \in \gD(\fg(1))$ and 
$\vep_{i-1}-\vep_i$, $\vep_{i+1}-\vep_{i+2} \in \Pi(\fl)$
(see Appendix \ref{chap:Data}). 
Now we claim that $(\vep_{i-1}-\vep_i, \vep_i-\vep_{i+1})$
links $-(\vep_{i+1}-\vep_{i+2})-\gl_i+\rho$ to 
$-2(\vep_i-\vep_{i+1})-(\vep_{i-1}-\vep_i)-(\vep_{i+1}-\vep_{i+2})-\gl_i+\rho$.
This is to show that 
\begin{equation*}
s_{\vep_i-\vep_{i+1}}s_{\vep_{i-1}-\vep_i}(-(\vep_{i+1}-\vep_{i+2})-\gl_i+\rho)
=-2(\vep_i-\vep_{i+1})-(\vep_{i-1}-\vep_i)-(\vep_{i+1}-\vep_{i+2})-\gl_i+\rho
\end{equation*}
with
\begin{equation*}
\IP{-(\vep_{i+1}-\vep_{i+2})-\gl_i+\rho}{(\vep_{i-1}-\vep_i)^\vee} \in \Z_{\geq 0}
\end{equation*}
and
\begin{equation*}
\IP{s_{\vep_{i-1}-\vep_i}(-(\vep_{i+1}-\vep_{i+2})-\gl_i+\rho)}{(\vep_i-\vep_{i+1})^\vee}
\in \Z_{\geq 0}.
\end{equation*}
(See Definition \ref{Def:Link}.)
As $\vep_{i-1}-\vep_i \in \Pi(\fl)$, we have 
$\IP{\gl_i}{(\vep_{i-1}-\vep_i)^\vee} = 0$. Since
$\IP{\rho}{(\vep_{i-1}-\vep_i)^\vee} = 1$, it follows that
\begin{equation*}
\IP{-(\vep_{i+1}-\vep_{i+2})-\gl_i+\rho}{(\vep_{i-1}-\vep_i)^\vee} = 1 \in \Z_{\geq 0}.
\end{equation*}
Thus,
\begin{equation*}
s_{\vep_{i-1}-\vep_i}(-(\vep_{i+1}-\vep_{i+2})-\gl_i+\rho)
=-(\vep_{i-1}-\vep_i)-(\vep_{i+1}-\vep_{i+2})-\gl_i+\rho.
\end{equation*}
Next, as $\vep_i-\vep_{i+1}$ is the simple root that determines the parabolic $\fq$,
we have $\IP{\gl_i}{(\vep_i-\vep_{i+1})^\vee} = 1$. Since 
$\IP{\rho}{(\vep_i-\vep_{i+1})^\vee} = 1$, it follows that 
\begin{align*}
&\IP{s_{\vep_{i-1}-\vep_i}(-(\vep_{i+1}-\vep_{i+2})-\gl_i+\rho)}{(\vep_i-\vep_{i+1})^\vee}\\
&=\IP{-(\vep_{i-1}-\vep_i)-(\vep_{i+1}-\vep_{i+2})-\gl_i+\rho}{(\vep_i-\vep_{i+1})^\vee}\\
&=2 \in \Z_{\geq 0}.
\end{align*}
Therefore,
\begin{align*}
&s_{\vep_i-\vep_{i+1}}s_{\vep_{i-1}-\vep_i}(-(\vep_{i+1}-\vep_{i+2})-\gl_i+\rho)\\
&=s_{\vep_i-\vep_{i+1}}(-(\vep_{i-1}-\vep_i)-(\vep_{i+1}-\vep_{i+2})-\gl_i+\rho)\\
&=-2(\vep_i-\vep_{i+1})-(\vep_{i-1}-\vep_i)-(\vep_{i+1}-\vep_{i+2})-\gl_i+\rho.
\end{align*}

\vskip 0.1in

2. \underline{$V(\mu+ \geg)$ for $D_n(i)$
for $3 \leq i \leq n-3$:}
Since, by Table \ref{T93}, the special value $s_0$ is $s_0 = n-i-1$,
we want to show that there is $\ga \in \Pi(\fl)$ so that 
$-\ga -(n-i-1)\gl_i + \rho$ is linked to $\nu-(n-i-1)\gl_i + \rho$.
By Table \ref{T51}, we have $\mu+\geg = 2\vep_1$.
Observe that if $\ga_j = \vep_j-\vep_{j+1}$ 
and $w_j = s_{\ga_1}s_{\ga_2}\cdots s_{\ga_j}$ for $1 \leq j \leq i-1$  
then the longest element $w_0$ of the Weyl group of type $A_{i-1}$
may be expressed as $w_0 = w_{i-1}w_{i-2}\cdots w_{1}$.
It is shown in Section 6 of \cite{KuboThesis1} that 
$V(\mu+\geg)$ is an $\flg$-submodule of $\flg\otimes \fz(\fn)$.
Since $\flg$ is of type $A_{i-1}$ (see Appendix \ref{chap:Data}), 
the highest weight $\nu$ for 
$V(\mu+\geg)^*$ is then given by
\begin{equation*}
\nu = -w_0(2\vep_1) = -2\vep_i.
\end{equation*}
We have
\begin{equation*}
-2\vep_i = -2(\vep_i-\vep_{n-1}) - (\vep_{n-1} - \vep_{n}) \
- (\vep_{n-1} + \vep_{n})
\end{equation*}
with $\vep_i-\vep_{n-1} \in \gD(\fg(1))$ and $\vep_{n-1} - \vep_{n}$,
$\vep_{n-1} + \vep_{n} \in \Pi(\fl)$.
Then a direct computation shows that 
$(\vep_{n-1} - \vep_{n}, \vep_i-\vep_{n-1})$ links 
$-(\vep_{n-1} + \vep_{n})-(n-i-1)\gl_i + \rho$ to 
$-2\vep_i-(n-i-1)\gl_i + \rho$.

\vskip 0.1in

\underline{3. $V(\mu+\geg)$ for $E_6(3)$:}
Since, by Table \ref{T93}, the special value $s_0$ is $s_0 = 1$,
we want to show that there is $\ga \in \Pi(\fl)$ so that 
$-\ga -\gl_3 + \rho$ is linked to $\nu-\gl_3 + \rho$.
By Table \ref{T52}, we have 
\begin{equation*}
\mu+\geg 
=\ga_1+2\ga_2+2\ga_3+4\ga_4+3\ga_5+2\ga_6.
\end{equation*}
As $V(\mu+\geg)$ is a simple $\flg$-submodule of $\flg\otimes \fz(\fn)$,
if $w_0$ is the longest element of the Weyl group of $\flg$ then,
by using \textsf{LiE}, the highest weight $\nu$ for $V(\mu+\geg)^*$ is
given by
\begin{align*}
\nu 
&= -w_0(\ga_1+2\ga_2+2\ga_3+4\ga_4+3\ga_5+2\ga_6)\\
&=-2\ga_3-\ga_1-\ga_4
\end{align*}
with $\ga_3 \in \gD(\fg(1))$ and $\ga_1$, $\ga_4 \in \Pi(\fl)$.
Now a direct computation shows that 
$(\ga_1, \ga_3)$ links $-\ga_4-\gl_3+\rho$ to 
$(-2\ga_3-\ga_1-\ga_4)-\gl_3+\rho$.
\end{proof}

\subsection{The $V(\mu+\geg)$ case for $B_n(i)$ for $3\leq i \leq n-1$} 

Now we consider the case 
$V(\mu+\geg)$ for $B_n(i)$ for $3\leq i \leq n-1$.
By Table \ref{T93}, the special value $s_0$ is
$s_0 = n-i-(1/2)$ for $1\leq i \leq n-1$. 
(Note that when $i=n-1$, we have $s_0 = 1/2 = n-(n-1)-(1/2)$).
By the same argument used for the case $V(\mu+\geg)$ 
of $D_n(i)$ in the proof of Theorem \ref{Thm:MapO2Int},
the highest weight $\nu$ for $V(\mu+\geg)^*$ is
$\nu = -2\vep_i$.
Therefore, we have
\begin{equation}\label{Eqn:Form15}
\varphi_{\Omega_2}: 
M_\fq( -2\vep_i-(n-i-(1/2))\gl_i+\rho) 
\to M_\fq (-(n-i-(1/2))\gl_i+\rho).
\end{equation}

We first show that the standard map $\varphi_{std}$ is non-zero.
If $\gb = \sum_{\ga \in \Pi}m_{\ga} \ga \in 
\sum_{\ga \in \Pi}\Z \ga$
then we say that $|m_{\ga}|$ are 
the \emph{multiplicities} of $\ga$ in $\gb$.

\begin{Prop}\label{Prop:Prop1}
If $\fq$ is the maximal parabolic subalgebra
of type $B_n(i)$ with $3 \leq i \leq n-1$ then
the standard map $\varphi_{std}$ from
$M_\fq( -2\vep_i-(n-i-(1/2))\gl_i+\rho)$ 
to $M_\fq( -(n-i-(1/2))\gl_i+\rho)$
is non-zero.
\end{Prop}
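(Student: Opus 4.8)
The plan is to apply Proposition~\ref{Prop:Std}. By that proposition, together with the description of $\varphi_{\Omega_2}$ preceding \eqref{Eqn:Form15} (so $\nu=-2\vep_i$ and $s_0=n-i-\tfrac12$), the standard map from $M_\fq(-2\vep_i-s_0\gl_i+\rho)$ to $M_\fq(-s_0\gl_i+\rho)$ is zero if and only if there is some $\ga\in\Pi(\fl)$ for which $-\ga-s_0\gl_i+\rho$ is linked to $-2\vep_i-s_0\gl_i+\rho$; so it suffices to exclude every $\ga\in\Pi(\fl)$. First I would write $\zeta:=-s_0\gl_i+\rho$ in the standard coordinates: since $\gl_i=\vep_1+\cdots+\vep_i$ and $\rho=\sum_{j}(n-j+\tfrac12)\vep_j$,
\begin{equation*}
\zeta=\sum_{j=1}^{i}(i-j+1)\vep_j+\sum_{j=i+1}^{n}\bigl(n-j+\tfrac12\bigr)\vep_j,
\end{equation*}
so the first $i$ coordinates of $\zeta$ are the integers $i,i-1,\ldots,1$ and the last $n-i$ are the half-integers $n-i-\tfrac12,\ldots,\tfrac12$. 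Since $\IP{\gl_i}{\ga^{\vee}}=0$ and $\IP{\rho}{\ga^{\vee}}=1$ for $\ga\in\Pi(\fl)$, the weight $-\ga-s_0\gl_i+\rho$ equals $\zeta-\ga=s_{\ga}\zeta$, while $-2\vep_i-s_0\gl_i+\rho=\zeta-2\vep_i=s_{\vep_i}\zeta$; and as $\IP{\zeta}{\vep_i^{\vee}}=2$, the sequence $(\vep_i)$ links $\zeta$ to $\zeta-2\vep_i$, re-deriving $M(-2\vep_i-s_0\gl_i+\rho)\subset M(-s_0\gl_i+\rho)$.

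The structural heart of the argument is to pass to the integral root subsystem $\gD_{[\zeta]}=\{\gb\in\gD:\IP{\zeta}{\gb^{\vee}}\in\Z\}$. Because the first $i$ entries of $\zeta$ are integral and the remaining $n-i$ are half-integral, a direct check shows $\gD_{[\zeta]}$ is the product $B_i\times B_{n-i}$, with the $B_i$-factor built on $\vep_1,\ldots,\vep_i$ and the $B_{n-i}$-factor on $\vep_{i+1},\ldots,\vep_n$; consequently each $\ga\in\Pi(\fl)=\{\ga_1,\ldots,\ga_{i-1}\}\cup\{\ga_{i+1},\ldots,\ga_n\}$ is a root of the first factor (indices $\leq i-1$) or of the second (indices $\geq i+1$). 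I would then observe that, since $\ga$ and $\vep_i$ lie in $\gD_{[\zeta]}$ and $W_{[\zeta]}$ stabilizes $\gD_{[\zeta]}$, the requirement $\IP{\gd_{m-1}}{\gb_m^{\vee}}\in\Z_{\geq 0}$ in Definition~\ref{Def:Link} forces, inductively, any sequence linking $\zeta-\ga$ to $\zeta-2\vep_i$ to consist entirely of roots of $\gD_{[\zeta]}$; since the two factors are supported on disjoint coordinates, such a sequence then splits into a linking sequence within $B_i$ relating the first $i$ coordinates of $\zeta-\ga$ and of $\zeta-2\vep_i$, and one within $B_{n-i}$ relating their last $n-i$ coordinates.

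To conclude I would argue by cases, assuming a linking sequence $(\gb_1,\ldots,\gb_t)$ from $\zeta-\ga$ to $\zeta-2\vep_i$ exists; since every step subtracts a nonnegative multiple of a positive root, $(\zeta-\ga)-(\zeta-2\vep_i)=2\vep_i-\ga\in\Z_{\geq 0}\Pi$. If $\ga=\ga_j$ with $1\leq j\leq i-1$, then $\ga_j$ does not occur in $2\vep_i=2(\ga_i+\ga_{i+1}+\cdots+\ga_n)$, so $2\vep_i-\ga_j$ has a negative simple-root coefficient --- a contradiction. If $\ga=\ga_j$ with $i+1\leq j\leq n$, then $2\vep_i$ involves none of $\vep_{i+1},\ldots,\vep_n$, so the last $n-i$ coordinates of $\zeta-2\vep_i$ agree with those of $\zeta$, while for $\zeta-\ga$ they are those of $\zeta$ minus $\ga$; the $B_{n-i}$-part of the linking sequence would then force $-\ga$ to be a nonnegative combination of positive roots of $B_{n-i}$, which is absurd since $\ga$ itself is one. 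Hence no $\ga\in\Pi(\fl)$ yields the required link, and by Proposition~\ref{Prop:Std} the standard map is non-zero. I expect the main obstacle to be the second case: there the crude ``order'' obstruction $\zeta-2\vep_i\not\leq\zeta-\ga$ is unavailable, so one must genuinely use the splitting $\gD_{[\zeta]}\cong B_i\times B_{n-i}$ and the fact that linking respects it --- setting up and justifying that decomposition is the delicate part.
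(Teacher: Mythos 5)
Your argument is correct, and it reaches the conclusion by a genuinely different route from the paper. The paper also starts from Proposition~\ref{Prop:Std} and assumes a link $(\gb_1,\ldots,\gb_m)$ from $-\ga'+\gd(i)$ to $-2\vep_i+\gd(i)$ exists, but then runs an explicit case analysis on which positive roots can occur as some $\gb_r$: it rules out $\gb_r\in\gD(\fz(\fn))$ and (after first showing not all $\gb_r$ can lie in $\gD^+(\fl)$) the long roots of $\gD(\fg(1))$ by computing $\IP{\gd(i)}{\gb_r^\vee}\notin\Z$, and finally disposes of the short roots $\vep_i$ by a delicate analysis of the possible coefficients $d=1,2$. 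Your proof packages the non-integrality computation once and for all into the identification $\gD_{[\zeta]}\cong B_i\times B_{n-i}$ (which is exactly where the paper's ``long roots of $\gD(\fg(1))$'' exclusion lives, since those are precisely the roots $\vep_j\pm\vep_k$ with $j\le i<k$), and then replaces all the remaining coefficient-chasing by projecting the telescoping identity $\gd_0-\gd_t=\sum_m c_m\gb_m$, $c_m\ge 0$, onto the two blocks of coordinates: for $\ga\in\{\ga_1,\ldots,\ga_{i-1}\}$ the crude dominance obstruction $2\vep_i-\ga\notin\Z_{\ge 0}\Pi$ already suffices, and for $\ga\in\{\ga_{i+1},\ldots,\ga_n\}$ the $B_{n-i}$-projection forces $-\ga\in\Z_{\ge 0}\gD^+(\flng)$, which is absurd. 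In particular you never need to exclude the short roots $\vep_j$ ($j\le i$) from the link at all — they live in the $B_i$ factor and are harmless to your projection — which is the step that costs the paper the most work. The trade-off is that your argument invokes the standard fact that linkage is confined to the integral root subsystem and that this subsystem decomposes orthogonally here; the paper's proof is longer but entirely self-contained at the level of simple-root coefficients. Both are valid; yours is cleaner and would generalize more readily.
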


\begin{proof}
First note that, as $s_0 = n-i-(1/2) \notin \Z$, 
Proposition \ref{Prop:Std2} cannot be applied to this case.
Then, to prove this proposition, we observe Proposition \ref{Prop:Std}; 
we show that there is no $\ga \in \Pi(\fl)$ so that 
$-\ga-(n-i-(1/2))\gl_i+\rho$ is linked to $-2\vep_i-(n-i-(1/2))\gl_i+\rho$. 
For simplicity we write
\begin{equation*}
\gd(i) = -(n-i-(1/2))\gl_i + \rho.
\end{equation*}
Since $\vep_i = \sum_{j=i}^n\ga_j$
with $\ga_j$ simple roots in the standard numbering,
we want to show that there is no $\ga \in \Pi(\fl)$ so that 
$-\ga +\gd(i)$ is linked to $-2\vep_i+\gd(i) = -2\sum_{j=i}^n\ga_j+\gd(i)$.
Suppose that such $\ga' \in \Pi(\fl)$ exists. 
Let $(\gb_1, \ldots, \gb_m)$ be a link from $-\ga' + \gd(i)$ to 
$-2\sum_{j=i}^n\ga_j+\gd(i)$. Without loss of generality,
we assume that for all $j = 1,\ldots, m$,
\begin{equation*}\label{Eqn:Assmp}
\IP{s_{\gb_{j-1}}\cdots s_{\gb_1}(-\ga'+\gd(i))}{\cgb_j} \neq 0.
\end{equation*}
(If $j=1$ then set $s_{\gb_0} = e$, the identity.) 
By the property (2) in Definition \ref{Def:Link}, this means that 
we assume that
\begin{equation}\label{Eqn:Assmp}
\IP{s_{\gb_{j-1}}\cdots s_{\gb_1}(-\ga'+\gd(i))}{\cgb_j} \in 1+\Z_{\geq 0}
\end{equation}
for all $j = 1,\ldots, m$.
Observe that it follows from the property (2) in Definition \ref{Def:Link}
that any weight linked from $-\ga'+\gd(i)$ is of the from
\begin{equation}\label{Eqn:Form}
(-\sum_{\ga \in \Pi}n_\ga \ga) -\ga'+\gd(i) \;
\text{with $n_\ga \in \Z_{\geq 0}$. }
\end{equation} 
We have $\gD^+ = \gD^+(\fl)\cup \gD(\fg(1)) \cup \gD(\fz(\fn))$,
where $\gD^+(\fl)$, $\gD(\fg(1))$, and $\gD(\fz(\fn))$ are the sets of
the positive roots in which $\ga_i$ has multiplicity zero, one, and two, respectively. 
As $(\gb_1, \ldots, \gb_m)$ is a link from $-\ga'+\gd(i)$
to $-2\sum_{j=i}^n\ga_j+\gd(i)$, we have
\begin{equation}\label{Eqn:Form2}
s_{\gb_m}\cdots s_{\gb_1}(-\ga'+\gd(i))=-2\sum_{j=i}^n\ga_j+\gd(i).
\end{equation}
If $\gb_j \in \gD^+(\fl)$ for all $j$ then we would have
\begin{equation*} 
-2\sum_{j=i}^n\ga_j+\gd(i)
=s_{\gb_m}\cdots s_{\gb_1}(-\ga'+\gd(i)) \nonumber \\
=(-\sum_{\ga \in \Pi(\fl)}k_\ga \ga) -\ga'+\gd(i)
\end{equation*}
for some $k_\ga \in \Z_{\geq 0}$. This implies that
\begin{equation}\label{Eqn:Form3}
-2\ga_i - 2\sum_{j=i+1}^n\ga_j
=(-\sum_{\ga \in \Pi(\fl)}k_\ga \ga) -\ga'.
\end{equation}
This is absurd, because,
as $\Pi(\fl) = \Pi\backslash \{\ga_i\}$ and $\ga' \in \Pi(\fl)$,
the simple root $\ga_i$ does not contribute to the right hand side of (\ref{Eqn:Form3}).
Thus, there must exist at least one $\gb_j$ in $(\gb_1, \ldots, \gb_m)$
with $\gb_j \in \gD(\fg(1))\cup\gD(\fz(\fn))$.

Now we show that any $\gb_j$ in 
$(\gb_1, \ldots, \gb_m)$ cannot belong to  $\gD(\fg(1)) \cup \gD(\fz(\fn))$. 
First, suppose that there exists $\gb_r$ in 
$(\gb_1, \ldots, \gb_m)$ with $\gb_r \in \gD(\fz(\fn))$. 
Observe that $\gD(\fz(\fn))$ consists 
of the positive roots $\vep_j+\vep_k$ for $1 \leq j < k \leq i$
(see Appendix \ref{chap:Data}). So $\gb_r$ is 
$\gb_r = \vep_s + \vep_t$ for some $1 \leq s < t \leq i$.
Since each $\vep_l = \sum_{j=l}^n \ga_j$
with $\ga_j$ simple roots,
the positive root $\gb_r = \vep_s + \vep_t$ with
$1 \leq s < t \leq i$ can be expressed as
\begin{equation*}
\gb_r = \vep_s + \vep_t
=\sum_{j=s}^{t-1}\ga_j + 2\sum_{j=t}^{n}\ga_j.
\end{equation*}
If $c = \IP{s_{\gb_{r-1}}\cdots s_{\gb_1}(-\ga'+\gd(i))}{\cgb_r}$ then 
\begin{align}\label{Eqn:Form6}
s_{\gb_r}\cdots s_{\gb_1}(-\ga'+\gd(i))
&=s_{\gb_{r-1}}\cdots s_{\gb_1}(-\ga'+\gd(i))-c\gb_r \nonumber\\
&=s_{\gb_{r-1}}\cdots s_{\gb_1}(-\ga'+\gd(i))
-c \big( \sum_{j=s}^{t-1}\ga_j + 2\sum_{j=t}^n\ga_j \big).
\end{align}
Observe that, by (\ref{Eqn:Form}), 
$s_{\gb_{r-1}}\cdots s_{\gb_1}(-\ga'+\gd(i))$ is of the form
\begin{equation}\label{Eqn:Form7}
s_{\gb_{r-1}}\cdots s_{\gb_1}(-\ga'+\gd(i))
=(-\sum_{\ga \in \Pi}m_\ga \ga ) - \ga' + \gd(i)
\end{equation}
for some $m_\ga \in \Z_{\geq 0}$.
Moreover, as $s_{\gb_m}\cdots s_{\gb_1}(-\ga'+\gd(i))$ is a weight linked from
$s_{\gb_{r}}\cdots s_{\gb_1}(-\ga'+\gd(i))$, 
the weight $s_{\gb_m}\cdots s_{\gb_1}(-\ga'+\gd(i))$ is of the form
\begin{equation}\label{Eqn:Form8}
s_{\gb_m}\cdots s_{\gb_1}(-\ga'+\gd(i))
=(-\sum_{\ga \in \Pi}m_\ga' \ga )+s_{\gb_{r}}\cdots s_{\gb_1}(-\ga'+\gd(i))
\end{equation}
for some $m_\ga' \in \Z_{\geq 0}$.
By combining (\ref{Eqn:Form6}), (\ref{Eqn:Form7}), 
and (\ref{Eqn:Form8}), we have
\begin{align}\label{Eqn:Form9}
&s_{\gb_m}\cdots s_{\gb_1}(-\ga'+\gd(i))\nonumber\\
&=(-\sum_{\ga \in \Pi}m_\ga' \ga )+s_{\gb_{r}}\cdots s_{\gb_1}(-\ga'+\gd(i))\nonumber\\
&=(-\sum_{\ga \in \Pi}m_\ga' \ga ) + s_{\gb_{r-1}}\cdots s_{\gb_1}(-\ga'+\gd(i))
-c \big( \sum_{j=s}^{t-1}\ga_j + 2\sum_{j=t}^n\ga_j \big)\nonumber\\
&=(-\sum_{\ga \in \Pi}m_\ga' \ga ) + (-\sum_{\ga \in \Pi}m_\ga \ga )
-c \big( \sum_{j=s}^{t-1}\ga_j + 2\sum_{j=t}^n\ga_j \big)- \ga' + \gd(i)
\end{align}
with $m_\ga, m_\ga' \in \Z_{\geq 0}$.
By (\ref{Eqn:Assmp}), we have
\begin{equation*}
c = \IP{s_{\gb_{r-1}}\cdots s_{\gb_1}(-\ga'+\gd(i))}{\cgb_r} \in 1+\Z_{\geq 0}.
\end{equation*}
Therefore,
by (\ref{Eqn:Form9}), the weight $s_{\gb_m}\cdots s_{\gb_1}(-\ga'+\gd(i))$ 
is of the form
\begin{equation*}
s_{\gb_m}\cdots s_{\gb_1}(-\ga'+\gd(i))
=-\sum_{\ga \in \Pi}n_\ga \ga 
-\sum_{j=s}^{t-1}\ga_j - 2\sum_{j=t}^{n}\ga_j
-\ga'+\gd(i)
\end{equation*}
for some $n_\ga \in \Z_{\geq 0}$.
By (\ref{Eqn:Form2}), this implies that
\begin{equation*}
2\sum_{j=i}^n\ga_j
=\sum_{\ga \in \Pi}n_\ga \ga 
+\sum_{j=s}^{t-1}\ga_j + 2\sum_{j=t}^{n}\ga_j
+\ga'.
\end{equation*}
Since $s<t \leq i$, we then have
\begin{align}\label{Eqn:Form5}
0= &\sum_{\ga \in \Pi}n_\ga \ga 
+\sum_{j=s}^{t-1}\ga_j + 2\sum_{j=t}^{n}\ga_j
+\ga' - 2\sum_{j=i}^n\ga_j \nonumber\\
&=
\begin{cases}
\sum_{\ga \in \Pi}n_\ga \ga 
+\sum_{j=s}^{t-1}\ga_j + 2\sum_{j=t}^{i-1}\ga_j
+\ga'  & \text{ if $t < i$ }\\
\sum_{\ga \in \Pi}n_\ga \ga 
+\sum_{j=s}^{t-1}\ga_j 
+\ga' & \text{ if $t = i$. }
\end{cases}
\end{align}
This is a contradiction, because, as $n_\ga \in \Z_{\geq 0}$,
(\ref{Eqn:Form5}) cannot be zero.
Therefore no $\gb_j$ in $(\gb_1, \ldots, \gb_m)$ 
is a root in $\gD(\fz(\fn))$. 

Next we suppose that there exists 
$\gb_r$ in $(\gb_1, \ldots, \gb_m)$ with $\gb_r \in \gD(\fg(1))$. 
There are long roots and short roots in $\gD(\fg(1))$.
We handle these cases separately. 
We first suppose that $\gb_r$ is a long root in $\gD(\fg(1))$.
The long roots in $\gD(\fg(1))$ are 
$\vep_j \pm \vep_k$ for $1\leq j\leq i$ and $i+1 \leq k \leq n$.
(See Appendix \ref{chap:Data}.) The roots $\vep_j \pm \vep_k$
may be expressed in terms of simple roots as 
\begin{equation*}
\vep_j + \vep_k
=\sum_{l=j}^n \ga_l + \sum_{l=k}^n\ga_l
=\sum_{l=j}^{i-1} \ga_l + \ga_i +  \sum_{l=i+1}^{k-1}\ga_l
+2\sum_{l=k}^{n-1}\ga_l + 2\ga_n
\end{equation*}
and
\begin{equation*}
\vep_j - \vep_k
=\sum_{l=j}^n \ga_l - \sum_{l=k}^n\ga_l
=\sum_{l=j}^{i-1} \ga_l + \ga_i +  \sum_{l=i+1}^{k-1}\ga_l.
\end{equation*}
We show that if $\gb_r = \vep_j\pm\vep_k$ then 
$\IP{s_{\gb_{r-1}}\cdots s_{\gb_1}(-\ga'+\gd(i))}{\cgb_r} \notin \Z$.
Observe that since $\ga_n$ is the only short simple root, the coroot
$(\vep_j+\vep_k)^\vee$ can be expressed as
\begin{align*}
&(\vep_j+\vep_k)^\vee\\
&= \big( \sum_{l=j}^{i-1} \ga_l + \ga_i +  \sum_{l=i+1}^{k-1}\ga_l
+2\sum_{l=k}^{n-1}\ga_l + 2\ga_n \big)^\vee\\
&=\sum_{l=j}^{i-1} \frac{2\ga_l}{||\vep_j+\vep_k||^2}
+ \frac{2\ga_i}{||\vep_j+\vep_k||^2}
+ \sum_{l=i+1}^{k-1} \frac{2\ga_l}{||\vep_j+\vep_k||^2}
+2\sum_{l=k}^{n-1}\frac{2\ga_l}{||\vep_j+\vep_k||^2}
+ 2 \cdot \frac{2\ga_n}{||\vep_j+\vep_k||^2}\\
&=\sum_{l=j}^{i-1} \cga_l + \cga_i
+ \sum_{l=i+1}^{k-1} \cga_l
+2\sum_{l=k}^{n-1}\cga_l
+ \cga_n.
\end{align*}
Similarly, we have
\begin{equation*}
(\vep_j-\vep_k)^\vee
=\sum_{l=j}^{i-1} \cga_l + \cga_i
+ \sum_{l=i+1}^{k-1} \cga_l.
\end{equation*}
Now observe that, as $\gl_i$ is the fundamental weight for $\ga_i$, for $\ga \in \Pi$, 
we have
\begin{align}\label{Eqn:Link}
\IP{\gd(i)}{\cga}
&=\IP{-(n-i-(1/2))\gl_i+\rho}{\cga}\nonumber\\
&=
\begin{cases}
-n+i+(3/2)  & \text{if $\ga=\ga_i$}\\
1 & \text{otherwise.}
\end{cases}
\end{align}
Thus,
\begin{align*}
&\IP{\gd(i)}{(\vep_j+\vep_k)^\vee}\\
&=\IP{\gd(i)}{\sum_{l=j}^{i-1} \cga_l + \cga_i
+ \sum_{l=i+1}^{k-1} \cga_l
+2\sum_{l=k}^{n-1}\cga_l
+ \cga_n}\\
&=\sum_{l=j}^{i-1} \IP{\gd(i)}{\cga_l} + 
\IP{\gd(i)}{\cga_i}
+ \sum_{l=i+1}^{k-1} \IP{\gd(i)}{\cga_l}
+2\sum_{l=k}^{n-1} \IP{\gd(i)}{\cga_l}
+ \IP{\gd(i)}{\cga_n}\\
&=(i-1 - (j-1)) + (-n+i+(3/2)) +(k-1 -i)
+2(n-1 - (k-1)) + 1\\
&=n-k+i-j+(3/2).
\end{align*}
Similarly,
\begin{equation*}
\IP{\gd(i)}{(\vep_j-\vep_k)^\vee}
=-n+k+i-j+(1/2).
\end{equation*}
Hence, for $\gb_r  = \vep_j \pm \vep_k$, we have
$\IP{\gd(i)}{\cgb_r} \notin \Z$. Now, by (\ref{Eqn:Form7}), we have 
\begin{align*}
\IP{s_{\gb_{r-1}}\cdots s_{\gb_1}(-\ga'+\gd(i))}{\cgb_r}
&=\IP{(-\sum_{\ga \in \Pi}m_\ga \ga ) - \ga' + \gd(i)}{\cgb_r}\\
&=-\sum_{\ga \in \Pi}m_\ga \IP{\ga}{\cgb_r} - \IP{\ga'}{\cgb_r}
+\IP{\gd(i)}{\cgb_r}
\end{align*}
with $m_\ga \in \Z$. Since $m_\ga, \IP{\ga}{\cgb_r}, \IP{\ga'}{\cgb_r} \in \Z$
and $\IP{\gd(i)}{\cgb_r} \notin \Z$, this shows that 
$\IP{s_{\gb_{r-1}}\cdots s_{\gb_1}(-\ga'+\gd(i))}{\cgb_r}\notin \Z$.

Finally, we suppose that $\gb_r$ is a short root in $\gD(\fg(1))$.
The short roots in $\gD(\fg(1))$
are $\vep_j$ for $1\leq j \leq i$ (see Appendix \ref{chap:Data}). 
Thus $\gb_r$ is $\gb_r = \vep_l$ for some $1\leq l \leq i$.
Since $\vep_l$ is of the form 
$\vep_l = \sum_{j=l}^n\ga_j$,
(\ref{Eqn:Form2}) forces that $l = i$; otherwise,
$s_{\gb_m}\cdots s_{\gb_1}(-\ga'+\gd(i))$ would have a contribution from 
some $\ga_j \in \Pi$ with $1\leq j \leq i-1$.
Thus $\gb_r = \vep_i = \sum_{j=i}^n\ga_j$.
Since $\gb_r$ is a short root,
the coroot $\cgb_r = (\sum_{j=i}^n\ga_j)^\vee$ 
can be expressed as
\begin{equation*}
\cgb_r 
= \big( \sum_{j=i}^n\ga_j \big)^\vee
= \sum_{j=i}^n\frac{2\ga_j}{||\gb_r||^2}
=\frac{2\ga_i}{||\gb_r||^2}
+\sum_{j=i+1}^{n-1}\frac{2\ga_j}{||\gb_r||^2}
+\frac{2\ga_n}{||\gb_r||^2}
=2\cga_i + 2\sum_{j=i+1}^{n-1}\cga_j + \cga_n.
\end{equation*}
It then follows from (\ref{Eqn:Link}) that 
\begin{align*}
\IP{\gd(i)}{\cgb_r}
&=\IP{-(n-i-(1/2))\gl_i+\rho}{\big( \sum_{j=i}^n\ga_j \big)^\vee}\nonumber\\
&=\IP{-(n-i-(1/2))\gl_i+\rho}{2\cga_i + 2\sum_{j=i+1}^{n-1}\cga_j + \cga_n} \nonumber\\
&=2\IP{-(n-i-(1/2))\gl_i+\rho}{\cga_i}
+2\sum_{j=i+1}^{n-1}\IP{-(n-i-(1/2))\gl_i+\rho}{\cga_j} \nonumber \\
&\quad +\IP{-(n-i-(1/2))\gl_i+\rho}{\cga_n}\\
&=2(-n+i+(3/2))+2(n-1-i)+1\\
&=2.
\end{align*}
Thus, by (\ref{Eqn:Form7}), we have
\begin{align}\label{Eqn:Form14}
\IP{s_{\gb_{r-1}} \cdots s_{\gb_1}(-\ga'+\gd(i))}{\cgb_r}
&=\IP{(-\sum_{\ga \in \Pi}m_\ga \ga ) - \ga' + \gd(i)}{\cgb_r} \nonumber\\
&=\IP{-\sum_{\ga \in \Pi}m_\ga \ga - \ga'}{\cgb_r} + 2
\end{align}
with $m_\ga \in \Z_{\geq 0}$.
Thus, as $\gb_r = \sum_{j=i}^n\ga_j$, 
if $d = \IP{-\sum_{\ga \in \Pi}m_\ga \ga - \ga'}{\cgb_r} + 2$
then $s_{\gb_{r}} \cdots s_{\gb_1}(-\ga'+\gd(i))$ is of the form
\begin{equation*}
s_{\gb_{r}} \cdots s_{\gb_1}(-\ga'+\gd(i))
=s_{\gb_{r-1}} \cdots s_{\gb_1}(-\ga'+\gd(i))-d\sum_{j=i}^n\ga_j.
\end{equation*}
By (\ref{Eqn:Form7}) and (\ref{Eqn:Form8}), we have
\begin{align*}
s_{\gb_m}\cdots s_{\gb_1}(-\ga'+\gd(i))
&=(-\sum_{\ga \in \Pi}m_\ga' \ga )+s_{\gb_{r}}\cdots s_{\gb_1}(-\ga'+\gd(i))\nonumber \\
&=(-\sum_{\ga \in \Pi}m_\ga' \ga )+ s_{\gb_{r-1}} \cdots s_{\gb_1}(-\ga'+\gd(i))
-d\sum_{j=i}^n\ga_j\nonumber \\
&=(-\sum_{\ga \in \Pi}m_\ga' \ga )+(-\sum_{\ga \in \Pi}m_\ga \ga ) -d\sum_{j=i}^n\ga_j- \ga' + \gd(i)
\end{align*}
with $m_\ga, m_\ga' \in \Z_{\geq}$. Therefore,
$s_{\gb_m}\cdots s_{\gb_1}(-\ga'+\gd(i))$ can be expressed as
\begin{equation*}
s_{\gb_m}\cdots s_{\gb_1}(-\ga'+\gd(i))
=-\sum_{\ga \in \Pi}n_\ga \ga -d\sum_{j=i}^n\ga_j- \ga' + \gd(i)
\end{equation*}
for some $n_\ga \in \Z_{\geq 0}$.
By (\ref{Eqn:Form2}), this implies that
\begin{equation}\label{Eqn:Form10}
2\sum_{j=i}^n\ga_j
=\sum_{\ga \in \Pi}n_\ga \ga + d\sum_{j=i}^n\ga_j+ \ga'.
\end{equation}
By comparing the coefficients of $\ga_i$ in the both sides, we have 
\begin{equation}\label{Eqn:Form11}
n_{\ga_i}+d = 2.
\end{equation}
By (\ref{Eqn:Assmp}) and (\ref{Eqn:Form14}), we have 
$d = \IP{-\sum_{\ga \in \Pi}m_\ga \ga - \ga'}{\cgb_r} + 2 \in 1+\Z_{\geq 0}$.
Since $n_{\ga_i} \in \Z_{\geq 0}$, (\ref{Eqn:Form11}) forces that
\begin{equation*}
d = 2 \text{ or } d=1.
\end{equation*}
If $d=2$ then (\ref{Eqn:Form10}) becomes 
\begin{equation*}
2\sum_{j=i}^n\ga_j
=\sum_{\ga \in \Pi}n_\ga \ga + 2\sum_{j=i}^n\ga_j+ \ga'.
\end{equation*}
Therefore,
\begin{equation}\label{Eqn:Form4}
\sum_{\ga \in \Pi}n_\ga \ga +\ga' = 0,
\end{equation}
which is a contradiction, because as $\ga' \in \Pi$ and $k'_\ga \in \Z_{\geq 0}$, 
the left hand side of (\ref{Eqn:Form4}) cannot be zero.
If $d=1$ then, since $d = \IP{-\sum_{\ga \in \Pi}m_\ga \ga - \ga'}{\cgb_r} + 2$, 
we have
\begin{equation*}
\IP{-\sum_{\ga \in \Pi}m_\ga \ga - \ga'}{\cgb_r} + 2 = 1.
\end{equation*}
Thus,
\begin{equation}\label{Eqn:Form12}
\IP{\sum_{\ga \in \Pi}m_\ga \ga + \ga'}{\cgb_r} = 1.
\end{equation}
Observe that, as $\gb_r = \vep_i$ in the standard realization, 
if $\IP{\ga}{\cgb_r} \neq 0$ for $\ga \in \Pi$ then $\ga$ must be
$\ga = \vep_{i-1}-\vep_{i}$ in $\Pi(\fl)$ or $\ga = \vep_i-\vep_{i+1}$ 
in $\Pi\backslash \Pi(\fl)$. 
Since $\IP{\vep_{i-1}-\vep_i}{\vep_i^\vee} = -2$,
$\IP{\vep_{i}-\vep_{i+1}}{\vep_i^\vee}=2$, and $\ga' \in \Pi(\fl)$,
the left hand side of (\ref{Eqn:Form12}) is 
\begin{align*}
\IP{\sum_{\ga \in \Pi}m_\ga \ga + \ga'}{\cgb_r}
&= m_{\vep_{i-1}-\vep_i}\IP{\vep_{i-1}-\vep_i}{\vep_i^\vee}
+m_{\vep_{i}-\vep_{i+1}}\IP{\vep_{i}-\vep_{i+1}}{\vep_i^\vee}
+\IP{\ga'}{\vep_i^\vee}\\
&=-2m_{\vep_{i-1}-\vep_i} + 2m_{\vep_{i}-\vep_{i+1}}
-2\gd_{\ga', \vep_{i-1}-\vep_{i}}\\
&=2(m_{\vep_{i}-\vep_{i+1}}-m_{\vep_{i-1}-\vep_i}- \gd_{\ga', \vep_{i-1}-\vep_{i}}),
\end{align*}
where $\gd_{\ga', \vep_{i-1}-\vep_{i}}$ is the Kronecker delta.
As $m_{\vep_{i}-\vep_{i+1}}$, $m_{\vep_{i-1}-\vep_i}$, 
and $\gd_{\ga', \vep_{i-1}-\vep_{i}}$ are integers, this shows that 
$\IP{\sum_{\ga \in \Pi}m_\ga \ga + \ga'}{\cgb_r} \neq 1$, which 
contradicts (\ref{Eqn:Form12}).
Therefore, no $\gb_r$ in $(\gb_1, \ldots, \gb_m)$ is a short root 
in $\gD(\fg(1))$.
Hence there is no link from
$-\ga'+\gd(i)$ to $-2\sum_{j=i}^n\ga_j+\gd(i)$.
\end{proof}

\vsp

Now we are going to show that 
the map 
\begin{equation*}
\varphi_{\Omega_2}: 
M_\fq( -2\vep_i-(n-i-(1/2))\gl_i+\rho) 
\to M_\fq (-(n-i-(1/2))\gl_i+\rho)
\end{equation*}
is standard. 
This is to show that,
given highest weight vector $v_h$ for $F(\Omega_2|_{V(\mu+\geg)^*})$,
the image $\varphi_{\Omega_2}(1\otimes v_h)$ of $1\otimes v_h$
is a non-zero scalar multiple of $\varphi_{std}(1\otimes v_h)$, where
$F(\Omega_2|_{V(\mu+\geg)^*})$ is
the finite dimensional simple $\fl$-submodule of 
$M_\fq(-(n-i-(1/2))\gl_i+\rho)^\fn$ 
induced by the $\Omega_2|_{V(\mu+\geg)^*}$ system,
so that
$M_\fq( -2\vep_i-(n-i-(1/2))\gl_i+\rho) 
= \Cal{U}(\fg)\otimes_{\Cal{U}(\fq)} F(\Omega_2|_{V(\mu+\geg)^*})$.

Observe that, by the definition of $\varphi_{\Omega_k}$, we have
$\varphi_{\Omega_2}(1\otimes v_h) = 1 \cdot v_h = v_h$.
On the other hand, if $1\otimes v^+$ 
is a highest weight vector for $M(-2\vep_i-(n-i-(1/2))\gl_i+\rho)$
with highest weight $-2\vep_i-(n-i-(1/2))\gl_i$
and if
$\text{pr}: M(-(n-i-(1/2))\gl_i+\rho) \to M_\fq(-(n-i-(1/2))\gl_i+\rho)$
is the canonical projection map then $\varphi_{std}(1\otimes v_h) = 
(\text{pr}\circ \varphi)(1\otimes v^+)$, where $\varphi$ is an embedding
of $M(-2\vep_i-(n-i-(1/2))\gl_i+\rho)$ into $M(-(n-i-(1/2))\gl_i+\rho)$; 
in a diagram we have
\begin{equation*}
\xymatrix{ 
M(-2\vep_i-(n-i-(1/2))\gl_i+\rho) \ar[r]^\varphi 
\ar[d]_{\text{pr}'} &
M(-(n-i-(1/2))\gl_i+\rho) 
\ar[d]^{\text{pr}}\\
M_\fq( -2\vep_i-(n-i-(1/2))\gl_i+\rho) \ar[r]^{\varphi_{std}}&
M_\fq(-(n-i-(1/2))\gl_i+\rho),
}
\end{equation*}
where $\text{pr}': M(-2\vep_i-(n-i-(1/2))\gl_i+\rho) \to M_\fq( -2\vep_i-(n-i-(1/2))\gl_i+\rho)$
is the canonical projection map.
Note that, by Proposition \ref{Prop:Prop1}, we have
$(\text{pr}\circ \varphi)(1\otimes v^+) = \varphi_{std}(1\otimes v_h) \neq 0$.
Therefore, to show that $\varphi_{\Omega_2}$ is standard,
we wish to show that $v_h = \varphi_{\Omega_2}(1\otimes v_h)$ 
is a non-zero scalar multiple of $(\text{pr}\circ \varphi)(1\otimes v^+)$. 
Since $M_\fq(-(n-i-(1/2))\gl_i+\rho) \cong 
\Cal{U}(\bar\fn) \otimes \C_{-(n-i-(1/2))\gl_i}$ as an $\fl$-module,
we have
\begin{equation}\label{Eqn:Form16}
v_h=u_h \otimes 1_{-(n-i-(1/2))\gl_i}
\end{equation}
and 
\begin{equation}\label{Eqn:Form18}
(\text{pr}\circ \varphi)(1\otimes v^+)=\tu \otimes 1_{-(n-i-(1/2))\gl_i}
\end{equation}
for some $u_h, \tilde{u} \in \Cal{U}(\bar \fn)\backslash \{0\}$. 
Hence, to show that $v_h$ is a non-zero 
scalar multiple of $(\text{pr}\circ \varphi)(1\otimes v^+)$,
it suffices to show that $u_h$ in (\ref{Eqn:Form16}) is 
a non-zero scalar multiple of $\tu$ in (\ref{Eqn:Form18}).

Observe that, as $v_h=u_h \otimes 1_{-(n-i-(1/2))\gl_i}$ 
is a highest weight vector for the simple 
$\fl$-submodule $F(\Omega_2|_{V(\mu+\geg)^*})$ of
$\Cal{U}(\bar\fn) \otimes \C_{-(n-i-(1/2))\gl_i+\rho}$, for all $\ga \in \Pi(\fl)$, 
we have $X_\ga \cdot (u_h \otimes 1_{-(n-i-(1/2))\gl_i}) = 0$.
Therefore $\ad(X_\ga)(u_h) = 0$ for all $\ga \in \Pi(\fl)$.
On the other hand, it follows from (\ref{Eqn:Ltype1}) that 
$F(\Omega_2|_{V(\mu+\geg)^*})$ is spanned by the elements
of the form $u \otimes 1_{-(n-i-(1/2))\gl_i}$ with $u \in \gs(\Sym^2(\bar\fn))$.
Since 
$F(\Omega_2|_{V(\mu+\geg)^*})$ has highest weight 
$-2\vep_i-(n-i-(1/2))\gl_i$, this shows that
$u_h$ is an element in $\gs(\Sym^2(\bar\fn))$ with weight $-2\vep_i$.

\begin{Def}\label{Def:Condition}
For $u \in \Cal{U}(\bar\fn)$, we say that $u$ satisfies
\emph{Condition (H)}
if $u$ satisfies following three conditions:
\begin{enumerate}
\item[(1)] $u \in \gs(\Sym^2(\bar \fn))$, 
\item[(2)] $u$ has weight $-2\vep_i$, and
\item[(3)] $\ad(X_\ga)(u) = 0$ for all $\ga \in \Pi(\fl)$.
\end{enumerate}
\end{Def}

It follows from the observation made before Definition \ref{Def:Condition} that
$u_h \in \Cal{U}(\bar \fn)$ in (\ref{Eqn:Form16}) satisfies Condition (H).
Our first goal is to show that any element in $\Cal{U}(\bar \fn)$ that 
satisfies Condition (H) is a scalar multiples of $u_h$.

\begin{Lem}\label{Lem:Lem3}
For any $\gb \in \gD^+(\fl) \cup \gD(\fz(\fn))$, we have 
$2\vep_i - \gb \notin \gD^+$.
\end{Lem}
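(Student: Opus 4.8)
The plan is to verify $2\vep_i - \gb \notin \gD^+$ by examining each possible type for $\gb$ separately, using the explicit description of the roots in the standard realization for $B_n(i)$. We work with the $\fl$-grading $\gD^+ = \gD^+(\fl) \cup \gD(\fg(1)) \cup \gD(\fz(\fn))$, where $\ga_i$ has multiplicity $0$, $1$, $2$ respectively (see Appendix \ref{chap:Data}); this multiplicity bookkeeping is the main conceptual tool. Note first that $2\vep_i$ has $\ga_i$-multiplicity $2$, and for $\gb \in \gD^+(\fl)$ the difference $2\vep_i - \gb$ still has $\ga_i$-multiplicity $2$, while for $\gb \in \gD(\fz(\fn))$ it has $\ga_i$-multiplicity $0$. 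In both cases, if $2\vep_i - \gb$ were a positive root, it would have to lie in $\gD(\fz(\fn))$ or $\gD^+(\fl)$ accordingly; but every element of $\gD(\fz(\fn))$ is of the form $\vep_s + \vep_t$ with $s < t \le i$ and hence has $\vep_i$-coefficient $0$ or $1$, never $2$, which rules out $\gb \in \gD^+(\fl)$. Similarly no element of $\gD^+(\fl) = \{\vep_j - \vep_k\} \cup \{\vep_j + \vep_k : i+1 \le j < k\} \cup \{\vep_j : j > i\}$ has $\vep_i$-coefficient $2$, which rules out $\gb \in \gD(\fz(\fn))$.

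The remaining case is $\gb \in \gD(\fg(1))$, where $2\vep_i - \gb$ has $\ga_i$-multiplicity $1$, so it would have to be a root in $\gD(\fg(1))$. The elements of $\gD(\fg(1))$ in the standard realization are the long roots $\vep_j \pm \vep_k$ with $1 \le j \le i < k \le n$ and the short roots $\vep_l$ with $1 \le l \le i$ (see Appendix \ref{chap:Data}). I would first check that $2\vep_i - \vep_l \notin \gD$ for any short root $\vep_l$: if $l = i$ this is $\vep_i$, which is a root, so here I must instead observe that $2\vep_i - \vep_i = \vep_i$ does lie in $\gD(\fg(1))$ --- wait, this needs care. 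Let me reconsider: $2\vep_i - \vep_i = \vep_i \in \gD(\fg(1))$, so naively the lemma would fail. The resolution is that we also need $\vep_i \in \gD(\fg(1))$, which it is, so $2\vep_i = \vep_i + \vep_i$ — but $\vep_i$ is not in $\gD^+(\fl)$. Hmm, so actually I should double-check the statement's intent: the claim $2\vep_i - \gb \notin \gD^+$ is only asserted for $\gb \in \gD^+(\fl) \cup \gD(\fz(\fn))$, \emph{not} for $\gb \in \gD(\fg(1))$. So the short-root and long-root subcases of $\gD(\fg(1))$ are simply \emph{not part of the claim}, and only the two cases handled in the previous paragraph are needed.

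Thus the proof reduces entirely to the multiplicity argument: for $\gb \in \gD^+(\fl)$, the would-be root $2\vep_i - \gb$ has $\ga_i$-multiplicity $2$ hence must lie in $\gD(\fz(\fn))$, but no element of $\gD(\fz(\fn)) = \{\vep_s + \vep_t : 1 \le s < t \le i\}$ can be written with $\vep_i$-coefficient $2$ (each has $\vep_i$-coefficient at most $1$); for $\gb \in \gD(\fz(\fn))$, the would-be root $2\vep_i - \gb$ has $\ga_i$-multiplicity $0$ hence must lie in $\gD^+(\fl)$, but inspecting the three families comprising $\gD^+(\fl)$ shows none has $\vep_i$-coefficient $2$, while $2\vep_i - \gb$ does have $\vep_i$-coefficient $2 - (\text{coeff of }\vep_i\text{ in }\gb) \ge 2 - 1 = 1$, and in fact equals $2$ unless $\gb$ involves $\vep_i$; a short direct check of the finitely many $\gb = \vep_s + \vep_i$ (so $2\vep_i - \gb = \vep_i - \vep_s$ with $s < i$) shows these \emph{are} in $\gD^+(\fl)$ --- so this subcase does need separate attention. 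The main obstacle is precisely this last subcase: when $\gb = \vep_s + \vep_i \in \gD(\fz(\fn))$ with $s < i$, we get $2\vep_i - \gb = \vep_i - \vep_s$, and $-(\vep_i - \vep_s) = \vep_s - \vep_i \in \gD^+(\fl)$, so $\vep_i - \vep_s \in \gD^- $, hence $\vep_i - \vep_s \notin \gD^+$ — consistent with the claim. So one must carefully observe it lands in $\gD^-$, not $\gD^+$; I would organize the proof around this sign check, which is the only genuinely non-formal point.
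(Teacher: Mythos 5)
Your proposal is correct and takes the same route the paper intends: the paper's proof is literally ``this follows from a direct observation'' of the root data in Appendix A, and you have simply carried out that observation, correctly isolating the one subtle subcase $\gb = \vep_s + \vep_i \in \gD(\fz(\fn))$, where $2\vep_i - \gb = \vep_i - \vep_s$ is a root but a negative one. (Minor quibble: for $\gb = \vep_j - \vep_i \in \gD^+(\fl_\gamma)$ the $\vep_i$-coefficient of $2\vep_i-\gb$ is $3$, not $2$, but your conclusion that it cannot match any element of $\gD(\fz(\fn))$ still holds.)
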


\begin{proof}
This lemma follows from a direct observation.
(See Appendix \ref{chap:Data} for 
$\gD^+(\fl)=\gD^+(\flg)\cup \gD^+(\flng)$ and $\gD(\fz(\fn))$.)
\end{proof}

We write $\fu = \bigoplus_{\ga \in \gD^+}\fg_{\ga}$ for the nilradical 
of $\fb = \fh \oplus \fu$ and
we denote by $\bar \fu$ the opposite nilradical of $\fu$.
Note that, as $\fn$ is the nilradical of the parabolic subalgebra 
$\fq=\fl\oplus\fn$, we have $\fn \subset \fu$.

\begin{Lem}\label{Lem:Lem}
If $u$ is in $\Sym^2(\bar \fu)$ with weight $-2\vep_i$ then 
$u$ is of the form
\begin{equation*}
AX_{-\vep_i}^2 
+ \sum_{k=i+1}^nB_k X_{-(\vep_i+\vep_k)}X_{-(\vep_i-\vep_k)}
\end{equation*}
for some constants $A$ and $B_k$. In particular, we have $u \in \Sym^2(\bar \fn)$.
\end{Lem}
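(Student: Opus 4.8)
The plan is to analyze the weight space $(\Sym^2(\bar{\fu}))_{-2\vep_i}$ directly and show it is spanned by the listed elements. First I would observe that an element of $\Sym^2(\bar{\fu})$ is a linear combination of products $X_{-\gamma}X_{-\gamma'}$ with $\gamma,\gamma'\in\gD^+$, and that such a product has weight $-2\vep_i$ precisely when $\gamma+\gamma'=2\vep_i$. So the task reduces to enumerating all unordered pairs $\{\gamma,\gamma'\}$ of positive roots with $\gamma+\gamma'=2\vep_i$. Using the standard realization of the $B_n$ root system (recalled in Appendix \ref{chap:Data}), the positive roots are $\vep_j\pm\vep_k$ ($j<k$) and $\vep_j$. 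Writing $\gamma+\gamma'=2\vep_i$ and comparing coefficients in the $\vep$-basis, the only possibilities are $\gamma=\gamma'=\vep_i$, giving $X_{-\vep_i}^2$, and $\gamma=\vep_i+\vep_k$, $\gamma'=\vep_i-\vep_k$ for some $k$ with $k\neq i$; since both must be positive roots, we need $i<k\leq n$. (One must check there is no contribution of the form $(\vep_i+\vep_k)+(\vep_i-\vep_k)$ with $k<i$ since $\vep_i-\vep_k$ would then be negative, and no mixed short/long combination works.) This yields exactly the stated form $AX_{-\vep_i}^2+\sum_{k=i+1}^n B_k X_{-(\vep_i+\vep_k)}X_{-(\vep_i-\vep_k)}$.

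For the final assertion $u\in\Sym^2(\bar\fn)$, I would note that $\fn=\fg(1)\oplus\fz(\fn)$, so $\bar\fn=\fg(-1)\oplus\fz(\bar\fn)$, and it suffices to check that each root vector appearing above lies in $\bar\fn$, i.e. that $\vep_i$, $\vep_i+\vep_k$ and $\vep_i-\vep_k$ (for $i<k\leq n$) all lie in $\gD(\fg(1))\cup\gD(\fz(\fn))$. This is immediate from the description of $\gD(\fg(1))$ and $\gD(\fz(\fn))$ in Appendix \ref{chap:Data} for type $B_n(i)$: $\vep_i$ is a short root in $\gD(\fg(1))$, and $\vep_i\pm\vep_k$ with $i<k\leq n$ are long roots in $\gD(\fg(1))$; none of these roots lies in $\gD^+(\fl)$. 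Hence every term of $u$ is a product of two elements of $\bar\fn$, so $u\in\Sym^2(\bar\fn)$.

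The only genuinely non-routine point is the root-enumeration step: one must be careful to argue that no pair of positive roots other than those listed sums to $2\vep_i$, including ruling out pairs involving short roots $\vep_j$ with $j\neq i$ (which would force a nonzero $\vep_j$-coefficient on one side that cannot be cancelled) and pairs $\vep_i+\vep_k,\vep_i-\vep_k$ with $k\leq i$ (where $\vep_i-\vep_k$ fails to be positive, or is not a root when $k=i$). Once the enumeration is pinned down, both the explicit form of $u$ and the containment in $\Sym^2(\bar\fn)$ follow by inspection against the tables in Appendix \ref{chap:Data}, so I would keep the write-up short and refer to the appendix for the root data, exactly as Lemma \ref{Lem:Lem3} does.
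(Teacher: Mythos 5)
Your proposal is correct and follows essentially the same route as the paper: both reduce to enumerating the pairs of positive roots summing to $2\vep_i$ and then checking against the root data in Appendix \ref{chap:Data} that every root occurring lies in $\gD(\fg(1))$. The only cosmetic difference is that the paper delegates the exclusion of $\gD^+(\fl)\cup\gD(\fz(\fn))$ to Lemma \ref{Lem:Lem3} before enumerating within $\gD(\fg(1))$, whereas you carry out the full enumeration over $\gD^+$ directly.
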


\begin{proof}
If $u \in \gs(\Sym^2(\bar \fu))$ with weight $-2\vep_i$ then $u$ is of the from
\begin{equation*}
u=\sum c_\gb X_{-\gb} X_{-2\vep_i+\gb}
\end{equation*}
for some constants $c_\gb$, where the sum runs 
over the roots $\gb \in \gD^+=\gD^+(\fl)\cup\gD(\fg(1)) \cup \gD(\fz(\fn))$ 
so that $2\vep_i -\gb \in \gD^+$.
By Lemma \ref{Lem:Lem3},
the roots $\gb$ must be in $\gD(\fg(1))$. 
Thus if $\gD_{2\vep_i}(\fg(1)) 
= \{ \gb \in \gD(\fg(1)) \; | \; 2\vep_i - \gb \in \gD\}$
then 
\begin{equation*}
u = \sum_{\gb \in \gD_{2\vep_i}(\fg(1))}
c_\gb X_{-\gb} X_{-2\vep_i+\gb}.
\end{equation*}
By Appendix \ref{chap:Data}, we have
\begin{equation*}
\gD(\fg(1)) = \{\vep_j \pm \vep_k \; | \; 1\leq j \leq i \text{ and }  i+1 \leq k \leq n\}
 \cup \{ \vep_j \; | \; 1 \leq j \leq i \}.
\end{equation*}
Thus,
\begin{equation*}
\gD_{2\vep_i}(\fg(1)) 
= \{ \gb \in \gD(\fg(1)) \; | \; 2\vep_i - \gb \in \gD\}\\
=\{\vep_i \pm \vep_k \; | \; i+1 \leq k \leq n\} \cup \{ \vep_i \}.
\end{equation*}
Therefore $u$ is of the form
\begin{align*}
u &= \sum_{\gb \in \gD_{2\vep_i}(\fg(1))}
c_\gb X_{-\gb} X_{-2\vep_i+\gb}\\
&=c_{\vep_i}X^2_{-\vep_i} + 
\sum_{k=i+1}^n c_{\vep_i + \vep_k}X_{-(\vep_i+\vep_k)}X_{-(\vep_i-\vep_k)}
+\sum_{k=i+1}^n c_{\vep_i - \vep_k}X_{-(\vep_i-\vep_k)}X_{-(\vep_i+\vep_k)}\\
&=c_{\vep_i}X^2_{-\vep_i} + 
\sum_{k=i+1}^n (c_{\vep_i + \vep_k}+c_{\vep_i - \vep_k})
X_{-(\vep_i+\vep_k)}X_{-(\vep_i-\vep_k)}.
\end{align*}
If $A = c_{\vep_i}$ and $B_k = c_{\vep_i + \vep_k}+c_{\vep_i - \vep_k}$
then $u$ can be expressed as 
\begin{equation*}
u = AX_{-\vep_i}^2 
+ \sum_{k=i+1}^nB_k X_{-(\vep_i+\vep_k)}X_{-(\vep_i-\vep_k)}.
\qedhere
\end{equation*}
\end{proof}

\begin{Prop}\label{Prop:Prop2}
If $u \in \Cal{U}(\bar \fn)$ satisfies Condition (H) then $u$ 
is a scalar multiple of $u_h$.
\end{Prop}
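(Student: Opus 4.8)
The plan is to exploit the rigidity coming from Condition (H) together with the explicit description of weight-$(-2\vep_i)$ elements of $\gs(\Sym^2(\bar\fn))$ provided by Lemma \ref{Lem:Lem}. By that lemma, any $u\in\Cal{U}(\bar\fn)$ satisfying condition (1) and (2) of Condition (H) can be written uniquely as
\begin{equation*}
u = AX_{-\vep_i}^2 + \sum_{k=i+1}^n B_k\, X_{-(\vep_i+\vep_k)}X_{-(\vep_i-\vep_k)}
\end{equation*}
for constants $A, B_{i+1},\dots,B_n$. So the space of such $u$ is at most $(n-i+1)$-dimensional, and the task reduces to showing that condition (3), namely $\ad(X_\ga)(u)=0$ for all $\ga\in\Pi(\fl)$, cuts this space down to a line. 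Since the known element $u_h$ satisfies Condition (H) and is non-zero, it suffices to prove that the solution space of condition (3) on the span above is one-dimensional; then $u_h$ spans it and every such $u$ is a scalar multiple of $u_h$.

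First I would recall from Appendix \ref{chap:Data} that $\Pi(\fl)=\Pi\backslash\{\ga_i\}$ consists (in the standard realization for $B_n$) of $\vep_1-\vep_2,\dots,\vep_{i-1}-\vep_i$ together with $\vep_{i+1}-\vep_{i+2},\dots,\vep_{n-1}-\vep_n$ and the short root $\vep_n$. Applying $\ad(X_\ga)$ to the displayed form of $u$ and using $[X_\ga, X_{-\gb}] = N_{\ga,-\gb}X_{\ga-\gb}$ when $\ga-\gb$ is a root (and $[X_\ga, X_{-\ga}] = H_\ga$), I would compute the resulting weight-$(-2\vep_i+\ga)$ element and set it to zero. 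The simple roots $\vep_j-\vep_{j+1}$ with $j\le i-2$ annihilate $u$ automatically since $\vep_i\pm\vep_k$ and $\vep_i$ have no $\vep_j$-component for $j<i$; the only simple root in $\Pi(\fl)$ that interacts with the ``$A$'' term is $\vep_{i-1}-\vep_i$, and since $\vep_i - (\vep_{i-1}-\vep_i) = 2\vep_i - \vep_{i-1}$ is not a root, in fact $\ad(X_{\vep_{i-1}-\vep_i})$ kills $X_{-\vep_i}^2$ as well — so the $A$-term is annihilated by all of $\Pi(\fl)$ and $A$ is unconstrained by those relations. The real content comes from the roots $\vep_{k-1}-\vep_k$ for $i+1\le k\le n$ (and the short root $\vep_n$), which move $X_{-(\vep_i\pm\vep_k)}$ to $X_{-(\vep_i\pm\vep_{k-1})}$ and thereby relate consecutive coefficients; applying $\ad(X_{\vep_{k-1}-\vep_k})$ to $u$ and collecting the coefficient of each monomial $X_{-(\vep_i+\vep_{k-1})}X_{-(\vep_i-\vep_k)}$, $X_{-(\vep_i+\vep_k)}X_{-(\vep_i-\vep_{k-1})}$, etc., will produce linear relations forcing each $B_k$ to be a fixed (nonzero) multiple of $A$. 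Finally $\ad(X_{\vep_n})$ applied to the $k=n$ term: since $\vep_i - \vep_n + \vep_n = \vep_i$ and $\vep_i + \vep_n$ is a root, this relates $B_n$ to $A$ through the term $X_{-\vep_i}\cdot X_{-\vep_i}$, consistently fixing $B_n$ in terms of $A$ as well (and one should check the $\vep_{n-1}-\vep_n$ relation and the $\vep_n$ relation are compatible, which they must be since $u_h$ exists). Altogether, once $A$ is chosen the $B_k$ are determined, so the solution space is one-dimensional, hence equals $\C u_h$.

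The main obstacle I anticipate is purely bookkeeping: correctly tracking the structure constants $N_{\ga,\gb}$ in the $\f{sl}(2)$-normalized root vectors and making sure the symmetrization $\gs(\Sym^2(\bar\fn))$ is handled properly (i.e., treating $X_{-(\vep_i+\vep_k)}X_{-(\vep_i-\vep_k)}$ as a symmetric product, so that $\ad$ acts by the Leibniz rule on each factor). There is also a small subtlety that $\ad(X_\ga)$ applied to a product can produce terms that are no longer manifestly in $\gs(\Sym^2(\bar\fu))$ if a bracket lands in $\fh$ or in a non-$\bar\fu$ piece; but since $\vep_i-(\vep_{i-1}-\vep_i)=2\vep_i-\vep_{i-1}\notin\gD$ and the other relevant differences $\vep_i\pm\vep_k-(\vep_{k-1}-\vep_k)$ are again negative roots, this does not actually occur, and I would note that explicitly. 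Because the existence of $u_h$ is already known (from the construction of the $\Omega_2$ system), I do not need to exhibit the solution explicitly — I only need the rank count showing the relations (3) have a $1$-dimensional solution space within the $(n-i+1)$-dimensional space from Lemma \ref{Lem:Lem}. That reduces the proof to: (i) quote Lemma \ref{Lem:Lem} for the general form; (ii) apply $\ad(X_\ga)$ for each $\ga\in\Pi(\fl)$ and extract the coefficient relations; (iii) observe these relations are triangular in the $B_k$'s (linking $B_k$ to $B_{k-1}$ or to $A$) with nonzero coefficients, hence pin down all $B_k$ from $A$; (iv) conclude $u\in\C u_h$.
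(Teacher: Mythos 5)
Your proposal follows essentially the same route as the paper: invoke Lemma \ref{Lem:Lem} to reduce to the form $AX_{-\vep_i}^2+\sum_{k=i+1}^n B_kX_{-(\vep_i+\vep_k)}X_{-(\vep_i-\vep_k)}$, then impose $\ad(X_\ga)(u)=0$ for $\ga=\vep_j-\vep_{j+1}$ ($i+1\le j\le n-1$) and $\ga=\vep_n$ to obtain a triangular linear system pinning each $B_k$ to $A$, which is exactly how the paper produces $u=Au_0$. One small correction: your claim that the $A$-term $X_{-\vep_i}^2$ is annihilated by all of $\Pi(\fl)$ is false for the short root $\vep_n$, since $\vep_n-\vep_i$ is a root in type $B_n$; indeed $\ad(X_{\vep_n})(X_{-\vep_i}^2)\neq 0$ is precisely what couples $A$ to $B_n$ (both images land on the monomial $X_{-\vep_i}X_{-(\vep_i-\vep_n)}$ of weight $-2\vep_i+\vep_n$), and without it the system would force all $B_k=0$, contradicting the known form of $u_h$ — your later sentence about $\ad(X_{\vep_n})$ relating $B_n$ to $A$ is the correct version and is what should be kept.
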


\begin{proof}
As $u_h$ satisfies Condition (H), to prove this proposition, it 
suffices to show that any element $u \in \Cal{U}(\bar\fn)$ 
that satisfies Condition (H) is a scalar multiple of 
\begin{equation}\label{Eqn:u_0}
u_0 = X^2_{-\vep_i} + 
\sum_{j=i+1}^n b_j X_{-(\vep_i+\vep_j)}X_{-(\vep_i-\vep_j)},
\end{equation}
where
\begin{equation}\label{Eqn:Constants1}
b_j = (-1)^{n-j}b_n
\prod_{k=j}^{n-1} 
\frac{N_{\vep_k-\vep_{k+1}, -(\vep_i-\vep_{k+1})}}
{N_{\vep_k-\vep_{k+1}, -(\vep_i+\vep_k)}}
\end{equation}
for $j=i+1, \ldots, n-1$ and 
\begin{equation}\label{Eqn:Constants2}
b_n = -\frac{2N_{\vep_n, -\vep_i}}{N_{\vep_n, -(\vep_i+\vep_n)}}.
\end{equation}
Here, $N_{\ga, \gb}$ are the constants so that 
$[X_\ga, X_\gb] = N_{\ga, \gb}X_{\ga+\gb}$.

If $u \in \Cal{U}(\bar\fn)$ satisfies Condition (H) then 
$u \in \gs(\Sym^2(\bar \fn)) \subset \hat{\gs}(\Sym^2(\bar \fu))$ 
and has weight $-2\vep_i$, where
$\hat{\gs}: \Sym(\bar \fu) \to \Cal{U}(\bar \fu)$ is 
the symmetrization map for $\Sym(\bar \fu)$.
Thus it follows from Lemma \ref{Lem:Lem} that $u$ is of the from
\begin{equation}\label{Eqn:Form17}
u=AX_{-\vep_i}^2 
+ \sum_{k=i+1}^nB_k X_{-(\vep_i+\vep_k)}X_{-(\vep_i-\vep_k)}
\end{equation}
for some constants $A$ and $B_k$.
Now observe that, by the condition (3) in Definition \ref{Def:Condition},
we have $\ad(X_\ga)(u) = 0$ for all $\ga \in \Pi(\fl)$. 
Therefore, as $\vep_{j}-\vep_{j+1}$ and $\vep_n$ are in $\Pi(\fl)$ 
for $j =i+1, \ldots, n-1$, we have
\begin{equation*}
\ad(X_{\vep_{j}-\vep_{j+1}})(u) = 0 
\quad \text{and} \quad
\ad(X_{\vep_n})(u) = 0
\end{equation*}
for $j =i+1, \ldots, n-1$. By (\ref{Eqn:Form17}), 
this means that for $j =i+1, \ldots, n-1$,
\begin{equation*}
\ad(X_{\vep_{j}-\vep_{j+1}}) 
\big( AX_{-\vep_i}^2 
+ \sum_{k=i+1}^nB_k X_{-(\vep_i+\vep_k)}X_{-(\vep_i-\vep_k)}\big) = 0
\end{equation*}
and
\begin{equation*}
\ad(X_{\vep_n})
\big( AX_{-\vep_i}^2 
+ \sum_{k=i+1}^nB_k X_{-(\vep_i+\vep_k)}X_{-(\vep_i-\vep_k)}\big) = 0,
\end{equation*}
which are
\begin{equation*}
B_{j}\;\ad(X_{\vep_{j}-\vep_{j+1}})(X_{-(\vep_i+\vep_j)}X_{-(\vep_i-\vep_j)})
+B_{j+1}\;\ad(X_{\vep_{j}-\vep_{j+1}})
(X_{-(\vep_i+\vep_{j+1})}X_{-(\vep_i-\vep_{j+1})}) = 0
\end{equation*}
and
\begin{equation*}
A\; \ad(X_{\vep_n})(X_{-\vep_i}^2) 
+ B_n\; \ad(X_{\vep_n})(X_{-(\vep_i+\vep_n)}X_{-(\vep_i-\vep_n)}) = 0,
\end{equation*}
respectively.
By solving the system of linear equations, we obtain
$B_j = b_j A$ for $j=i+1, \ldots, n$ with $b_j$ in 
(\ref{Eqn:Constants1}) and (\ref{Eqn:Constants2}).
Therefore, by (\ref{Eqn:u_0}) and (\ref{Eqn:Form17}), 
we obtain $u = Au_0$.
\end{proof}

By Proposition \ref{Prop:Prop2}, to prove that $\varphi_{\Omega_2}$
in (\ref{Eqn:Form15}) is standard, it suffices to show that $\tu$ in 
(\ref{Eqn:Form18}) satisfies Condition (H). As  
$(\text{pr} \circ \varphi)(1\otimes v^+)=\tu \otimes 1_{-(n-i-(1/2))\gl_i}$
is a highest weight vector with highest weight $-2\vep_i-(n-i-(1/2))\gl_i$,
one can easily see that $\tu$ satisfies the conditions (2) and (3) 
in Definition \ref{Def:Condition}. So we wish to show that 
$\tu$ is in $\gs(\Sym^2(\bar \fn))$. 
To do so we need several technical lemmas.

\begin{Lem}\label{Lem:Lem1}
No polynomial in $\Sym^r(\bar \fn)$ for $r \geq 3$ has weight $-2\vep_i$.
\end{Lem}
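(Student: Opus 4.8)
The plan is to prove a weight-count statement: no monomial in $\bar\fn$ of degree $r\geq 3$ can have weight $-2\vep_i$. First I would recall from Appendix \ref{chap:Data} (or rederive from the grading) that $\bar\fn = \fg(-1)\oplus\fz(\bar\fn) = \fg(-1)\oplus\fg(-2)$, so every root space $\fg_{-\gb}$ in $\bar\fn$ has $\gb\in\gD(\fg(1))\cup\gD(\fz(\fn))$, and in every such $\gb$ the simple root $\ga_\fq=\ga_i$ occurs with multiplicity exactly $1$ (for $\gb\in\gD(\fg(1))$) or exactly $2$ (for $\gb\in\gD(\fz(\fn))$). A monomial $X_{-\gb_1}\cdots X_{-\gb_r}$ in $\Sym^r(\bar\fn)$ has weight $-(\gb_1+\cdots+\gb_r)$, and the multiplicity of $\ga_i$ in $\gb_1+\cdots+\gb_r$ is the sum of the multiplicities of $\ga_i$ in each $\gb_t$, hence is at least $r$ since each summand contributes at least $1$.

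Next I would observe that the target weight $-2\vep_i$ has, in the standard realization, $\ga_i$-multiplicity equal to $2$: indeed $\vep_i = \sum_{j=i}^n\ga_j$ in type $B_n$, so $2\vep_i = 2\sum_{j=i}^n\ga_j$ and the coefficient of $\ga_i$ is $2$. (More intrinsically, $-2\vep_i$ is the lowest weight of the type-$2$ special constituent $V(2\mu)^*$, which sits inside $\fg(-1)\otimes\fg(-1)$, so its $\ga_i$-multiplicity is $2$.) Therefore, if some monomial of degree $r$ in $\bar\fn$ had weight $-2\vep_i$, we would get $r\leq 2$, contradicting $r\geq 3$. Since $\Sym^r(\bar\fn)$ is spanned by such monomials, no element of $\Sym^r(\bar\fn)$ for $r\geq 3$ can have weight $-2\vep_i$.

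The argument is essentially a one-line multiplicity count once the structural facts are in place, so there is no real obstacle; the only point requiring a touch of care is confirming the $\ga_i$-multiplicity of roots in $\gD(\fg(1))$ and $\gD(\fz(\fn))$, which is exactly the content of the $2$-grading $\fg=\fz(\bar\fn)\oplus\fg(-1)\oplus\fl\oplus\fg(1)\oplus\fz(\fn)$ recalled in Subsection \ref{SS:TwoStep}, together with the standard-realization description of the roots for $B_n(i)$ given in Appendix \ref{chap:Data} and already used repeatedly above (e.g. in the proof of Proposition \ref{Prop:Prop1}).

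\begin{proof}
Recall from Subsection \ref{SS:TwoStep} that the $2$-grading of $\fg$ determined by $\fq$ gives $\bar\fn = \fg(-1)\oplus\fz(\bar\fn)$, where $\fz(\bar\fn) = \fg(-2)$. Thus $\bar\fn$ is spanned by the root vectors $X_{-\gb}$ with $\gb\in\gD(\fg(1))\cup\gD(\fz(\fn))$. Since $\gD(\fg(j))$ consists precisely of the positive roots in which $\ga_i=\ga_\fq$ occurs with multiplicity $j$ (see the beginning of the proof of Proposition \ref{Prop:Prop1}), the multiplicity of $\ga_i$ in such a $\gb$ is $1$ if $\gb\in\gD(\fg(1))$ and $2$ if $\gb\in\gD(\fz(\fn))$; in either case it is at least $1$.

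Now $\Sym^r(\bar\fn)$ is spanned by monomials $X_{-\gb_1}\cdots X_{-\gb_r}$ with each $\gb_t\in\gD(\fg(1))\cup\gD(\fz(\fn))$, and such a monomial has weight $-(\gb_1+\cdots+\gb_r)$. By the previous paragraph the multiplicity of $\ga_i$ in $\gb_1+\cdots+\gb_r$ is at least $r$. On the other hand, in the standard realization of the roots of $B_n$ we have $\vep_i = \sum_{j=i}^n\ga_j$, so $2\vep_i = 2\sum_{j=i}^n\ga_j$ and the multiplicity of $\ga_i$ in $2\vep_i$ equals $2$. Hence a monomial of degree $r$ can have weight $-2\vep_i$ only if $r\leq 2$. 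Consequently, for $r\geq 3$ no monomial in $\Sym^r(\bar\fn)$, and therefore no element of $\Sym^r(\bar\fn)$, has weight $-2\vep_i$.
\end{proof}
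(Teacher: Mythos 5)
Your proof is correct and is essentially identical to the paper's: both arguments count the multiplicity of $\ga_i$ in the weight of a degree-$r$ monomial in $\bar\fn$ (at least $r$, since every root of $\fn$ contains $\ga_i$ with multiplicity at least $1$) and compare it with the multiplicity $2$ of $\ga_i$ in $2\vep_i = 2\sum_{j=i}^n\ga_j$. No differences worth noting.
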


\begin{proof}
Observe that the simple root $\ga_\fq = \ga_i$ has multiplicity $\geq 1$ in
any roots $\gb \in \gD(\fn)$. Therefore, 
in the weights for any polynomials in $\Sym^r(\bar \fn)$,
the simple root $\ga_i$ has multiplicity greater than or equal to $r$.
Since $\ga_i$ has multiplicity $2$ in $-2\vep_i = -2\sum_{j=i}\ga_j$, 
no polynomial in $\Sym^r(\bar \fn)$ for $r \geq 3$ has weight $-2\vep_i$.
\end{proof}

\begin{Cor}\label{Cor:Cor2}
Any non-zero polynomials in $\Sym^r(\bar \fu)$ with weight $-2\vep_i$ for $r\geq 3$ 
have contributions from root vectors $X_{-\ga}$ for $\ga \in \gD^+(\fl)$. 
\end{Cor}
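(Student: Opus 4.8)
The plan is to obtain this as an immediate consequence of Lemma \ref{Lem:Lem1}. First I would make precise what ``having a contribution from a root vector $X_{-\ga}$ with $\ga \in \gD^+(\fl)$'' means in terms of the grading of the symmetric algebra. Recall the decomposition $\gD^+ = \gD^+(\fl) \sqcup \gD(\fg(1)) \sqcup \gD(\fz(\fn))$, so that $\bar \fu = \bar \fn \oplus \fm$, where $\fm := \bigoplus_{\ga \in \gD^+(\fl)}\fg_{-\ga}$. Consequently, as graded vector spaces,
\begin{equation*}
\Sym^r(\bar \fu) \cong \bigoplus_{p+q = r}\Sym^p(\bar \fn)\otimes\Sym^q(\fm),
\end{equation*}
and a polynomial $u \in \Sym^r(\bar \fu)$ has no contribution from the root vectors $X_{-\ga}$ with $\ga \in \gD^+(\fl)$ precisely when it lies in the single summand with $q = 0$, namely $\Sym^r(\bar \fn)$.

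With that observation in hand, the argument is by contradiction. Suppose $u \in \Sym^r(\bar \fu)$ is non-zero, has weight $-2\vep_i$, and $r \geq 3$, but involves no root vector $X_{-\ga}$ with $\ga \in \gD^+(\fl)$. Then $u \in \Sym^r(\bar \fn)$ by the previous paragraph. But Lemma \ref{Lem:Lem1} asserts that no non-zero polynomial in $\Sym^r(\bar \fn)$ with $r \geq 3$ has weight $-2\vep_i$, contradicting the choice of $u$. Hence any non-zero $u \in \Sym^r(\bar \fu)$ of weight $-2\vep_i$ with $r \geq 3$ must involve at least one $X_{-\ga}$ with $\ga \in \gD^+(\fl)$, which is exactly the assertion.

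I do not expect any genuine obstacle here: all of the real content sits in Lemma \ref{Lem:Lem1}, whose proof is the multiplicity count comparing how often $\ga_i$ occurs in a root of $\fn$ (at least once, since $\ga_i = \ga_\fq$) against how often it occurs in $-2\vep_i = -2\sum_{j=i}^{n}\ga_j$ (exactly twice). The only point that needs a little care is the bookkeeping of the first paragraph, i.e.\ translating ``no contribution from $X_{-\ga}$, $\ga \in \gD^+(\fl)$'' into membership in $\Sym^r(\bar \fn)$; once that translation is set up, the corollary follows in one line.
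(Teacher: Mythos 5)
Your argument is correct and is exactly the paper's proof, merely spelled out: the paper also observes $\gD(\fu)=\gD^+(\fl)\cup\gD(\fn)$ and concludes immediately from Lemma \ref{Lem:Lem1}. Your explicit decomposition $\Sym^r(\bar\fu)\cong\bigoplus_{p+q=r}\Sym^p(\bar\fn)\otimes\Sym^q(\fm)$ is just the careful version of that one-line remark.
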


\begin{proof}
Since $\gD(\fu) = \gD^+(\fl)\cup \gD(\fn)$,
this is an immediate consequence of Lemma \ref{Lem:Lem1}. 
\end{proof}

\begin{Lem}\label{Lem:Lem4}
If $u \in \Cal{U}(\bar \fu)$ has weight $-2\vep_i$ then 
\begin{equation}\label{Eqn:Form19}
u= AX_{-\vep_i}^2  + \sum_{k=i+1}^nB_k X_{-(\vep_i+\vep_k)}X_{-(\vep_i-\vep_k)}
+ \sum_{\ga \in \gD^+(\fl)}u^\ga X_{-\ga}
\end{equation}
for some constants $A$ and $B_k$, and some elements $u^\ga \in \Cal{U}(\bar \fu)$. 
\end{Lem}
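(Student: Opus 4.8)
The plan is to expand an arbitrary weight vector $u \in \Cal{U}(\bar\fu)$ of weight $-2\vep_i$ in the PBW basis for $\Cal{U}(\bar\fu)$ and then separate those PBW monomials that involve no root vector $X_{-\ga}$ with $\ga\in\gD^+(\fl)$ from those that do. The former monomials must lie entirely in $\Cal{U}(\bar\fn)$, so their span consists of weight vectors in $\Cal{U}(\bar\fn)$ of weight $-2\vep_i$; the latter can be collected, after reordering factors to bring one $X_{-\ga}$ with $\ga\in\gD^+(\fl)$ to the right, into a sum $\sum_{\ga\in\gD^+(\fl)}u^\ga X_{-\ga}$ with $u^\ga\in\Cal{U}(\bar\fu)$. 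The point is then that the ``pure $\bar\fn$'' part is constrained by the earlier lemmas: since $\ga_i$ has multiplicity $\geq r$ in any weight of $\Sym^r(\bar\fn)$ (Lemma \ref{Lem:Lem1}) and multiplicity exactly $2$ in $-2\vep_i$, only $r=2$ survives, and then Lemma \ref{Lem:Lem} identifies that degree-$2$ piece as $AX_{-\vep_i}^2 + \sum_{k=i+1}^n B_k X_{-(\vep_i+\vep_k)}X_{-(\vep_i-\vep_k)}$.

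In more detail, first I would fix a PBW basis of $\Cal{U}(\bar\fu)$ consisting of ordered monomials in the root vectors $\{X_{-\ga} : \ga\in\gD^+\}$, with the convention that all factors $X_{-\ga}$ with $\ga\in\gD^+(\fl)$ appear to the \emph{right} of all factors $X_{-\gb}$ with $\gb\in\gD(\fn)$. Writing $u$ in this basis and grouping terms, $u = u' + u''$ where $u'$ is the sum of monomials involving only root vectors $X_{-\gb}$ with $\gb\in\gD(\fn)$ (hence $u'\in\Cal{U}(\bar\fn)$) and $u''$ is the sum of the remaining monomials, each of which ends (on the right) in at least one factor $X_{-\ga}$ with $\ga\in\gD^+(\fl)$. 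Factoring that rightmost factor out of each monomial in $u''$ gives $u'' = \sum_{\ga\in\gD^+(\fl)} u^\ga X_{-\ga}$ for suitable $u^\ga\in\Cal{U}(\bar\fu)$. Both $u'$ and $u''$ have weight $-2\vep_i$ (we may assume the $u^\ga X_{-\ga}$ are grouped so this holds, i.e.\ each $u^\ga$ has weight $-2\vep_i+\ga$). It remains to pin down $u'$.

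Next, $u'\in\Cal{U}(\bar\fn)$ has weight $-2\vep_i$, and $\Cal{U}(\bar\fn) = \bigoplus_{r\geq 0}\gs(\Sym^r(\bar\fn))$ as an $\fl$-module via the symmetrization map, so $u'$ decomposes as a sum of weight-$(-2\vep_i)$ vectors, one in each $\gs(\Sym^r(\bar\fn))$. By Lemma \ref{Lem:Lem1} the components with $r\geq 3$ vanish, and the $r\leq 1$ components vanish for the trivial multiplicity reason ($\ga_i$ occurs with multiplicity $2$ in $-2\vep_i$ but $\leq 1$ in any weight of $\Sym^{\leq 1}(\bar\fn)$). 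Hence $u'$ lies in $\gs(\Sym^2(\bar\fn))$ with weight $-2\vep_i$, and Lemma \ref{Lem:Lem} — which applies to $\Sym^2(\bar\fu)\supset\Sym^2(\bar\fn)$ — yields $u' = AX_{-\vep_i}^2 + \sum_{k=i+1}^n B_k X_{-(\vep_i+\vep_k)}X_{-(\vep_i-\vep_k)}$ for some constants $A,B_k$. Combining with $u'' = \sum_{\ga\in\gD^+(\fl)}u^\ga X_{-\ga}$ gives (\ref{Eqn:Form19}).

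The one point requiring a little care — the main obstacle, such as it is — is the bookkeeping when reordering PBW monomials to extract a right-hand factor from $\fl$: commuting an $X_{-\ga}$ ($\ga\in\gD^+(\fl)$) past the $\bar\fn$-factors produces lower-order correction terms via $[X_{-\gb},X_{-\ga}]$, but since $\gD(\fn)$ is an ideal condition ($[\bar\fn,\bar\fl]\subset\bar\fn$ need not hold, but $[\bar\fl,\bar\fn]\subset\bar\fn$ does because $\fn$, hence $\bar\fn$, is an $\fl$-module) every such bracket is again a root vector in $\bar\fn$, so the correction terms have \emph{fewer} factors from $\gD^+(\fl)$ and can be absorbed into the induction; one organizes the argument by induction on the number of $\gD^+(\fl)$-factors in a monomial, the base case (zero such factors) being exactly the $u'$ analysis above. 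This is entirely routine, so I would state it briefly and refer to the PBW theorem and the earlier lemmas rather than writing out the commutator bookkeeping.
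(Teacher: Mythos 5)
Your proposal is correct and is essentially the paper's own argument: both decompose $u$ via the PBW/symmetrization filtration of $\Cal{U}(\bar\fu)$, use the multiplicity of $\ga_i$ (Lemma \ref{Lem:Lem1} and Corollary \ref{Cor:Cor2}) to force the part not ending in an $X_{-\ga}$, $\ga\in\gD^+(\fl)$, into degree two, and then invoke Lemma \ref{Lem:Lem}; the paper merely organizes the bookkeeping as an induction on the filtration degree of $\Cal{U}(\bar\fu)$ rather than on the number of $\fl$-factors. One negligible slip: your stated reason for discarding the $r=1$ component (that $\ga_i$ has multiplicity $\leq 1$ in every weight of $\Sym^{\leq 1}(\bar\fn)$) is false for roots in $\gD(\fz(\fn))$, but the conclusion holds anyway since $2\vep_i$ is not a root.
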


\begin{proof}
If 
\begin{equation*}
\Cal{U}_r(\bar \fu) = \{ u \in \Cal{U}(\bar \fu) \; | \text{ $u$ has degree at most $r$}\}
\end{equation*}
then  $\Cal{U}(\bar \fu) = \bigcup_{r=1}^\infty\Cal{U}_r(\bar \fu)$ and
$\Cal{U}_{r+1}(\bar \fu)/\Cal{U}_r(\bar \fu) \cong \Sym^{r+1}(\bar \fu)$.
We show this lemma by induction on the degree $r$ for $\Cal{U}_r(\bar \fu)$.
First observe that since $-2\vep_i \notin \gD$, the element $u$ cannot be in 
$\Cal{U}_1(\bar \fu) = \C \oplus \bar \fu$. Thus if $u \in \Cal{U}_2(\bar \fu)$ then 
$u \in \Sym^2(\bar \fu) \cong \Cal{U}_2(\bar \fu)/\Cal{U}_1(\bar \fu)$. Thus, by 
Lemma \ref{Lem:Lem}, if $u \in \Cal{U}_2(\bar \fu)$ then
$u= AX_{-\vep_i}^2  + \sum_{k=i+1}^nB_k X_{-(\vep_i+\vep_k)}X_{-(\vep_i-\vep_k)}$
for some constants $A$ and $B_k$. Now assume that this lemma holds for 
$u \in \Cal{U}_r(\bar \fu)$ for $3 \leq r \leq t$, and suppose that 
$u \in \Cal{U}_{t+1}(\bar \fu)$. By Corollary \ref{Cor:Cor2}, any polynomials
in $\Cal{U}_{t+1}(\bar \fu)/\Cal{U}_t(\bar \fu) \cong \Sym^{t+1}(\bar \fu)$ with
weight $-2\vep_i$ have contributions from root vectors in $\fl$. By permuting
the root vectors, in $\Cal{U}_{t+1}(\bar\fu)$, those polynomials can be expressed as
\begin{equation*}
\text{(some polynomial in $\Cal{U}_t(\bar \fu)$)}
+ \sum_{\ga \in \gD^+(\fl)}v^\ga X_{-\ga}
\end{equation*}
with some $v^\ga \in \Cal{U}_t(\bar \fu)$. Therefore the element 
$u \in \Cal{U}_{t+1}(\bar \fu)$ is of the form
\begin{equation*}
u = p + \sum_{\ga \in \gD^+(\fl)}v^\ga X_{-\ga}
\end{equation*}
for some $p, v^\ga \in \Cal{U}_t(\bar \fu)$. By the induction hypothesis, 
the polynomial $p \in \Cal{U}_t(\bar \fu)$ can be then expressed as 
\begin{equation*}
p =AX_{-\vep_i}^2  + \sum_{k=i+1}^nB_k X_{-(\vep_i+\vep_k)}X_{-(\vep_i-\vep_k)}
+ \sum_{\ga \in \gD^+(\fl)}\check{u}^\ga X_{-\ga}
\end{equation*}
for some constants $A$ and $B_k$, and some elements 
$\check{u}^\ga \in \Cal{U}_{t-1}(\bar \fu)$. 
If $u^\ga = \check{u}^\ga + v^\ga$ then $u$ is of the form in 
(\ref{Eqn:Form19}). By induction, this lemma follows.
\end{proof}

\vsp

Now we are ready to show that the map $\varphi_{\Omega_2}$ in (\ref{Eqn:Form15}) 
is standard. Recall that if $1\otimes v^+$ 
is a highest weight vector for $M(-2\vep_i-(n-i-(1/2))\gl_i+\rho)$
with highest weight $-2\vep_i-(n-i-(1/2))\gl_i$ and 
if $\text{pr}:M(-(n-i-(1/2))\gl_i+\rho) \to M_\fq(-(n-i-(1/2))\gl_i+\rho)$ 
is the canonical projection map then $\varphi_{std}(1\otimes v_h) = 
(\text{pr}\circ \varphi)(1\otimes v^+)$, where $\varphi$ is an embedding
of $M(-2\vep_i-(n-i-(1/2))\gl_i+\rho)$ into $M(-(n-i-(1/2))\gl_i+\rho)$.
By Proposition \ref{Prop:Prop1}, we have
$(\text{pr}\circ \varphi)(1\otimes v^+) = \varphi_{std}(1\otimes v_h) \neq 0$.

\begin{Thm}\label{Thm:MapO2Bn(i)}
If $\fq$ is the maximal parabolic 
subalgebra of type $B_n(i)$ for $3 \leq i \leq n-1$ then
the map $\varphi_{\Omega_2}$ 
induced by the $\Omega_2|_{V(\mu+\geg)^*}$ system is standard.
\end{Thm}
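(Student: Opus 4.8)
The plan is to show that $\varphi_{\Omega_2}$ is a non-zero scalar multiple of the standard map $\varphi_{std}$. Since the source $M_\fq(-2\vep_i-(n-i-(1/2))\gl_i+\rho)=\Cal{U}(\fg)\otimes_{\Cal{U}(\fq)}F(\Omega_2|_{V(\mu+\geg)^*})$ is generated over $\Cal{U}(\fg)$ by the highest weight vector $1\otimes v_h$, it suffices to check that $\varphi_{\Omega_2}(1\otimes v_h)$ and $\varphi_{std}(1\otimes v_h)$ agree up to a non-zero scalar. Now $\varphi_{\Omega_2}(1\otimes v_h)=v_h=u_h\otimes 1_{-(n-i-(1/2))\gl_i}$, while by the commutative diagram preceding the theorem together with Proposition \ref{Prop:Prop1} we have $\varphi_{std}(1\otimes v_h)=(\text{pr}\circ\varphi)(1\otimes v^+)=\tu\otimes 1_{-(n-i-(1/2))\gl_i}$ with $\tu\in\Cal{U}(\bar\fn)\setminus\{0\}$. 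So the whole problem reduces to showing that $\tu$ is a non-zero scalar multiple of $u_h$, and by Proposition \ref{Prop:Prop2} this follows once we verify that $\tu$ satisfies Condition (H).

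Conditions (2) and (3) of Condition (H) are immediate: $(\text{pr}\circ\varphi)(1\otimes v^+)=\tu\otimes 1_{-(n-i-(1/2))\gl_i}$ is a highest weight vector of weight $-2\vep_i-(n-i-(1/2))\gl_i$, so it is annihilated by $X_\ga$ for every $\ga\in\Pi(\fl)$, and since such $X_\ga\in\fl$ acts trivially on $\C_{-(n-i-(1/2))\gl_i}$ this forces $\ad(X_\ga)(\tu)=0$, while the weight statement gives that $\tu$ has weight $-2\vep_i$. The substantive point is condition (1), that $\tu\in\gs(\Sym^2(\bar\fn))$. To see this I would write $\varphi(1\otimes v^+)=\hat u\otimes 1$ inside $M(-(n-i-(1/2))\gl_i+\rho)\cong\Cal{U}(\bar\fu)\otimes\C$, where $\hat u\in\Cal{U}(\bar\fu)$ has weight $-2\vep_i$, and apply Lemma \ref{Lem:Lem4} to get $\hat u=AX_{-\vep_i}^2+\sum_{k=i+1}^nB_kX_{-(\vep_i+\vep_k)}X_{-(\vep_i-\vep_k)}+\sum_{\ga\in\gD^+(\fl)}\hat u^\ga X_{-\ga}$. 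Applying the canonical projection $\text{pr}$ to $M_\fq(-(n-i-(1/2))\gl_i+\rho)$, every term $\hat u^\ga X_{-\ga}\otimes 1$ with $\ga\in\gD^+(\fl)$ maps to zero, because $X_{-\ga}\in\fl\subset\fq$ and $X_{-\ga}$ annihilates the highest weight vector of the one-dimensional module $\C_{-(n-i-(1/2))\gl_i}$; hence $\tu=AX_{-\vep_i}^2+\sum_{k=i+1}^nB_kX_{-(\vep_i+\vep_k)}X_{-(\vep_i-\vep_k)}$. Since $X_{-\vep_i}$ and $X_{-(\vep_i\pm\vep_k)}$ all lie in $\bar\fn$ and $[X_{-(\vep_i+\vep_k)},X_{-(\vep_i-\vep_k)}]=0$ (as $-2\vep_i$ is not a root in type $B_n$), this expression is symmetric, so $\tu\in\gs(\Sym^2(\bar\fn))$ and condition (1) holds.

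With Condition (H) established for $\tu$, Proposition \ref{Prop:Prop2} yields $\tu=cu_h$ for a constant $c$, which is non-zero because $\tu\neq 0$. Hence $\varphi_{std}(1\otimes v_h)=c\,v_h=c\,\varphi_{\Omega_2}(1\otimes v_h)$, and by the generation remark above this propagates to $\varphi_{std}=c\,\varphi_{\Omega_2}$ on all of $M_\fq(-2\vep_i-(n-i-(1/2))\gl_i+\rho)$, so $\varphi_{\Omega_2}=c^{-1}\varphi_{std}$ is standard. The one step demanding genuine care---apart from quoting the lemmas already set up---is the identification of $\tu$: one has to be sure that passing from the ordinary Verma module to the generalized Verma module kills precisely the $\gD^+(\fl)$-contributions produced by Lemma \ref{Lem:Lem4}, leaving a manifestly degree-two symmetric element. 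I expect this to be the main (if modest) obstacle, the remainder being bookkeeping against the uniqueness statement in Proposition \ref{Prop:Prop2}.
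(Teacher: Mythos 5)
Your proposal is correct and follows essentially the same route as the paper: reduce to comparing $\varphi_{\Omega_2}(1\otimes v_h)$ with $\varphi_{std}(1\otimes v_h)$, use Lemma \ref{Lem:Lem4} to write $\varphi(1\otimes v^+)$ modulo terms ending in $X_{-\ga}$ with $\ga\in\gD^+(\fl)$ (which the projection kills), verify Condition (H) for the resulting $\tu$, and invoke Propositions \ref{Prop:Prop1} and \ref{Prop:Prop2}. Your explicit check that $[X_{-(\vep_i+\vep_k)},X_{-(\vep_i-\vep_k)}]=0$ so that $\tu$ genuinely lies in $\gs(\Sym^2(\bar\fn))$ is a small point the paper passes over with ``clearly,'' but the argument is the same.
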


\begin{proof}
Observe that, as $M(-(n-i-(1/2))\gl_i+\rho) \cong 
\Cal{U}(\bar \fu) \otimes \C_{-(n-i-(1/2))\gl_i}$, 
the vector $\varphi(1\otimes v^+)$ is of the form
$\varphi(1\otimes v^+) = u' \otimes 1_{-(n-i-(1/2))\gl_i}$ 
for some $u' \in \Cal{U}(\bar \fu)$. Since $\varphi(1\otimes v^+)$ 
has weight $-2\vep_i-(n-i-(1/2))\gl_i$, the element $u'$ has weight 
$-2\vep_i$. Thus, by Lemma \ref{Lem:Lem4}, we have
\begin{equation*}
u' =AX_{-\vep_i}^2  + \sum_{k=i+1}^nB_k X_{-(\vep_i+\vep_k)}X_{-(\vep_i-\vep_k)}
+ \sum_{\ga \in \gD^+(\fl)}u^\ga X_{-\ga}
\end{equation*}
for some constants $A$ and $B_k$, and some elements $u^\ga \in \Cal{U}(\bar \fu)$.
As $X_{-\vep_i}$, $X_{-(\vep_i+\vep_k)}$, and $X_{-(\vep_i-\vep_k)}$ are 
not in $\fl$, $\varphi_{std}(1\otimes v_h)$ is given by
\begin{align*}
&\varphi_{std}(1\otimes v_h)
=(\text{pr}\circ \varphi)(1\otimes v^+)\\
&=\text{pr}\bigg(\big(
AX_{-\vep_i}^2  + \sum_{k=i+1}^nB_k X_{-(\vep_i+\vep_k)}X_{-(\vep_i-\vep_k)}
+ \sum_{\ga \in \gD^+(\fl)}u^\ga X_{-\ga}\big) \otimes 1_{-(n-i-(1/2))\gl_i} \big)\bigg)\\
&=\big(AX_{-\vep_i}^2  + \sum_{k=i+1}^nB_k X_{-(\vep_i+\vep_k)}X_{-(\vep_i-\vep_k)}\big)
\otimes 1_{-(n-i-(1/2))\gl_i}.
\end{align*}
Write
$\tu = AX_{-\vep_i}^2  + \sum_{k=i+1}^nB_k X_{-(\vep_i+\vep_k)}X_{-(\vep_i-\vep_k)}$.
Clearly $\tu$ satisfies Condition (H). 
Thus, by Proposition \ref{Prop:Prop2}, there exists a constant 
$c$ so that  $\tu=cu_h$ with $u_h$ in (\ref{Eqn:Form16}). 
By Proposition \ref{Prop:Prop1}, we have $\tu \neq 0$; thus $c\neq 0$.
Since $\varphi_{\Omega_2}(1\otimes v_h) = v_h =u_h \otimes 1_{-(n-i-(1/2))\gl_i}$,
we obtain
\begin{equation*}
\varphi_{\Omega_2}(1\otimes v_h) 
=u_h \otimes 1_{-(n-i-(1/2))\gl_i}
=(1/c)\varphi_{std}(1\otimes v_h).
\qedhere
\end{equation*}
\end{proof}

In Table \ref{THom} below we summarize 
the classification of the maps $\varphi_{\Omega_2}$.

\vskip 0.1in

\begin{table}[h]
\caption{The Homomorphism $\varphi_{\Omega_2}$ for the Non-Heisenberg Case}
\begin{center}
\begin{tabular}{c|c|c}
\hline
Parabolic subalgebra $\fq$ &$\Omega_2|_{V(\mu+\ge_{\gamma})^*}$ & 
$\Omega_2|_{V(\mu+\ge_{n\gamma})^*}$\\
\hline
$B_n(i), 3\leq i \leq n-2$ & standard & non-standard  \\
$B_n(n-1)$ & standard & $?$  \\
$B_n(n)$ & standard & $-$  \\
$C_n(i), 2 \leq i \leq n-1$  &$?$ & standard \\
$D_n(i), 3 \leq i \leq n-3$ & non-standard & non-standard\\
$E_6(3)$ & non-standard & non-standard  \\
$E_6(5)$ & non-standard & non-standard  \\
$E_7(2)$ & non-standard & $-$ \\
$E_7(6)$ & non-standard & non-standard  \\
$E_8(1)$ & non-standard & $-$ \\
$F_4(4)$ & standard & $-$ \\
\hline
\end{tabular}\label{THom}
\end{center}
\end{table}

\appendix

\section{Miscellenious Data }
\label{chap:Data}

In this appendix we recall from \cite{KuboThesis1} 
the miscellenious data for the maximal parabolic subalgebras 
$\fq=\fl \oplus \fg(1) \oplus \fz(\fn)$ of quasi-Heisenberg type shown in
(\ref{Eqn4.0.1}) and (\ref{Eqn4.0.2}) in Section \ref{SS:Prelim}.
For the definition of the deleted Dynkin diagram
see Subsection 4.1 of \cite{KuboThesis1}.

\vsp


\begin{center}
\S $\mathrm{B}_n(i)$, $3\leq i \leq n-2$ 
\end{center}

\begin{enumerate}

\item The deleted Dynkin diagram:
\begin{equation*}
\xymatrix{\belowwnode{\ga_1}\single[r]&
\belowwnode{\ga_2}\single[r]&\dots\single[r]
&\belowwnode{\ga_{i-1}}\single[r]&\belowcnode{\ga_i}\single[r]&
\belowwnode{\ga_{i+1}}\single[r]&\cdots \single[r]
&\belowwnode{\ga_{n-1}}\rdouble[r]&\belowwnode{\ga_n}}
\end{equation*}

\item The subgraph for $\flg$:
\begin{equation*}
\xymatrix{\belowwnode{\ga_1}\single[r]&
\belowwnode{\ga_2}\single[r]&
\belowwnode{\ga_3}\single[r]&\dots\single[r]
&\belowwnode{\ga_{i-1}}}
\end{equation*}

\item The subgraph for $\flng$:
\begin{equation*}
\xymatrix{
\belowwnode{\ga_{i+1}}\single[r]&\cdots \single[r]
&\belowwnode{\ga_{n-1}}\rdouble[r]&\belowwnode{\ga_n}}
\end{equation*}
\vspace{1pt}

\end{enumerate}

We have $\ga_\gamma = \ga_2$.
The highest weight $\mu$ and the set of roots $\gD(\fg(1))$ for 
$\fg(1)$ are $\mu = \vep_1+\vep_{i+1}$ and 
$\gD(\fg(1)) = \{\vep_j \pm \vep_k \; | \; 1 \leq j \leq i \text{ and } i+1 \leq k \leq n\}
 \cup \{\vep_j \; | \; 1\leq j \leq i \}$.
The highest weight $\gamma$ 
and the set of roots $\gD(\fz(\fn))$ for $\fz(\fn)$
are  $\gamma = \vep_1 + \vep_2$
and $\gD(\fz(\fn)) = \{ \vep_j + \vep_k \; | \; 1\leq j<k\leq i\}$.
The highest root $\xig$ and the set of positive roots 
$\gD^+(\flg)$ for $\flg$ are $\xig = \vep_1 - \vep_i$ and 
$\gD^+(\flg) = \{ \vep_j - \vep_k \; | \; 1\leq j < k\leq i\}$.
The highest root $\xing$ and the set of positive roots
$\gD^+(\flng)$ for $\flng$ are $\xing = \vep_{i+1} + \vep_{i+2}$
and $\gD^+(\flng) = \{ \vep_j \pm \vep_k \; | \; i+1 \leq j < k \leq n \}
\cup \{ \vep_j \; | \; i+1 \leq j \leq n\}$.

\vsp

\begin{center}
\S $\mathrm{B}_n(n-1)$
\end{center}

\begin{enumerate}

\item The deleted Dynkin diagram:
\begin{equation*}
\xymatrix{\belowwnode{\ga_1}\single[r]&
\belowwnode{\ga_2}\single[r]&\dots\single[r]
&\belowwnode{\ga_{n-2}}\single[r]
&\belowcnode{\ga_{n-1}}\rdouble[r]&\belowwnode{\ga_n}}
\end{equation*}

\item The subgraph for $\flg$:
\begin{equation*}
\xymatrix{\belowwnode{\ga_1}\single[r]&
\belowwnode{\ga_2}\single[r]&
\belowwnode{\ga_3}\single[r]&\dots\single[r]
&\belowwnode{\ga_{n-2}}}
\end{equation*}

\item The subgraph for $\flng$:
\begin{equation*}
\xymatrix{\belowwnode{\ga_n}}
\end{equation*}
\vspace{1pt}

\end{enumerate}

We have $\ga_\gamma = \ga_2$.
The highest weight $\mu$ and the set of weights $\gD(\fg(1))$ 
for $\fg(1)$ are $\mu = \vep_1+\vep_{n}$ and 
$\gD(\fg(1)) = \{\vep_j \pm \vep_n \; | \; 1 \leq j \leq n-1 \}
 \cup \{\vep_j \; | \; 1\leq j \leq n-1 \}$.
The highest weight $\gamma$ and the set of weights $\fg(\fz(\fn))$
for $\fz(\fn))$ are  
$\gamma = \vep_1 + \vep_2$ and 
$\gD(\fz(\fn)) = \{ \vep_j + \vep_k \; | \; 1\leq j<k\leq n-1\}$.
The highest root $\xig$ and the set of positive roots 
$\gD^+(\flg)$ for $\flg$ are 
$\xig = \vep_1 - \vep_{n-1}$ and 
$\gD^+(\flg) = \{ \vep_j - \vep_k \; | \; 1\leq j < k\leq n-1\}$.
The highest root $\xing$ and the set of positive roots 
$\gD^+(\flng)$ for $\flng$ are
$\xing = \vep_n$ and 
$\gD^+(\flng) = \{ \vep_n \}$.

\vsp


\begin{center}
\S $\mathrm{B}_n(n)$
\end{center}

\begin{enumerate}

\item The deleted Dynkin diagram:
\begin{equation*}
\xymatrix{\belowwnode{\ga_1}\single[r]&
\belowwnode{\ga_2}\single[r]&\dots\single[r]
&\belowwnode{\ga_{n-1}}\rdouble[r]&\belowcnode{\ga_n}}
\end{equation*}

\item The subgraph for $\flg$:
\begin{equation*}
\xymatrix{\belowwnode{\ga_1}\single[r]&
\belowwnode{\ga_2}\single[r]&
\belowwnode{\ga_3}\single[r]&\dots\single[r]
&\belowwnode{\ga_{n-1}}}
\end{equation*}

\item No subgraph for $\flng$ ($\flng = \{0 \}$)
\vspace{2pt}

\end{enumerate}

We have $\ga_\gamma = \ga_2$.
The highest weight $\mu$ and the set of weights
$\gD(\fg(1))$ are $\mu = \vep_1$ and 
$\gD(\fg(1)) =  \{\vep_j \; | \; 1\leq j \leq n \}$.
The highest weight $\gamma$ and the set of weights
$\gD(\fz(\fn))$ for $\fz(\fn)$ are 
$\gamma = \vep_1 + \vep_2$ and 
$\gD(\fz(\fn)) = \{ \vep_j + \vep_k \; | \; 1\leq j<k\leq n\}$.
The highest root $\xig$ and the set of positive roots 
for $\flg$ are 
$\xig = \vep_1 - \vep_n$ and 
$\gD^+(\flg) = \{ \vep_j - \vep_k \; | \; 1\leq j < k\leq n\}$.

\vsp


\begin{center}
\S $\mathrm{C}_n(i)$, $2\leq i \leq n-1$
\end{center}

\begin{enumerate}

\item The deleted Dynkin diagram:
\begin{equation*}
\xymatrix{\belowwnode{\ga_1}\single[r]&\dots\single[r]
&\belowwnode{\ga_{i-1}}\single[r]&\belowcnode{\ga_i}\single[r]&
\belowwnode{\ga_{i+1}}\single[r]&\cdots \single[r]
&\belowwnode{\ga_{n-1}}\ldouble[r]&\belowwnode{\ga_n}}
\end{equation*}

\item The subgraph for $\flg$:
\begin{equation*}
\xymatrix{\belowwnode{\ga_1}\single[r]&
\belowwnode{\ga_2}\single[r]&
\belowwnode{\ga_3}\single[r]&\dots\single[r]
&\belowwnode{\ga_{i-1}}}
\end{equation*}

\item The subgraph for $\flng$:
\begin{equation*}
\xymatrix{
\belowwnode{\ga_{i+1}}\single[r]&\cdots \single[r]
&\belowwnode{\ga_{n-1}}\ldouble[r]&\belowwnode{\ga_n}}
\end{equation*}
\vspace{1pt}

\end{enumerate}

We have $\ga_\gamma = \ga_1$.
The highest weight $\mu$ and the set of weights 
$\gD(\fg(1))$ for $\fg(1)$ are 
$\mu = \vep_1+\vep_{i+1}$ and 
$\gD(\fg(1)) = \{\vep_j \pm \vep_k \; | \; 1 \leq j \leq i \text{ and } i+1 \leq k \leq n\}$.
The highest weight $\gamma$ and the set of weights $\gD(\fz(\fn))$ 
for $\fz(\fn)$ are 
$\gamma = 2\vep_1$
$\gD(\fz(\fn)) = \{ \vep_j + \vep_k \; | \; 1\leq j<k\leq i\} 
\cup \{2\vep_j \; | \; 1 \leq j \leq i\}$.
The highest root $\xig$ and the set of positive roots 
$\gD^+(\flg)$ for $\flg$ are 
$\xig = \vep_1 - \vep_i$ and 
$\gD^+(\flg) = \{ \vep_j - \vep_k \; | \; 1\leq j < k\leq i\}$
The highest root $\xing$ and the set of positive roots $\gD(\flng)$
for $\flng$ are 
$\xing = 2\vep_{i+1}$ and 
$\gD^+(\flng) = \{ \vep_j \pm \vep_k \; | \; i+1 \leq j < k \leq n \}
\cup \{ 2\vep_j \; | \; i+1 \leq j \leq n\}$.

\vsp


\begin{center}
\S $\mathrm{D}_n(i)$, $3\leq i \leq n-3$ 
\end{center}

\begin{enumerate}

\item The deleted Dynkin diagram:
\begin{equation*}
\xymatrix{&&&&&&&&\abovewnode{\ga_{n-1}}\\
\belowwnode{\ga_1}\single[r]&
\belowwnode{\ga_2}\single[r]&\dots\single[r]&\belowwnode{\ga_{i-1}}\single[r]&
\belowcnode{\ga_i}\single[r]&\belowwnode{\ga_{i+1}}\single[r]&\cdots \single[r]&
\wnode\save []+<20pt,0pt>*\txt{$\ga_{n-2}$} \restore\single[ur]\single[dr]& \\
&&&&&&&&\belowwnode{\ga_n} }
\end{equation*}

\item The subgraph for $\flg$:
\begin{equation*}
\xymatrix{
\belowwnode{\ga_1}\single[r]&
\belowwnode{\ga_2}\single[r]&
\belowwnode{\ga_3}\single[r]&\dots\single[r]
&\belowwnode{\ga_{i-1}}}
\end{equation*}

\item The subgraph for $\flng$:
\begin{equation*}
\xymatrix{&&&\abovewnode{\ga_{n-1}}\\
\belowwnode{\ga_{i+1}}\single[r]&\cdots \single[r]&
\wnode\save []+<20pt,0pt>*\txt{$\ga_{n-2}$} \restore\single[ur]\single[dr]& \\
&&&\belowwnode{\ga_n} }
\end{equation*}
\vspace{1pt}

\end{enumerate}

We have $\ga_\gamma = \ga_2$.
The highest weight $\mu$ and the set of weights $\gD(\fg(1))$
for $\fg(1)$ are 
$\mu = \vep_1+\vep_{i+1}$ and 
$\gD(\fg(1)) = \{\vep_j \pm \vep_k \; | \; 1 \leq j \leq i \text{ and } i+1 \leq k \leq n\}$.
The highest weight $\gamma$ and the set of weights $\gD(\fz(\fn))$ 
for $\fz(\fn))$ are 
 $\gamma = \vep_1 + \vep_2$ and 
$\gD(\fz(\fn)) = \{ \vep_j + \vep_k \; | \; 1\leq j<k\leq i\}$.
The highest root $\xig$ and the set of positive roots $\gD^+(\flg)$
for $\flg$ are 
$\xig = \vep_1 - \vep_i$ and 
$\gD^+(\flg) = \{ \vep_j - \vep_k \; | \; 1\leq j < k\leq i\}$.
The highest root $\xing$ and the set of positive roots $\gD^+(\flng)$
for $\flng$ are 
$\xing = \vep_{i+1} + \vep_{i+2}$
$\gD^+(\flng) = \{ \vep_j \pm \vep_k \; | \; i+1 \leq j < k \leq n \}$.

\vsp


\begin{center}
\S $\mathrm{E}_6(3)$
\end{center}

\begin{enumerate}

\item The deleted Dynkin diagram:
\begin{equation*}
\xymatrix{
&&\abovewnode{\ga_2}\single[d]&&\\
\belowwnode{\ga_1}\single[r]&\belowcnode{\ga_3}\single[r]&\belowwnode{\ga_4}\single[r]
&\belowwnode{\ga_5}\single[r]&\belowwnode{\ga_6} }
\end{equation*}

\item The subgraph for $\flg$:
\begin{equation*}
\xymatrix{
\belowwnode{\ga_2}\single[r]&
\belowwnode{\ga_4}\single[r]&
\belowwnode{\ga_5}\single[r]&
\belowwnode{\ga_{6}}}
\end{equation*}

\item The subgraph for $\flng$:
\begin{equation*}
\xymatrix{
\belowwnode{\ga_{1}}}
\end{equation*}
\vspace{1pt}

\end{enumerate}

We have $\ga_\gamma = \ga_2$.
The highest weight $\mu$ for $\fg(1)$ is 
$\mu = \ga_1+ \ga_2 + \ga_3 + 2\ga_4 +2\ga_5 + \ga_6$.
The highest weight $\gamma$ for $\fz(\fn)$
is $\gamma = \ga_1 + 2\ga_2 + 2\ga_3 + 3\ga_4 + 2\ga_5 + \ga_6$.
The highest root $\xig$ for $\flg$ 
is $\xig = \ga_2 + \ga_4 + \ga_5 + \ga_6$.
The highest root $\xing$ for $\flng$
is $\xing = \ga_1$.

\vsp


\begin{center}
\S $\mathrm{E}_6(5)$
\end{center}

\begin{enumerate}

\item The deleted Dynkin diagram:
\begin{equation*}
\xymatrix{
&&\abovewnode{\ga_2}\single[d]&&\\
\belowwnode{\ga_1}\single[r]&\belowwnode{\ga_3}\single[r]&\belowwnode{\ga_4}\single[r]
&\belowcnode{\ga_5}\single[r]&\belowwnode{\ga_6} }
\end{equation*}

\item The subgraph for $\flg$:
\begin{equation*}
\xymatrix{
\belowwnode{\ga_1}\single[r]&
\belowwnode{\ga_3}\single[r]&
\belowwnode{\ga_4}\single[r]&
\belowwnode{\ga_2}}
\end{equation*}

\item The subgraph for $\flng$:
\begin{equation*}
\xymatrix{
\belowwnode{\ga_6}}
\end{equation*}
\vspace{1pt}

\end{enumerate}

We have $\ga_\gamma = \ga_2$.
The highest weight $\mu$ for $\fg(1)$ is 
$\mu = \ga_1+ \ga_2 + 2\ga_3 + 2\ga_4 +\ga_5 + \ga_6$.
The highest weight $\gamma$ for $\fz(\fn)$ is 
$\gamma = \ga_1 + 2\ga_2 + 2\ga_3 + 3\ga_4 + 2\ga_5 + \ga_6$.
The highest weight $\xig$ for $\flg$ is
$\xig = \ga_1 + \ga_2 + \ga_3 + \ga_4$.
The highest weight $\xing$ for $\flng$ is 
$\xing = \ga_6$.

\vsp


\begin{center}
\S $\mathrm{E}_7(2)$
\end{center}

\begin{enumerate}

\item The deleted Dynkin diagram:
\begin{equation*}
\xymatrix{
&&\abovecnode{\ga_2}\single[d]&&&\\
\belowwnode{\ga_1}\single[r]&\belowwnode{\ga_3}\single[r]&\belowwnode{\ga_4}\single[r]
&\belowwnode{\ga_5}\single[r]&\belowwnode{\ga_6}\single[r]&\belowwnode{\ga_7}}
\end{equation*}
\vspace{2pt}

\item The subgraph for $\flg$:
\begin{equation*}
\xymatrix{
\belowwnode{\ga_1}\single[r]&
\belowwnode{\ga_3}\single[r]&
\belowwnode{\ga_4}\single[r]&
\belowwnode{\ga_5}\single[r]&
\belowwnode{\ga_6}\single[r]&
\belowwnode{\ga_7}}
\end{equation*}

\item No subgraph for $\flng$ ($\flng = \{ 0 \}$)
\vspace{2pt}
\end{enumerate}

We have $\ga_\gamma = \ga_1$.
The highest weight $\mu$ for $\fg(1)$ is
$\mu = \ga_1+ \ga_2 + 2\ga_3 + 3\ga_4 +3\ga_5 + 2\ga_6 + \ga_7$.
The highest weight $\gamma$ for $\fz(\fn)$ is
$\gamma = 2\ga_1 + 2\ga_2 + 3\ga_3 + 4\ga_4 + 3\ga_5 + 2\ga_6 + \ga_7$.
The highest root $\xig$  for $\flg$ is 
$\xig = \ga_1 + \ga_3 + \ga_4 + \ga_5+ \ga_6 + \ga_7$.

\vsp


\begin{center}
\S $\mathrm{E}_7(6)$
\end{center}

\begin{enumerate}

\item The deleted Dynkin diagram:
\begin{equation*}
\xymatrix{
&&\abovewnode{\ga_2}\single[d]&&&\\
\belowwnode{\ga_1}\single[r]&\belowwnode{\ga_3}\single[r]&\belowwnode{\ga_4}\single[r]
&\belowwnode{\ga_5}\single[r]&\belowcnode{\ga_6}\single[r]&\belowwnode{\ga_7}}
\end{equation*}

\item The subgraph for $\flg$:
\begin{equation*}
\xymatrix{&&&\abovewnode{\ga_{2}}\\
\belowwnode{\ga_1}\single[r]&\belowwnode{\ga_{3}}\single[r]&
\wnode\save []+<20pt,0pt>*\txt{$\ga_{4}$} \restore\single[ur]\single[dr]& \\
&&&\belowwnode{\ga_5} }
\end{equation*}

\item The subgraph for $\flng$:
\begin{equation*}
\xymatrix{\belowwnode{\ga_7}}
\end{equation*}
\vspace{1pt}

\end{enumerate}

We have $\ga_\gamma = \ga_1$.
The highest weight $\mu$ for $\fg(1)$ is
$\mu = \ga_1+ 2\ga_2 + 2\ga_3 + 3\ga_4 +2\ga_5 + \ga_6 + \ga_7$.
The highest weight $\gamma$ for $\fz(\fn)$ is 
$\gamma = 2\ga_1 + 2\ga_2 + 3\ga_3 + 4\ga_4 + 3\ga_5 + 2\ga_6 + \ga_7$.
The highest root $\xig$ for $\flg$ is 
$\xig = \ga_1 + \ga_2 + 2\ga_3 + 2\ga_4 + \ga_5$.
The highest root $\xing$ for $\flng$ is 
$\xing = \ga_7$.

\vsp


\begin{center}
\S $\mathrm{E}_8(1)$
\end{center}

\begin{enumerate}

\item The deleted Dynkin diagram:
\begin{equation*}
\xymatrix{
&&\abovewnode{\ga_2}\single[d]&&&&\\
\belowcnode{\ga_1}\single[r]&\belowwnode{\ga_3}\single[r]&\belowwnode{\ga_4}\single[r]
&\belowwnode{\ga_5}\single[r]&\belowwnode{\ga_6}\single[r]&\belowwnode{\ga_7}\single[r]&
\belowwnode{\ga_8} \\}
\end{equation*}

\item The subgraph for $\flg$:
\begin{equation*}
\xymatrix{&&&&&\abovewnode{\ga_{2}}\\
\belowwnode{\ga_8}\single[r]&\belowwnode{\ga_{7}}\single[r]&
\belowwnode{\ga_{6}}\single[r]&\belowwnode{\ga_{5}}\single[r]&
\wnode\save []+<20pt,0pt>*\txt{$\ga_{4}$} \restore\single[ur]\single[dr]& \\
&&&&&\belowwnode{\ga_3} }
\end{equation*}

\item No subgraph for $\flng$ ($\flng = \{0\}$)
\vspace{2pt}

\end{enumerate}

We have $\ga_\gamma = \ga_8$.
The highest weight $\mu$ for $\fg(1)$ is 
$\mu = \ga_1+ 3\ga_2 + 3\ga_3 + 5\ga_4 +4\ga_5 + 3\ga_6 + 2\ga_7+\ga_8$.
The highest weight $\gamma$ for $\fz(\fn)$ is 
$\gamma = 2\ga_1 + 3\ga_2 + 4\ga_3 + 6\ga_4 + 5\ga_5 + 4\ga_6 + 3\ga_7+2\ga_8$.
The highest root $\xig$ for $\flg$ is 
$\xig = \ga_2+\ga_3+2\ga_4+2\ga_5+2\ga_6+2\ga_7+\ga_8$.

\vsp


\begin{center}
\S $\mathrm{F}_4(4)$
\end{center}

\begin{enumerate}

\item The deleted Dynkin diagram:
\begin{equation*}
\xymatrix{
\belowwnode{\ga_1}\single[r]&\belowwnode{\ga_2}\rdouble[r]&
\belowwnode{\ga_3}\single[r]&\belowcnode{\ga_4}}
\end{equation*}

\item The subgraph for $\flg$:
\begin{equation*}
\xymatrix{
\belowwnode{\ga_1}\single[r]&\belowwnode{\ga_2}\rdouble[r]&
\belowwnode{\ga_3}}
\end{equation*}

\item No subgraph for $\flng$ ($\flng = \{0\}$)
\vspace{2pt}

\end{enumerate}

We have $\ga_\gamma = \ga_1$.
The highest weight $\mu$ for $\fg(1)$ is 
$\mu = \ga_1+ 2\ga_2 + 3\ga_3 + \ga_4$.
The highest weight $\gamma$ for $\fz(\fn)$ is 
$\gamma = 2\ga_1 + 3\ga_2 + 4\ga_3 + 2\ga_4$.
The highest root for $\xig$ for $\flg$ is
$\xig = \ga_1 + 2\ga_2 + 2\ga_3$.

\bibliography{main}
\bibliographystyle{amsplain}

\end{document}